\newcommand{\PP}{\mathbb{P}}
\newcommand{\OO}{\mathcal{O}}
\newcommand{\aA}{\mathcal{A}}
\newcommand{\Hom}{\textsf{Hom}}
\newcommand{\End}{\operatorname{End}}
\newcommand{\Mor}{\operatorname{Mor}}
\newcommand{\Ext}{\operatorname{Ext}}
\newcommand{\M}{\operatorname{M}_e}
\newcommand{\GL}{\operatorname{GL}}
\theoremstyle{plain}
\newtheorem{lemma}{Lemma}[section]
\newtheorem*{theorem*}{Theorem}
\newtheorem*{lemma*}{Lemma}
\newtheorem*{proposition*}{Proposition}
\newtheorem*{conjecture*}{Conjecture}
\newtheorem*{corollary*}{Corollary}
\newtheorem*{problem*}{Problem}
\newtheorem{theorem}[lemma]{Theorem}
\newtheorem{corollary}[lemma]{Corollary}
\newtheorem{proposition}[lemma]{Proposition}
\newtheorem{question}[lemma]{Question}
\theoremstyle{definition}
\newtheorem{definition}[lemma]{Definition}
\newtheorem{remark}[lemma]{Remark}
\begin{document}

\title{Normal bundles of rational curves in projective space}
\author[I. Coskun]{Izzet Coskun}
\address{Department of Mathematics, Statistics and CS \\University of Illinois at Chicago, Chicago, IL 60607}
\email{coskun@math.uic.edu}

\author[E. Riedl]{Eric Riedl}
\email{ebriedl@uic.edu}

\subjclass[2010]{Primary: 14H60, 14C99. Secondary: 14C05, 14H45, 14N05, }
\keywords{Rational curves, normal bundles, restricted tangent bundles}
\thanks{During the preparation of this article the first  author was partially supported by the NSF CAREER grant DMS-0950951535 and the NSF grant DMS 1500031; and the second author was partially supported by an NSF RTG grant DMS-1246844.}

\begin{abstract}
Let $b_{\bullet}$ be a sequence of integers $1 < b_1 \leq b_2 \leq \dots \leq b_{n-1}$. Let  $\M(b_{\bullet})$ be the space parameterizing nondegenerate, immersed, rational curves of degree $e$ in $\PP^n$ such that the normal bundle has the splitting type $\bigoplus_{i=1}^{n-1}\OO(e+b_i)$. When $n=3$, celebrated results of Eisenbud, Van de Ven, Ghione and Sacchiero show that $\M(b_{\bullet})$ is irreducible of the expected dimension. We show that when $n \geq 5$, these loci are generally reducible with components of higher than the expected dimension. We give examples where the number of components grows linearly with $n$. These generalize an example of Alzati and Re.
\end{abstract}

\maketitle

\section{Introduction}
Rational curves play a central role in the birational and arithmetic geometry of projective varieties. Consequently, understanding the geometry of the space of rational curves is of fundamental importance. The local structure of this space is governed by the normal bundle. In this paper, we study the dimensions and irreducible components of the loci in the space of rational curves in $\PP^n$ parameterizing curves whose normal bundles have a specified splitting type. We work over an algebraically closed field of characteristic zero.

We first set some notation. Let $f: \PP^1 \rightarrow \PP^n$ be a nondegenerate, unramified, birational map of degree $e$.  Then the normal bundle $N_f$ defined by $$0 \longrightarrow T_{\PP^1}\stackrel{\text{d} f}{ \longrightarrow} f^* T_{\PP^n} \longrightarrow N_f \longrightarrow 0$$ is a vector bundle of rank $n-1$ and degree $e(n+1) -2$.  By Grothendieck's theorem, $N_f$ is isomorphic to a direct sum of line bundles. Let $\Mor_e(\PP^1, \PP^n)$ denote the morphism scheme parameterizing degree $e$ morphisms $f: \PP^1 \rightarrow \PP^n$. Let $b_{\bullet}$ denote an increasing  sequence of integers $$1 < b_1 \leq b_2 \leq \dots \leq b_{n-1}$$ such that $\sum_{i=1}^{n-1} b_i  = 2e-2$. Let $\M (b_{\bullet})$ denote the locally closed locus in $\Mor_e(\PP^1, \PP^n)$ parameterizing nondegenerate, unramified morphisms of degree $e$  such that $$N_f \cong \bigoplus_{i=1}^{n-1} \OO_{\PP^1} (e+b_i).$$  The scheme $\Mor_e(\PP^1, \PP^n)$ is irreducible of dimension $(n+1)(e+1) -1$. The codimension of the locus of vector bundles $E$ on $\PP^1$ with a specified splitting type in the versal deformation space is  given by $h^1(\PP^1, \End(E))$ \cite[Lemma 2.4]{Coskun}. In analogy, we say that the expected codimension of $\M (b_{\bullet})$ is $h^1(\PP^1, \End(N_f))$. Equivalently, the expected dimension is $$(e+1)(n+1)-1- h^1(\PP^1, \End(N_f)).$$ In this paper, we systematically construct examples where $\M (b_{\bullet})$ has many components, some of larger than expected dimension. 

The study of the geometry of $\M(b_{\bullet})$ has a long history.  Celebrated results of Eisenbud, Van de Ven \cite{EisenbudVandeVen}, \cite{EisenbudVandeVen2}, Ghione and Sacchiero \cite{GhioneSacchiero}, \cite{Sacchiero2}, \cite{Sacchiero} characterize the possible splitting types of the normal bundles of rational curves in $\PP^3$ and show that the locus of rational space curves whose normal bundles have a specified splitting type is irreducible of the expected dimension. Similarly, results of Ramella \cite{Ramella}, \cite{Ramella2} show that the locus of nondegenerate rational curves with a specified splitting type for $f^* T_{\PP^n}$ is irreducible of codimension $h^1(\PP^1, \End(f^* T_{\PP^n}))$ for all $n \geq 3$. The behavior of $\M(b_{\bullet})$ for $n \geq 5$ is in stark contrast to these results. 

Recently, Alzati and Re  \cite{AlzatiRe1} showed that the locus of rational curves of degree $11$ in $\PP^8$ whose normal bundles have the splitting type $\OO(13)^3 \oplus \OO(14)^2 \oplus \OO(15)^2$ is reducible. This was the first indication that the geometry of $\M(b_{\bullet})$ is much more complicated for large $n$. This paper grew out of our attempt to generalize their example. We produce examples of reducible $\M(b_{\bullet})$ in $\PP^5$ with $e < 11$, we find $\M(b_{\bullet})$ with arbitrarily many components, and show that the difference between the expected dimension and actual dimension of a component of $\M(b_{\bullet})$ can grow arbitrarily large.

We now summarize our results in greater detail.  First, following Sacchiero \cite{Sacchiero2}, we explain that $\M(b_{\bullet})$ is nonempty provided that $b_1 \geq 2$ and $e\geq n$ (see Theorem  \ref{thm-Sacchiero}). This already shows that the loci $\M(b_{\bullet})$ in general do not have the expected dimension (see Proposition \ref{prop-wrongdim}). 

Before stating the rest of the results, we need some notation.  Let $d$ and $k$ be positive integers and let $n$ be an integer such that $n \geq k+1$. Assume $2e \geq (n-1)d + n-k+1$. Observe, then, that $2e-2 \geq dk$. Let $q$ and $r$ be the quotient and remainder in $$2e-2-dk = q(n-1-k) + r.$$ Let $b_{\bullet}(d^k)$ denote the sequence $$b_1 = \dots = b_k = d, \quad b_{k+1} = \dots = b_{n-r-1} = q, \quad b_{n-r} = \dots = b_{n-1} =q+1.$$ Miret \cite{Miret} has shown that the locus $\M(b_{\bullet}(d))$ is irreducible of the expected dimension. In contrast,  we show the following.

\begin{theorem*}[Theorem \ref{higherDegreeRelations}]
Let  $k\geq 2$ be an even integer. Let $n \geq 3k-1$ and assume that $e$ is sufficiently large. Then $\M(b_{\bullet}(d^k))$ has at least $\frac{k}{2}+1$ irreducible components.
\end{theorem*}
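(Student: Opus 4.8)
The plan is to produce, for each integer $j$ with $0 \le j \le \frac{k}{2}$, an irreducible locally closed subvariety $\Sigma_j \subseteq \M(b_{\bullet}(d^k))$, to compute $\dim \Sigma_j$, and to show that the closures $\overline{\Sigma_j}$ are pairwise distinct irreducible components of $\overline{\M(b_{\bullet}(d^k))}$; this gives at least $\frac{k}{2}+1$ components. The guiding principle, following the Alzati--Re example and the $k=1$ case treated by Miret, is that a curve can acquire the $k$ most negative summands $\OO_{\PP^1}(e+d)$ of its normal bundle for several genuinely different geometric reasons, and since these reasons come in pairs we take $k$ even and obtain $\frac{k}{2}+1$ of them.

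\textbf{The families.} I would let $\Sigma_j$ parameterize nondegenerate rational curves of degree $e$ lying on a rational normal scroll (possibly a cone) $X_j \subseteq \PP^n$ of a type depending on $j$, chosen so that $2j$ of the $\OO_{\PP^1}(e+d)$ summands of $N_f$ are forced by the way $f(\PP^1)$ sits on $X_j$ while the remaining $k-2j$ arise from the embedding in $\PP^n$ in the manner of a generic, Sacchiero-type curve (Theorem~\ref{thm-Sacchiero}). Since $n \ge 3k-1$ there is room in $\PP^n$ to realize all $\frac{k}{2}+1$ of these configurations, and since $e$ is sufficiently large the balanced tail $b_{k+1}=\dots=b_{n-1}\in\{q,q+1\}$ has all entries strictly larger than $d$; thus the $k$ copies of $\OO_{\PP^1}(e+d)$ really are the most negative summands of $N_f$, detected by the geometry of $X_j$ rather than by a numerical coincidence in the tail.

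\textbf{Pinning down the splitting type.} For a general $f\in\Sigma_j$ I would sandwich $N_f$ between two bounds. For the lower bound, the geometry of $X_j$ — the secant and osculating behavior of $f(\PP^1)$ along it, via the sequence $0 \to N' \to N_f \to N'' \to 0$ attached to $f(\PP^1)\subseteq X_j\subseteq\PP^n$ — forces a sub- or quotient bundle isomorphic to $\OO_{\PP^1}(e+d)^{\oplus k}$, so $N_f$ is at least as unbalanced as $b_{\bullet}(d^k)$. For the upper bound, I would produce one explicit curve in $\Sigma_j$ with $N_f\cong\bigoplus_i\OO_{\PP^1}(e+b_i(d^k))$ — most cleanly by degenerating to a curve assembled from a rational normal curve, whose restricted tangent and normal bundles are as balanced as Grothendieck's theorem permits — and then use upper semicontinuity of the splitting type, equivalently of $h^1(\PP^1,\End N_f)$, to see that the general member of the irreducible family $\Sigma_j$ is no more unbalanced than this. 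The bounds meet, so $\Sigma_j\subseteq\M(b_{\bullet}(d^k))$.

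\textbf{Dimensions, distinctness, and the main obstacle.} Using $\dim\Mor_e(\PP^1,\PP^n)=(n+1)(e+1)-1$ and the incidence correspondence $\{(f,X):f(\PP^1)\subseteq X,\ X\text{ of type }X_j\}$, I would compute $\dim\Sigma_j$ as the dimension of the family of such $X$ plus the dimension of degree $e$ curves on a fixed one, verifying that these values are pairwise distinct for $j=0,\dots,\frac{k}{2}$; to exclude any containment $\overline{\Sigma_i}\subsetneq\overline{\Sigma_j}$ and to see that each $\overline{\Sigma_j}$ is maximal in $\overline{\M(b_{\bullet}(d^k))}$ I would use the deformation invariant given by the type of the minimal scroll through a general member. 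The main obstacle is the normal bundle analysis: forcing \emph{exactly} $k$ copies of $\OO_{\PP^1}(e+d)$ (and no more) out of the geometry of each $X_j$, exhibiting in every $\Sigma_j$ a curve realizing the full type $b_{\bullet}(d^k)$ so that semicontinuity closes the gap, and keeping the balanced tail under control — this last being exactly where $e$ sufficiently large is needed, while $n\ge 3k-1$ is what leaves room for the $\frac{k}{2}+1$ inequivalent configurations and keeps their dimensions apart.
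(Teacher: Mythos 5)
Your proposal is a strategy outline rather than a proof, and the two steps that carry all the weight are missing. First, the families $\Sigma_j$ are never actually constructed: the scrolls $X_j$ are not specified, and no argument is given that lying on such an $X_j$ forces $k$ summands $\OO(e+d)$ in $N_f$. The ``sandwich'' is also logically incomplete: having a subbundle or quotient of $N_f$ isomorphic to $\OO(e+d)^{\oplus k}$ via $0 \to N' \to N_f \to N'' \to 0$ does not pin down the splitting type (the sequence need not split, and a sub- or quotient bundle of that form is compatible with more balanced splittings), and semicontinuity from one special member only bounds how \emph{unbalanced} the general member can be, so the ``lower bound'' direction is unjustified. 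In the paper the mechanism producing the $\OO(e+d)$ summands is not a surface or scroll through the curve but degree-$d$ syzygies of $\partial f$, i.e.\ degree-$d$ rational curves in $\PP^{n*}$ whose hyperplanes are tangent to $f$; the $\tfrac{k}{2}+1$ components are indexed by how many consecutive pairs of these syzygies overlap (a higher-degree analogue of the parameterized tangency condition), and nonemptiness with the exact splitting type $b_{\bullet}(d^k)$ is obtained from explicit Sacchiero-type monomial constructions (Corollary \ref{cor-examples}), not from curves on scrolls. It is not clear your scroll configurations exist, realize exactly this splitting type, or correspond to the actual components.

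Second, even granting the $\Sigma_j$, your distinctness argument fails as stated. Pairwise distinct dimensions do not prevent all the $\overline{\Sigma_j}$ from sitting inside the closure of a single larger component of $\M(b_{\bullet}(d^k))$ whose general member lies on no scroll at all, and ``the type of the minimal scroll through a general member'' is not a deformation invariant you can invoke without proof (such types jump in families). What is needed, and what the paper supplies, is a dimension estimate valid for \emph{any} irreducible component meeting the special loci: the paper works in the incidence correspondence of pairs (curve, $k$ syzygies), computes the exact fiber dimension of $\pi_1$ over the special configurations $B_j$ (namely $(e+1)(n+1)-1-k(2e+d+2)+j(e+d)$), and shows that a component containing two different $B_{j_1}, B_{j_2}$ would have dimension too small to contain $\pi_1^{-1}(B_{j_2})$ once $e \geq k(d+1)(n+1)$; distinct components upstairs then give distinct components of $\M(b_{\bullet}(d^k))$ because $\pi_2$ is generically a $\GL(k)$-bundle (choice of basis of the $k$-dimensional space of degree-$d$ syzygies). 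Without an analogue of this global fiber-dimension control, your argument cannot rule out containments among the $\overline{\Sigma_j}$ or in an unrelated bigger component, so the proposal has a genuine gap at precisely the point the theorem requires.
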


When $d=2$, we  obtain sharper bounds. We classify the components of $\M(b_{\bullet}(2^2))$ in detail. We find that it has two components, one of the expected dimension and the other of larger than expected dimension provided $e$ is sufficiently large (see Theorem \ref{twoConicClassification}). More generally, we study $\M(b_{\bullet}(2^k))$ in greater  detail.

\begin{theorem*}[Theorem \ref{thm-moreconics}]
Let $3k-1 \leq n$, and $e > 2kn-2n-2$.  Then $\M(b_{\bullet}(2^k))$ has at least $k$ components.
\end{theorem*}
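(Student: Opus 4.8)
The plan is to produce $k$ pairwise distinct irreducible components of $\M(b_\bullet(2^k))$ by constructing, for each $j$ with $1 \le j \le k$, an irreducible family $\Sigma_j \subseteq \M(b_\bullet(2^k))$ in which exactly $j$ of the $k$ minimal summands $\OO(e+2)$ of the normal bundle are produced from a rigid geometric source. The starting point is that $\OO(e+2)$ is the smallest summand allowed (all $b_i \ge 2$), and one such summand can be forced by degenerating the curve so that it acquires a genuine conic component: a smooth conic spanning a plane in $\PP^n$ has normal bundle $\OO(4) \oplus \OO(2)^{n-2}$. So for each $j$ I take a genus-$0$ nodal curve $X = \Gamma \cup C_1 \cup \cdots \cup C_j$, where $C_1,\dots,C_j$ are conics and $\Gamma$ is a rational curve of degree $e-2j$, attached along a chain in a position chosen so that the configuration can be smoothed to a curve of splitting type exactly $b_\bullet(2^k)$; I let $\Sigma_j$ be the resulting family. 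The hypothesis $n \ge 3k-1$ is exactly what lets $k$ conics in general position span their own $\PP^{3k-1}$ inside $\PP^n$, and the bound $e > 2kn-2n-2$ guarantees that the balanced part of $b_\bullet(2^k)$ sits at level $q \ge 3$, strictly above the $k$ copies of $\OO(e+2)$, so that those $k$ summands are unambiguously the minimal ones and the invariant below is well defined.

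The argument then has three steps. First, show $\Sigma_j \subseteq \M(b_\bullet(2^k))$: compute $N_X$ component by component, inserting the positive elementary modifications at the nodes in the standard way, check that each conic piece forces one summand $\OO(e+2)$ while a general choice of the complementary data (using that the general rational curve of large degree has as balanced a normal bundle and restricted tangent bundle as possible, in the spirit of Ramella's irreducibility results) leaves the remaining $n-1-k$ summands balanced, verify that $N_X$ is at least as special as $b_\bullet(2^k)$ in the specialization order, and deduce that the smoothings of $X$ of splitting type exactly $b_\bullet(2^k)$ form a nonempty irreducible locally closed family $\Sigma_j$; largeness of $e$ is what provides room for the balanced part and for the smoothing to exist. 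Second, compute $\dim \Sigma_j$ by a parameter count over the ingredients of the construction — the $j$ conics, the curve $\Gamma$, the attaching and smoothing data, minus reparametrizations, and crucially restricting to deformations preserving the splitting type — obtaining a formula strictly monotonic in $j$, so that the $\Sigma_j$ have pairwise distinct dimensions. Third, conclude: if each $\overline{\Sigma_j}$ is an irreducible component of $\M(b_\bullet(2^k))$, then the $k$ distinct values of $\dim \Sigma_j$ force $k$ distinct components.

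The main obstacle is the assertion in the third step that each $\overline{\Sigma_j}$ is a \emph{full} irreducible component — equivalently, that a general member of $\Sigma_j$ does not deform within $\M(b_\bullet(2^k))$ to a curve lying in some $\Sigma_i$ with $i \ne j$. I would establish this by exhibiting a constructible, lower-semicontinuous invariant $\iota$ on $\M(b_\bullet(2^k))$ that equals $j$ on a dense open of $\Sigma_j$ — for instance the number of conic components appearing in a limiting stable map, or equivalently a count extracted from the way $N_f(-e-2)$ fails to be globally generated — and checking that no degeneration of curves in $\Sigma_i$ attains $\iota = j$ for $j \ne i$. Should direct control of $\iota$ prove too delicate, the fallback is an infinitesimal computation at a general $[f] \in \Sigma_j$: identify $T_{[f]}\M(b_\bullet(2^k))$ with the space of global sections of $N_f$ preserving the splitting type and show its dimension equals $\dim \Sigma_j$, exhibiting $\Sigma_j$ as a generically smooth component. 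I expect this separation argument — rather than the normal-bundle bookkeeping and the (somewhat subtle, since it is not a count of all smoothings) dimension computation — to be the crux; once each $\overline{\Sigma_j}$ is known to be a component, monotonicity of the dimensions finishes the proof.
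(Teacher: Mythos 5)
There is a genuine gap, and it sits exactly where you predict the crux to be, but it is worse than a delicate separation step: the construction itself does not produce points of $\M(b_{\bullet}(2^k))$. Splitting types of normal bundles behave upper semicontinuously, so when you smooth the nodal curve $X=\Gamma\cup C_1\cup\cdots\cup C_j$ the normal bundle of a general smoothing tends to become \emph{more} balanced, not to remember $j$ summands $\OO(e+2)$; the locus of smoothings whose splitting type is exactly $b_{\bullet}(2^k)$ is precisely the kind of stratum whose nonemptiness, irreducibility and dimension are the problem being studied, so "restricting to deformations preserving the splitting type" makes your dimension count circular rather than a parameter count. Moreover, with only $j\le k$ conic tails you never say where the other $k-j$ summands $\OO(e+2)$ come from: a general choice of the complementary data gives balanced summands, so as described your family would at best be related to $b_{\bullet}(2^j)$, not $b_{\bullet}(2^k)$. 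The separating invariant $\iota$ (number of conic components of "a limiting stable map") is not shown to be well defined, constructible, or semicontinuous on $\M(b_{\bullet}(2^k))$ — limits depend on the chosen one-parameter degeneration — and the strict monotonicity of $\dim\Sigma_j$ in $j$ is asserted, not derived. The fallback identification of $T_{[f]}\M(b_{\bullet}(2^k))$ with "sections of $N_f$ preserving the splitting type" is also not justified; describing tangent spaces to these strata is nontrivial precisely because the strata are badly behaved.

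For contrast, the paper never degenerates the curves at all: every curve in every component it exhibits is an irreducible, nondegenerate, immersed rational curve, and the $k$ components are distinguished not by geometric conic components of the curve but by the configuration of the $k$ degree-two syzygies of $\partial f$, viewed as conics in $\PP^{n*}$. The $j$-th component corresponds to a chain of $j-1$ consecutive pairs of these conic relations satisfying the parameterized tangency condition. Separation is proved on the incidence correspondence of tuples $(C_1,\dots,C_k,f)$: the fiber dimension of the projection to the space of conic tuples is computed exactly at explicit special tuples $B_j^k$ and bounded below over the tangency loci, and the hypothesis $e>2kn-2n-2$ enters exactly to force a dimension contradiction if two of these loci lay in one component; nonemptiness inside $\M(b_{\bullet}(2^k))$ is then supplied by explicit Sacchiero-type monomial constructions (Corollary \ref{cor-examples}), and the $\GL(k)$-bundle structure of the second projection transfers the count of components to $\M(b_{\bullet}(2^k))$. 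Any repair of your approach would have to recover this kind of control over the syzygies, since semicontinuity of the splitting type works against the smoothing construction you propose.
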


As a source of examples, we determine the splitting type of the normal bundle to immersed monomial rational curves  (see Theorem \ref{monomialSplitting}). There has been recent interest in computing these normal bundles (see \cite{AlzatiRe2}). Our methods allow us to compute these normal bundles easily.

\subsection*{Organization of the paper} In \S \ref{sec-prelim}, we collect basic facts concerning normal bundles of rational curves and summarize results of Sacchiero, Ramella and Miret on the stratification of the space of rational curves according to the splitting types of the normal or restricted tangent bundles. In \S \ref{sec-monomial}, we discuss the normal bundles of rational curves defined by monomials. In \S \ref{sec-conics}, we study the spaces $\M(b_{\bullet}(2^k))$ and show that the number of their components grows linearly with $k$ provided $e$ and $n$ are sufficiently large. We also show that if $n\geq 5$ and $e$ is sufficiently large, $\M(b_{\bullet}(2^2))$ has two irreducible components and describe the components. In \S \ref{sec-higherdegree}, we study loci $\M(b_{\bullet}(d^k))$ for $d>2$. Finally, in \S \ref{sec-examples}, we give some examples.

\subsection*{Acknowledgments} We are grateful to A. Alzati, L. Ein, J. Harris, R. Re and J. Starr for invaluable mathematical discussions and correspondence on normal bundles of rational curves. We thank the referee for many helpful suggestions.

\section{Preliminaries}\label{sec-prelim}
In this section, we  recall basic facts concerning the geometry of the space of rational curves in $\PP^n$. We also review Ramella's results on the splitting of $f^* T_{\PP^n}$  \cite{Ramella}, \cite{Ramella2},  Sacchiero's results showing that all possible splittings for the normal bundle occur \cite{Sacchiero2} and Miret's result \cite{Miret} on the irreducibility of $\M(b_{\bullet}(d))$.

\subsection{Basic facts}
Let $E$ be a vector bundle of rank $r$ on $\PP^1$. By Grothendieck's theorem, every vector bundle on $\PP^1$ is a direct sum of line bundles. Hence, there are uniquely determined integers $a_1 \leq a_2  \leq \dots \leq a_r$ such that $E \cong \bigoplus_{i=1}^r \OO(a_i)$. These integers are called the {\em splitting type} of $E$. The vector bundle is called {\em balanced} if  $a_j - a_i \leq 1$ for $1 \leq i < j \leq r$. The dimension of automorphisms of $E$ is given by $h^0(E \otimes E^*)$. Let $V$ be a balanced vector bundle of the same degree and rank as $E$. In particular, $h^0(V \otimes V^*) = \chi(V \otimes V^*)$. Hence, by Lemma \cite[Lemma 2.4]{Coskun}, the {\em expected codimension of a splitting type} is equal to $h^0(E\otimes E^*) - h^0(V \otimes V^*)$ and, by Riemann-Roch, is given by $$h^1(\End(E))= h^1(E^* \otimes E) = \sum_{\{i,j | a_i - a_j\leq -2\}} (a_j - a_i -1).$$ 

A rational curve $C$ of degree $e$ in $\PP^n$ is the image of a morphism $f: \PP^1 \rightarrow \PP^n$, where $$f= (f_0 : \dots : f_n)$$ is defined by homogeneous polynomials $f_i(s,t)$ of degree $e$ without common factors. We always assume that $f$ is {\em birational onto its image} and that the image is {\em nondegenerate}.  We say that the curve $C$ is {\em immersed} or the morphism $f$ is {\em unramified} if the natural map $f^* \Omega_{\PP^n} \to \Omega_{\PP^1}$ is surjective.\footnote{In the literature, authors describe the same condition commonly as $C$ has {\em ordinary singularities} (see \cite{EisenbudVandeVen} and \cite{Sacchiero}). In this paper, we avoid this terminology.} In this case, the kernel is identified with the conormal bundle $N_{f}^*= \mathcal{H}om(N_{f}, \OO_{\PP^1})$, where $N_{f}$ is the normal sheaf. We conclude that $N_{f}$ is a vector bundle of rank $n-1$ and degree $e(n+1)-2$. 

Let $$\partial f = \left( \begin{array}{ccc}  \partial_s f_0 & \dots & \partial_s f_n \\ \partial_t f_0 & \dots & \partial_t f_n \end{array} \right)$$ denote the transpose of the Jacobian matrix. For an unramified morphism, the Euler sequences for $\Omega_{\PP^n}$ and $\Omega_{\PP^1}$ 
 induce a surjective map $$\OO_{\PP^1}(-e)^{n+1}  \stackrel{\partial f}{\longrightarrow} \OO_{\PP^1}(-1)^2$$ and identify the conormal bundle $N_f^{*}$ with the kernel of $\partial f$ \cite{GhioneSacchiero} \cite{Sacchiero2}.  Thus, the normal bundle $N_f$ has splitting type $ \bigoplus_{i=1}^{n-1} \OO(e+b_i)$ if and only if  the kernel of the map $\partial f$ has  splitting type $\bigoplus_{i=1}^{n-1} \OO(-e-b_i)$. In other words, the space of relations among the columns of $\partial f$ is generated by forms of degree $b_i$ for $1 \leq i \leq n-1$. We may view a relation of degree $b_i$ among the columns of $\partial f$ as a parameterized rational curve of degree $b_i$ in $\PP^{n*}$. We will frequently discuss the geometry of the rational curves defined by these relations.

We will need to use the following basic observation. 
\begin{lemma}\label{lem-basicpolynomial}
Let $(f_0(s,t), \dots, f_n(s,t))$ be  an $(n+1)$-tuple of homogeneous polynomials of degree $e$ in $s,t$. Let $(a_0, \dots, a_n)$ be an $(n+1)$-tuple of homogeneous polynomials of degree $b$ in $s,t$.
If $\sum_{i=0}^n a_i \partial_s f_i = \sum_{i=0}^n a_i \partial_t f_i =0$, then $\sum_{i=0}^n f_i \partial_s a_i = \sum_{i=0}^n f_i \partial_t a_i =0.$
\end{lemma}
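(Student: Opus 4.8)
The plan is to reduce the identity to a statement about the Euler vector field and polynomial differentiation. The hypothesis says that the degree-$b$ vector $(a_0,\dots,a_n)$ is a relation among the columns of $\partial f$; i.e.\ $\sum_i a_i \partial_s f_i = 0$ and $\sum_i a_i \partial_t f_i = 0$. I want to conclude the "dual" relation $\sum_i f_i \partial_s a_i = \sum_i f_i \partial_t a_i = 0$. The key observation is that each $f_i$ is homogeneous of degree $e$ and each $a_i$ is homogeneous of degree $b$, so Euler's relation gives $s\,\partial_s f_i + t\,\partial_t f_i = e f_i$ and $s\,\partial_s a_i + t\,\partial_t a_i = b a_i$.

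First I would consider the scalar quantity $g = \sum_{i=0}^n a_i f_i$, a homogeneous polynomial of degree $e+b$. Differentiating, $\partial_s g = \sum_i (\partial_s a_i) f_i + \sum_i a_i (\partial_s f_i)$, and by hypothesis the second sum vanishes, so $\partial_s g = \sum_i f_i \partial_s a_i$; similarly $\partial_t g = \sum_i f_i \partial_t a_i$. Thus it suffices to show $g$ is constant, hence (being homogeneous of positive degree, assuming $e+b>0$) identically zero. To see $g=0$, apply Euler's relation to $g$ directly: $(e+b)g = s\,\partial_s g + t\,\partial_t g = s\sum_i f_i \partial_s a_i + t \sum_i f_i \partial_t a_i = \sum_i f_i(s\,\partial_s a_i + t\,\partial_t a_i) = b\sum_i f_i a_i = b g$. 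Hence $(e+b)g = b g$, i.e.\ $e g = 0$, so $g = 0$ since $e>0$. Then $\partial_s g = \partial_t g = 0$ gives exactly the desired identities.

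Alternatively, and perhaps more transparently, one can avoid computing $g$ by working directly with the two scalars $P = \sum_i f_i \partial_s a_i$ and $Q = \sum_i f_i \partial_t a_i$ and showing $sP + tQ$ and a second independent combination both vanish; but the route through $g$ is cleanest. The only thing to be careful about is the degenerate case $e = 0$ (excluded here, since $e$ is the degree of a nondegenerate map to $\PP^n$ with $n\ge 1$) and the case $b=0$ (then each $a_i$ is constant and $\partial_s a_i = \partial_t a_i = 0$, so both sides are trivially zero). I expect no real obstacle: the whole content is a symmetric bookkeeping with Euler's identity, and the argument above is complete modulo these trivial remarks.
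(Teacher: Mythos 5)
Your proof is correct and takes essentially the same route as the paper: both arguments come down to showing the scalar identity $\sum_i a_i f_i = 0$ and then differentiating it, using the hypothesis to cancel the terms $\sum_i a_i \partial_j f_i$. The only cosmetic difference is that the paper deduces $\sum_i a_i f_i = 0$ by applying Euler's relation to the $f_i$ (contracting the hypothesis with $(s,t)$), whereas you apply Euler to $g=\sum_i a_i f_i$ and the $a_i$; in characteristic zero with $e>0$ these are interchangeable.
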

\begin{proof}
By Euler's relation, the equalities \[ \sum_{i=0}^n a_i \partial_j f_i = 0, \quad j \in \{s,t\} \]
imply 
\[ \sum_{i=0}^n a_i f_i = 0. \]
Differentiating this relation, we see
\[ \sum_i f_i \partial_j a_i + \sum_i a_i \partial_j f_i = \sum_i f_i \partial_j a_i = 0 . \]
\end{proof}

\begin{corollary}
We can write $$N_{f} = \bigoplus_{i=1}^{n-1} \OO(e+b_i),$$ where $2 \leq b_1 \leq \dots \leq b_{n-1}$ and  $\sum_{i=1}^{n-1} b_i = 2e-2$.  
\end{corollary}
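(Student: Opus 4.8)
The plan is to split the statement into its two assertions: the numerical identity $\sum_{i=1}^{n-1} b_i = 2e-2$, which is immediate, and the inequality $b_1 \geq 2$, which is the only substantive point. For the numerical part, Grothendieck's theorem together with the fact (established above) that $N_f$ is locally free of rank $n-1$ lets us write $N_f \cong \bigoplus_{i=1}^{n-1}\OO(e+b_i)$ for integers $b_i$, which we reorder to be nondecreasing. Comparing degrees in the defining sequence $0 \to T_{\PP^1} \to f^*T_{\PP^n} \to N_f \to 0$ gives $(n-1)e + \sum_i b_i = \deg N_f = e(n+1)-2$, hence $\sum_i b_i = 2e-2$.

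For $b_1 \geq 2$ I would use the identification, recalled in the preliminary discussion, of $N_f^{*}$ with the kernel of $\partial f \colon \OO(-e)^{n+1} \to \OO(-1)^2$. The assertion that some $b_i \leq 1$ is equivalent to $h^0(\PP^1, N_f^{*}(e+1)) \neq 0$; since $N_f^{*}(e+1) = \ker(\partial f)(e+1) \subseteq \OO_{\PP^1}(1)^{n+1}$, this is in turn equivalent to the existence of a nonzero tuple $(a_0,\dots,a_n)$ of linear forms in $s,t$ with $\sum_i a_i\,\partial_s f_i = \sum_i a_i\,\partial_t f_i = 0$. Supposing such a tuple exists, Lemma~\ref{lem-basicpolynomial}, applied with $b = 1$, yields $\sum_i f_i\,\partial_s a_i = \sum_i f_i\,\partial_t a_i = 0$. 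Because each $a_i$ is linear, $\partial_s a_i$ and $\partial_t a_i$ are scalars, so these are linear relations with constant coefficients among $f_0,\dots,f_n$; nondegeneracy of $f$ forces the $f_i$ to be linearly independent, hence $\partial_s a_i = \partial_t a_i = 0$ and so $a_i = 0$ for all $i$, a contradiction. Therefore $b_1 \geq 2$, which completes the proof.

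The argument is short, and I do not anticipate a genuine obstacle. The only point requiring care is the bookkeeping that identifies $b_1 \leq 1$ with the existence of a relation with linear-form coefficients: the twist by $e+1$ is chosen precisely so that a section of $N_f^{*}(e+1)$ lands in $\OO_{\PP^1}(1)^{n+1}$, and this single case subsumes both $b_1 = 0$ and $b_1 = 1$. (If one preferred, the case $b_1 = 0$ could be dispatched separately: a constant relation gives $\sum_i a_i f_i = 0$ by Euler's relation, directly contradicting nondegeneracy.) The hypotheses doing the real work are that $f$ is unramified, so that $N_f$ is locally free and the kernel description of $N_f^{*}$ is available, and that $f$ is nondegenerate, which supplies the linear independence of the $f_i$.
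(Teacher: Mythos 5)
Your proof is correct and follows essentially the same route as the paper: interpret a summand $\OO(e+b_i)$ with $b_i\le 1$ as a relation of degree at most $1$ among the columns of $\partial f$, apply Lemma~\ref{lem-basicpolynomial} to get a constant-coefficient relation among the $f_i$, and contradict nondegeneracy. Your write-up is only slightly more explicit than the paper's, in treating $b_1=0$ alongside $b_1=1$ and in noting that the resulting scalar relation cannot be trivial unless the original tuple was zero.
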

\begin{proof}
To see that $b_1 \geq 2$, we can argue as follows. If $b_1=1$,  the map $\OO(-e-1) \to \OO(-e)^{n+1}$ gives a linear relation among the partial derivatives of $f_i$. By Lemma \ref{lem-basicpolynomial}, we obtain a scalar relation among the $f_i$. Hence, the map $f$ is degenerate, contrary to assumption.
\end{proof}

\subsection{The splitting type of the restricted tangent bundle}

The Euler sequence
\[ 0 \to f^* \Omega_{\PP^n} \to \OO(-e)^{n+1} \to \OO \to 0  \]
identifies $f^* \Omega_{\PP^n}$ as the kernel of the homomorphism induced by $f$. Consider the family of homomorphisms $\Hom(\OO(-e)^{n+1}, \OO)$. The Kodaira-Spencer map $$\kappa: \Hom(\OO(-e)^{n+1}, \OO) \to \Ext^1(f^* \Omega_{\PP^n}, f^* \Omega_{\PP^n})$$ factors through the natural morphisms  $$\Hom(\OO(-e)^{n+1}, \OO) \stackrel{\phi}{\longrightarrow} \Ext^1(\OO(-e)^{n+1}, f^* \Omega_{\PP^n}) \stackrel{\psi}{\longrightarrow} \Ext^1( f^* \Omega_{\PP^n},f^* \Omega_{\PP^n}),$$ where $\phi$ and $\psi$ are maps in the long exact sequence obtained by applying $\Hom(\OO(-e)^{n+1}, -)$ and $\Hom(-, f^*\Omega_{\PP^n})$, respectively. Since $\Ext^1(\OO(-e)^{n+1}, \OO(-e)^{n+1}) = 0$ and $\Ext^2(\OO, f^*\Omega_{\PP^n})=0$, we conclude that both $\phi$ and $\psi$ are surjective. Therefore, the Kodaira-Spencer map is surjective for unramified $f$. In fact,  Ramella more generally proves the following.

\begin{theorem}\cite{Ramella}
The locally closed locus in $\Mor_e(\PP^1, \PP^n)$ parameterizing unramified morphisms where $f^* T_{\PP^n}$ has a specified splitting type is irreducible of the expected dimension $(e+1)(n+1) -1 - h^1(\End(f^* T_{\PP^n}))$.
\end{theorem}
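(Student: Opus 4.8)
The plan is to exhibit the stratum in question as the base of an explicit torsor under a connected group over an open subvariety of an affine space; irreducibility and the dimension count then follow formally, and the surjectivity of the Kodaira--Spencer map recorded above becomes a byproduct rather than an input.

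First I would set notation. Write $f^*T_{\PP^n}\cong\bigoplus_{i=1}^n\OO(e+c_i)$. Since each summand of $f^*T_{\PP^n}$ is a quotient of $\OO(e)^{n+1}$ (via the dual Euler sequence $0\to\OO\to\OO(e)^{n+1}\to f^*T_{\PP^n}\to0$, and a summand of a quotient is a quotient), a line bundle quotient of $\OO(e)^{n+1}$ has degree $\ge e$, so $c_i\ge 0$; comparing degrees gives $\sum_i c_i=e$. Fix such a splitting type; set $E:=\bigoplus_{i=1}^n\OO(e+c_i)$ and $G:=\operatorname{Aut}(E)$, a connected linear algebraic group with $\dim G=h^0(\End(E))=n^2+h^1(\End(E))$ (using $\chi(\End(E))=n^2$ by Riemann--Roch); and let $V:=\Hom(\OO(e)^{n+1},E)$, a vector space with $\dim V=(n+1)\sum_i(c_i+1)=(n+1)(e+n)$. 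Let $\Sigma\subseteq\Mor_e(\PP^1,\PP^n)$ be the locally closed locus of unramified $f$ with $f^*T_{\PP^n}$ of this splitting type; if $\Sigma=\emptyset$ there is nothing to prove.

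The heart of the argument is the incidence variety $Z$ of pairs $(f,q)$ with $f\in\Sigma$, with $q\colon\OO(e)^{n+1}\to E$ surjective, and with $q$ annihilating the section $s_f\colon\OO\hookrightarrow\OO(e)^{n+1}$ cut out by $f$ (well defined, as $(f_0,\dots,f_n)$ is determined up to scalar); note that then $\OO(e)^{n+1}/\ker q\cong E$ automatically, so ``$f\in\Sigma$'' is actually forced. I would study the two projections. Projection to $V$ is an isomorphism onto an open subvariety $U\subseteq V$: if $q$ is surjective then $\ker q$ is a line subbundle of $\OO(e)^{n+1}$ of degree $e-\sum c_i=0$, hence $\ker q\cong\OO$, and since $\langle s_f\rangle$ is a saturated subbundle of the same degree contained in $\ker q$ we get $\ker q=\langle s_f\rangle$, which recovers $f$ algebraically; $U$ is nonempty because any $f\in\Sigma$ produces, via the dual Euler sequence and a choice of isomorphism $f^*T_{\PP^n}\cong E$, a point of $U$. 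Hence $Z\cong U$ is irreducible of dimension $(n+1)(e+n)$. Projection to $\Sigma$ is surjective, and the fibre over $f\in\Sigma$ is the set of surjections $q$ with kernel $\langle s_f\rangle$, i.e.\ (factoring through $\OO(e)^{n+1}/\langle s_f\rangle\cong f^*T_{\PP^n}$) the set of isomorphisms $f^*T_{\PP^n}\xrightarrow{\ \sim\ }E$, which is a torsor under $G$ and is nonempty exactly because $f\in\Sigma$. Since $G$ is an iterated extension of copies of $\mathbb{G}_a$ and $\mathbb{G}_m$ it is special, so $Z\to\Sigma$ is a Zariski-locally trivial $G$-torsor. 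Therefore $\Sigma$, being the image of the irreducible variety $Z$, is irreducible, and
\[
\dim\Sigma=\dim Z-\dim G=(n+1)(e+n)-\bigl(n^2+h^1(\End(f^*T_{\PP^n}))\bigr)=(e+1)(n+1)-1-h^1(\End(f^*T_{\PP^n})),
\]
the expected dimension. (As a bonus, $Z\cong U$ is smooth and $G$ is smooth, so $\Sigma$ is smooth.)

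I expect the only delicate points to be the degree bookkeeping ensuring $\ker q\cong\OO$ for every surjection $q\in V$ — which uses both $\sum c_i=e$ and $\operatorname{Pic}^0\PP^1=0$ — and the verification that the fibres of $Z\to\Sigma$ are genuinely $G$-torsors; after these, the proof is purely formal, which is exactly what makes $f^*T_{\PP^n}$ so much better behaved than $N_f$. A more deformation-theoretic alternative would invoke the Kodaira--Spencer surjectivity directly: the family of bundles $f^*T_{\PP^n}$ over $\Mor_e(\PP^1,\PP^n)$ is then versal, so étale-locally $\Sigma$ coincides with the splitting-type stratum inside the versal base $\Ext^1(E,E)$, whose smoothness, irreducibility, and codimension $h^1(\End(E))$ would be the nontrivial ingredient there; I find the torsor argument cleaner and essentially self-contained.
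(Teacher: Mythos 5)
Your argument is correct in substance, but it is genuinely different from what this paper offers for the statement: the paper does not prove Ramella's theorem at all (it is cited from \cite{Ramella}), and the only related material in \S\ref{sec-prelim} is the deformation-theoretic observation that the Kodaira--Spencer map $\Hom(\OO(-e)^{n+1},\OO)\to \Ext^1(f^*\Omega_{\PP^n},f^*\Omega_{\PP^n})$ is surjective for unramified $f$. That observation, combined with the versal-deformation codimension count of \cite[Lemma 2.4]{Coskun}, explains the expected codimension locally but says nothing by itself about irreducibility, which is exactly what your construction supplies. Presenting the stratum $\Sigma$ as the image of an open subset $U$ of the affine space $V=\Hom(\OO(e)^{n+1},E)$, with fibres of $Z\to\Sigma$ the $\operatorname{Aut}(E)$-torsors of isomorphisms $f^*T_{\PP^n}\cong E$, gives irreducibility and the dimension in one stroke, and your numerics are right: $\dim V=(n+1)(e+n)$, $\dim\operatorname{Aut}(E)=h^0(\End E)=n^2+h^1(\End E)$, and $(n+1)(e+n)-n^2=(e+1)(n+1)-1$. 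Note that for the two conclusions actually claimed in the theorem you do not even need specialness of $\operatorname{Aut}(E)$ or Zariski-local triviality: surjectivity of $Z\to\Sigma$ gives irreducibility, and constancy of the fibre dimension gives the dimension count; the torsor refinement is only needed for your smoothness ``bonus.''

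One claim does need repair. It is not true that ``$f\in\Sigma$ is forced'' for every surjection $q\in V$ annihilating the section of a point of $\Mor_e(\PP^1,\PP^n)$: surjectivity of $q$ does force $\OO(e)^{n+1}/\ker q\cong E$, so the splitting type of $f_q^*T_{\PP^n}$ is automatic, but unramifiedness of $f_q$ is not. For instance, in $\PP^3$ with $e=4$ the ramified map $(s^4,s^2t^2,st^3,t^4)$ and the unramified map $(s^4,s^3t,st^3,t^4)$ have restricted tangent bundles with the same splitting type, so the surjective locus in $V$ genuinely contains points outside your $Z$. The fix is immediate: take $U$ to be the locus of surjective $q$ whose associated map $f_q$ (recovered from $\ker q\cong\OO$, as you do) is unramified; since $q\mapsto f_q$ is a morphism on the surjective locus and unramifiedness is open in $\Mor_e(\PP^1,\PP^n)$, this $U$ is open in $V$, nonempty exactly when $\Sigma\neq\emptyset$, and hence still irreducible of dimension $\dim V$. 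With that adjustment the rest of your argument goes through unchanged.
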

Alzati and Re \cite{AlzatiRe3} have further studied the geometry of the loci of rational curves where $f^* T_{\PP^n}$ has a specified splitting type.

\subsection{The possible splitting types of the normal bundle}  In this subsection, we recall Sacchiero's construction of an unramified  $f$ with a specified splitting type for its normal bundle (see \cite{Sacchiero2}). We will use this construction throughout the paper. For our purposes, the relations among the columns of $\partial f$ will be especially important.

Let $\delta_1 = 1$, $\delta_i = b_{i-1} - \delta_{i-1}$ for $1 < i \leq n-1$.  Let $c = 1 + \sum_{i=1}^{n-1} \delta_i$.  Let $p(s,t)$ and $q(s,t)$ be general polynomials of degree $e-c$ (it is enough to assume that $p$ and $q$ do not have common roots or multiple roots and are not divisible by $s$ or $t$).  Let $k_i = c - \sum_{j=1}^{i} \delta_j$ for $0 \leq i \leq n-1$. Observe that $k_0=c,  k_1=c-1$ and $k_{n-1} =1$. Let $f$ be given by the tuple
\[ f = (s^{k_0} p, \ s^{k_1}t^{c-k_1} p, \ s^{k_2}t^{c-k_2} p, \dots, s^{k_{n-2}}t^{c- k_{n-2}} p, \ s^{k_{n-1}} t^{c-k_{n-1}} p, \ t^c q) .\]

\begin{lemma}[Sacchiero's Lemma \cite{Sacchiero2}]\label{lem-Sacchiero}
The map $f$ is unramified and $N_f \cong \bigoplus_{i=1}^{n-1} \OO(e+b_i)$. 
\end{lemma}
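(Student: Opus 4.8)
The plan is to verify Sacchiero's construction by explicit computation, exploiting the monomial structure of $f$. First I would establish that $f$ is nondegenerate (the monomials $s^{k_i}t^{c-k_i}$ are pairwise distinct because the $\delta_j$ are positive, and one may check the $k_i$ are strictly decreasing, so the $n+1$ coordinate polynomials are linearly independent up to the generic factors $p,q$) and that $f$ is unramified. Unramifiedness is equivalent to saying the $2\times(n+1)$ matrix $\partial f$ has rank $2$ at every point of $\PP^1$; since $p$ and $q$ have no common or multiple roots and are not divisible by $s$ or $t$, one checks the $2\times 2$ minors do not all vanish at $s=0$, at $t=0$, or at a common zero of $p$ (there is none) — the key being that near each special point two of the columns behave like $(s^a, s^{a+1})$-type pairs whose Wronskian is a nonzero power of $s$.

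The heart of the argument is to exhibit $n-1$ relations among the columns of $\partial f$ of degrees $b_1,\dots,b_{n-1}$ and to show they generate the kernel of $\partial f$ as an $\OO_{\PP^1}$-module. For the $i$-th relation I would use, up to constants, the three consecutive coordinates indexed by $i-1,i,i+1$ (with the understanding that the two ends involve the factors $p$ and $q$): a relation of the shape $\alpha\, s^{\delta_{i+1}}\,\partial(\text{column }i{-}1) - \beta\, (\text{mixed})\,\partial(\text{column }i) + \gamma\, t^{\delta_i}\,\partial(\text{column }i{+}1) = 0$, coming from the fact that any three monomials $s^a t^b$ of the same total degree satisfy a $2$-term-coefficient linear syzygy with monomial coefficients, and that differentiation preserves such syzygies after accounting for Euler's relation as in Lemma~\ref{lem-basicpolynomial}. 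The degree of the $i$-th such relation works out to $\delta_i + \delta_{i+1} = b_i$ by the recursion $\delta_{i+1} = b_i - \delta_i$, which is exactly why Sacchiero defines the $\delta_i$ this way. I would write these relations out as an $(n-1)\times(n+1)$ matrix of homogeneous forms and observe it is ``tridiagonal'' with nonzero monomial entries on the diagonal band.

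To conclude that these $n-1$ relations generate the full kernel, I would use a degree/Euler-characteristic count: the kernel of $\partial f\colon \OO(-e)^{n+1}\to\OO(-1)^2$ is a rank $n-1$ bundle of degree $-e(n+1)+2$, and any subbundle generated by sections of degrees exactly $-e-b_1,\dots,-e-b_{n-1}$ with $\sum b_i = 2e-2$ already has the right rank and degree, so if the inclusion is an inclusion of bundles (saturated, i.e. the quotient is torsion-free) it must be an equality. Saturation is checked pointwise: at each of the finitely many points where the band matrix of relations could drop rank one verifies, using again that $p,q$ have no repeated roots and are not divisible by $s,t$, that the relations remain independent and primitive (no common factor can be pulled out), so the $\OO(-e-b_i)$ do not glue together into something more positive. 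The main obstacle I expect is precisely this last step — bookkeeping the orders of vanishing of the tridiagonal relation matrix at $s=0$, at $t=0$, and at the zeros of $p$ and $q$, to rule out the existence of a relation of lower degree than $b_1$ or of a ``jump'' forcing a different splitting type; the computation is elementary but the indexing via the $k_i$ and $\delta_i$ is delicate, and one must handle the two boundary columns (carrying $p$ and $q$) as special cases separately from the generic interior columns.
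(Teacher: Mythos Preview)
Your broad strategy---exhibit explicit relations of the prescribed degrees among the columns of $\partial f$, check they form a saturated subsheaf, and conclude by a rank/degree count---is sound, and in fact the paper uses exactly this approach later for pure monomial curves (Theorem~\ref{monomialSplitting}). But for Sacchiero's construction there is a subtlety you have underestimated. The first $n$ coordinates of $f$ all carry the factor $p$, while the last coordinate is $t^c q$ with $q$ a \emph{different} generic polynomial of degree $e-c$. Your proposed $(n-1)$-st tridiagonal relation, involving columns $n-2,\ n-1,\ n$, would have to relate the derivatives of $s^{k_{n-2}}t^{c-k_{n-2}}p$, $st^{c-1}p$, and $t^c q$; since $p$ and $q$ are independent generic polynomials, there is no relation among these columns with monomial coefficients of degree $b_{n-1}=\delta_{n-1}+\delta_n$ in the pattern you describe. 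An $(n-1)$-st generator of $N_f^*$ certainly exists abstractly, but it is not tridiagonal with explicit monomial entries, and writing it down would require encoding the relationship between $p$ and $q$. So the issue is not ``delicate bookkeeping at zeros of $p$ and $q$''; it is that one of your $n-1$ relations does not have the shape you assert.

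The paper sidesteps this by writing down only the $n-2$ easy relations $R_1,\dots,R_{n-2}$---those involving three consecutive $p$-coordinates, where $p$ cancels and the computation is purely monomial. Two explicit $(n-2)\times(n-2)$ minors of the resulting matrix are computed (pure powers of $s$ and of $t$ respectively) and shown never to vanish simultaneously, so the span is a rank-$(n-2)$ subbundle of $N_f^*$. The quotient is then a line bundle, necessarily $\OO(-e-b_{n-1})$ by degree. The final step is an $\Ext^1$ vanishing: since $b_{n-1}\geq \max_i b_i - 1$, one has $\Ext^1\bigl(\OO(-e-b_{n-1}),\bigoplus_{i=1}^{n-2}\OO(-e-b_i)\bigr)=0$, and the short exact sequence splits. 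This replaces the missing $(n-1)$-st explicit relation by a cohomological argument and is the key idea your proposal lacks.
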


\begin{proof} We briefly sketch aspects of Sacchiero's argument that we will later invoke.
A simple calculation shows that $f$ is unramified. Computing $N_f$ is equivalent to computing the kernel of the map $$\partial f: \OO(-e)^{n+1} \rightarrow \OO(-1)^2.$$ We first describe $n-2$ relations satisfied by the columns of $\partial f$. Let $R_i$ for $1 \leq i \leq n-2$ be the row vector $(a_0, \dots, a_n)$, where $a_j =0$ for $j \not= i-1,i, i+1$ and $$ a_{i-1} = (k_{i} - k_{i+1})t^{k_{i-1} - k_{i+1}}, \quad a_i = - (k_{i-1}-k_{i+1})s^{k_{i-1}-k_i}t^{k_i-k_{i+1}}, \quad a_{i+1} = (k_{i-1} - k_i)s^{k_{i-1}-k_{i+1}}.$$ It is easy to see that the columns of $\partial f$ satisfy $R_i$. Let $R$ be the matrix whose rows are $R_i$ for $1 \leq i \leq n-2$. Then $R$ defines a map
$$R: \bigoplus_{i=1}^{n-2} \OO(-e-b_i) \rightarrow \OO(-e)^{n+1}.$$ Since the image of $R$ is contained in the kernel of $\partial f$, the map $R$ factors through the inclusion of $N_f^* \rightarrow \OO(-e)^{n+1}$. An easy computation shows that  $(n-2) \times (n-2)$ minor of $R$ obtained by omitting the first two and the last columns is $\prod_{i=1}^{n-2} (k_{i-1} - k_i) s^{k_{i-1} - k_{i+1}}$. Similarly, the $(n-2) \times (n-2)$ minor of $R$ obtained by omitting the last three columns is  $\prod_{i=1}^{n-2} (k_i - k_{i+1}) t^{k_{i-1} - k_{i+1}}$. Since these minors never simultaneously vanish on $\PP^1$, we conclude that the rank of $R$ is always equal to $n-2$. Hence, the image of $R$ is a subbundle of $N_f^*$. Hence, by degree considerations, we obtain an exact sequence 
$$0 \rightarrow \bigoplus_{i=1}^{n-2} \OO(-e-b_i) \rightarrow N_f^* \rightarrow \OO(-e-b_{n-1}) \rightarrow 0.$$
To conclude that $N_f^*$ is isomorphic to $\bigoplus_{i=1}^{n-1} \OO(-e-b_i)$, it suffices to observe that there are no nontrivial extensions of this form provided $b_{n-1} \geq \max_{1 \leq i \leq n-2} \{b_i\} -1.$
\end{proof}

\begin{remark}\label{rem-orderb}
Note that in the proof we did not need to use that $b_1 \leq b_2 \leq \dots \leq b_{n-1}$. We only needed that $b_{n-1} \geq \max_{1 \leq i \leq n-2} \{b_i\} -1.$ This simplification will make certain constructions later in the paper simpler.
\end{remark}

The next corollary follows from the proof of Lemma \ref{lem-Sacchiero} and Remark \ref{rem-orderb}.

\begin{corollary}
\label{cor-examples}
Let $1=\delta_1, \delta_2, \dots, \delta_{n-1}$ be a sequence of positive integers, $e \geq n$ an integer, and
\[b_{n-1} = 2e-2-2\sum_{i=1}^{n-1} \delta_i + \delta_1 + \delta_{n-1} .\]
Assume 
\[ b_{n-1} \geq \max_i \{ \delta_i + \delta_{i+1} \}-1.\]  
Then there is a nondegenerate rational curve $f$ of degree $e$ in $\PP^n$ with normal bundle $\bigoplus_i \OO(e+b_i)$, where $b_i = \delta_{i}+\delta_{i+1}$ for $1 \leq i \leq n-2$.  The columns of $\partial f$ satisfy the relations $R_i$ from the proof of Lemma \ref{lem-Sacchiero}.
\end{corollary}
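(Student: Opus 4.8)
The plan is to run the construction from the proof of Lemma \ref{lem-Sacchiero} directly on the prescribed data $\delta_1,\dots,\delta_{n-1}$, instead of deriving the $\delta_i$ from an increasing sequence $b_\bullet$, and then to check that the few numerical facts that argument uses are all supplied by the hypotheses. Concretely, I would set $c = 1 + \sum_{i=1}^{n-1}\delta_i$ and $k_i = c - \sum_{j=1}^{i}\delta_j$ for $0 \le i \le n-1$, so that $k_0 = c$, $k_{n-1} = 1$, and $k_{i-1} - k_i = \delta_i \ge 1$; in particular the integers $k_i$ are strictly decreasing. Since $b_i = \delta_i + \delta_{i+1}$ for $1 \le i \le n-2$, one has $\sum_{i=1}^{n-2} b_i = 2\sum_{i=1}^{n-1}\delta_i - \delta_1 - \delta_{n-1}$, so the defining formula for $b_{n-1}$ is simply a restatement of $\sum_{i=1}^{n-1} b_i = 2e-2$, and it yields the identity $2(e-c) = b_{n-1} - \delta_{n-1} - 1$.

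From here the first point to settle is $e - c \ge 0$, so that general polynomials $p,q$ of degree $e-c$ exist. This follows from the identity above together with $b_{n-1} \ge \max_i\{\delta_i+\delta_{i+1}\} - 1 \ge \delta_{n-2} + \delta_{n-1} - 1 \ge \delta_{n-1}$, which gives $e - c \ge -\tfrac12$, whence $e - c \ge 0$ by integrality. I would then form
\[ f = (s^{k_0}p,\ s^{k_1}t^{c-k_1}p,\ \dots,\ s^{k_{n-1}}t^{c-k_{n-1}}p,\ t^{c}q), \]
note that $f$ is unramified by the same calculation as in Lemma \ref{lem-Sacchiero}, and observe that $f$ is nondegenerate because the exponents $k_i$ are pairwise distinct and $p,q$ are general, so the $n+1$ coordinate polynomials are linearly independent.

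The remaining steps are a verbatim rerun of Sacchiero's argument, now legitimate because that argument only ever used the strict monotonicity of the $k_i$ and the inequality on $b_{n-1}$, not the ordering of the $b_i$ (cf. Remark \ref{rem-orderb}). The row vectors $R_i$ for $1 \le i \le n-2$ have entries of degree $k_{i-1} - k_{i+1} = \delta_i + \delta_{i+1} = b_i$ and are satisfied by the columns of $\partial f$; the $(n-2)\times(n-2)$ minors of the matrix $R$ obtained by deleting the first two and the last columns, respectively the last three columns, are $\prod_{i=1}^{n-2} (k_{i-1}-k_i)\, s^{k_{i-1}-k_{i+1}}$ and $\prod_{i=1}^{n-2} (k_i - k_{i+1})\, t^{k_{i-1}-k_{i+1}}$, which are nonzero and never vanish simultaneously on $\PP^1$ since the $k_i$ strictly decrease; hence $R$ exhibits $\bigoplus_{i=1}^{n-2}\OO(-e-b_i)$ as a subbundle of $N_f^*$ and a degree count gives
\[ 0 \to \bigoplus_{i=1}^{n-2}\OO(-e-b_i) \to N_f^* \to \OO(-e-b_{n-1}) \to 0. \]
By Remark \ref{rem-orderb} this sequence splits once $b_{n-1} \ge \max_{1 \le i \le n-2}\{b_i\} - 1$, which is precisely the standing hypothesis $b_{n-1} \ge \max_i\{\delta_i+\delta_{i+1}\}-1$; thus $N_f \cong \bigoplus_{i=1}^{n-1}\OO(e+b_i)$ and by construction the columns of $\partial f$ satisfy the relations $R_i$. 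I expect no genuine obstacle here: the only real work is the bookkeeping needed to confirm that the data $(\delta_\bullet,e)$ meets every numerical requirement of the construction — above all $e-c\ge 0$ and the match between the splitting bound of Remark \ref{rem-orderb} and the hypothesis on $b_{n-1}$.
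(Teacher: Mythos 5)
Your proposal is correct and is essentially the paper's own argument: the paper derives Corollary \ref{cor-examples} directly from the construction in the proof of Lemma \ref{lem-Sacchiero} together with Remark \ref{rem-orderb}, exactly as you do. Your added bookkeeping (the identity $2(e-c)=b_{n-1}-\delta_{n-1}-1$ giving $e-c\ge 0$, nondegeneracy, and the match between the hypothesis on $b_{n-1}$ and the splitting bound) correctly fills in the details the paper leaves implicit.
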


Sacchiero uses Lemma \ref{lem-Sacchiero} to deduce the following theorem.

\begin{theorem}\label{thm-Sacchiero}\cite{Sacchiero2}
For $1 \leq i \leq n-1$, let $b_i \geq 2$ satisfy $\sum_{i=1}^{n-1} b_i = 2e-2$.  Then there exists an unramified map $f: \PP^1 \to \PP^n$ such that 
\[ N_{f} = \bigoplus_{i=1}^{n-1} \OO(e+b_i) .\]
In particular, the general smooth rational curve in $\PP^n$ has balanced normal bundle.
\end{theorem}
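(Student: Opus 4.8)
The plan is to reduce the statement to an application of Corollary \ref{cor-examples}, whose hypotheses are phrased in terms of an auxiliary sequence $\delta_\bullet$ of positive integers rather than in terms of the target splitting type $b_\bullet$ directly. The first step is therefore purely combinatorial: given integers $b_1, \dots, b_{n-1} \geq 2$ with $\sum b_i = 2e-2$, I must produce a permutation of the $b_i$ and a sequence $1 = \delta_1, \delta_2, \dots, \delta_{n-1}$ of positive integers such that $b_i' = \delta_i + \delta_{i+1}$ for $1 \leq i \leq n-2$ (where $b'_\bullet$ is the chosen reordering), and such that the leftover value $b'_{n-1}$ both equals $2e-2 - 2\sum \delta_i + \delta_1 + \delta_{n-1}$ and satisfies the inequality $b'_{n-1} \geq \max_i\{\delta_i + \delta_{i+1}\} - 1$. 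Once such data exist, Remark \ref{rem-orderb} tells us the ordering of the $b_i$ is irrelevant to Sacchiero's construction, so Corollary \ref{cor-examples} immediately produces the desired unramified $f$ with $N_f = \bigoplus_i \OO(e+b_i)$.

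The key observation making the combinatorial step work is that, by setting $\delta_1 = 1$ and recursively $\delta_{i+1} = b'_i - \delta_i$, the sequence $\delta_\bullet$ is completely determined by the choice of reordering $b'_1, \dots, b'_{n-2}$; the recursion produces integers automatically, and one needs only to check positivity $\delta_i \geq 1$ and the final inequality. I would choose the reordering greedily: at each stage pick as $b'_i$ the value that keeps $\delta_{i+1} = b'_i - \delta_i$ as close to $1$ as possible, which is feasible because every $b'_i \geq 2$ forces $\delta_{i+1} \geq 2 - \delta_i$, and keeping the $\delta_i$ small and oscillating near $1$ keeps $\max_i\{\delta_i + \delta_{i+1}\}$ small --- one expects it to be at most roughly the largest $b_i$ --- while reserving the largest of the $b_i$ to be the final slot $b'_{n-1}$ so that the inequality $b'_{n-1} \geq \max_i\{b'_i\} - 1$ holds. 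A clean way to organize this: order so that $b'_{n-1} = \max_i b_i$, and among the remaining $b_i$ alternate in a way that forces $\delta_i \in \{1, 2, \dots\}$ small; then verify directly that $\delta_i + \delta_{i+1} \leq b'_{n-1} + 1$ for all $i < n-1$. The constraint $\sum b_i = 2e-2$ ensures the arithmetic in the definition of $b'_{n-1}$ from Corollary \ref{cor-examples} is automatically consistent, since $\sum_{i=1}^{n-2} b'_i = \sum_{i=1}^{n-2}(\delta_i + \delta_{i+1}) = 2\sum_{i=1}^{n-1}\delta_i - \delta_1 - \delta_{n-1}$.

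The main obstacle is exactly the simultaneous satisfaction of positivity of all $\delta_i$ and the final inequality: a careless reordering can produce a $\delta_i$ that is zero or negative (for instance, two consecutive $b'_i = 2$ force $\delta$ to bounce $1, 1, 1$, which is fine, but a large $b'_i$ followed by constraints can overshoot), or can leave $b'_{n-1}$ too small relative to $\max_i\{\delta_i + \delta_{i+1}\}$. I expect this to be handled by a short case analysis or an explicit algorithm — e.g., always place a $b_i = 2$ next whenever one is available (this resets $\delta$ toward $1$), and otherwise place the smallest available value, saving $\max_i b_i$ for last. Finally, for the last sentence of the theorem: the general smooth rational curve of degree $e$ in $\PP^n$ is unramified (smoothness of the image is an open condition and implies immersed), and its normal bundle is a semicontinuous specialization target, so it suffices to exhibit one unramified $f$ with balanced $N_f$; but a balanced splitting type — all $b_i$ equal to $\lfloor (2e-2)/(n-1)\rfloor$ or $\lceil (2e-2)/(n-1)\rceil$, each $\geq 2$ once $e \geq n-1$ or so — is a valid input to the first part, and balancedness is an open condition on $\Mor_e(\PP^1,\PP^n)$, so the locus where $N_f$ is balanced is a nonempty open subset, hence contains the general curve.
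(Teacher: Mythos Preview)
Your approach is exactly the paper's: deduce the theorem from Sacchiero's Lemma (equivalently, from Corollary \ref{cor-examples}) by producing a suitable $\delta$-sequence from the $b_i$. The paper does not give a separate argument beyond invoking Lemma \ref{lem-Sacchiero}.

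That said, your combinatorial step is left incomplete --- ``I expect this to be handled by a short case analysis or an explicit algorithm'' is a gap, and the greedy scheme you sketch is more complicated than necessary. The clean way to fill it (and what the paper's setup implicitly does) is simply to order the $b_i$ increasingly: take $b'_1 \leq b'_2 \leq \cdots \leq b'_{n-1}$. Then with $\delta_1 = 1$ and $\delta_{i+1} = b'_i - \delta_i$, positivity follows by telescoping. For odd $i \geq 3$,
\[
\delta_i = (b'_{i-1} - b'_{i-2}) + (b'_{i-3} - b'_{i-4}) + \cdots + (b'_2 - b'_1) + 1 \geq 1,
\]
and for even $i$,
\[
\delta_i = (b'_{i-1} - b'_{i-2}) + \cdots + (b'_3 - b'_2) + (b'_1 - 1) \geq 1,
\]
since each parenthesized difference is nonnegative and $b'_1 \geq 2$. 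Moreover $\max_{1 \leq i \leq n-2}\{\delta_i + \delta_{i+1}\} = \max_{1 \leq i \leq n-2} b'_i \leq b'_{n-1}$, so the hypothesis $b'_{n-1} \geq \max_i\{\delta_i + \delta_{i+1}\} - 1$ of Corollary \ref{cor-examples} holds with a unit to spare. No greedy choices or case analysis are needed.

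Your argument for the last sentence (balanced is a valid input, balancedness is open, hence generic) is fine; note that nondegeneracy already forces $e \geq n$, which guarantees $\lfloor (2e-2)/(n-1) \rfloor \geq 2$.
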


Other authors (see \cite{Ran}) have studied the generic splitting type of normal bundles of rational curves and described the loci where the splitting is not generic. 

Sacchiero's Theorem implies that unlike the restricted tangent bundle, the stratification of the space of rational curves by the splitting type of the normal bundle is not well-behaved.

\begin{proposition}\label{prop-wrongdim}
For $n \geq 6$, there are nonempty loci $\M(b_{\bullet})$ where the expected codimension is larger than the dimension of  $\Mor_e(\PP^1, \PP^n)$.  In particular, when $(n-2)(2e-2n-1) \geq (e+1)(n+1)$, $M(b_{\bullet}(2^{n-2}))$ is nonempty even though its expected dimension is negative.
\end{proposition}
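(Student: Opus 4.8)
The plan is to establish both assertions by combining Sacchiero's existence theorem (Theorem \ref{thm-Sacchiero}) with a dimension count for the expected codimension of $\M(b_{\bullet}(2^{n-2}))$. First I would observe that $b_{\bullet}(2^{n-2})$ is an admissible sequence precisely when $\sum_{i=1}^{n-1} b_i = 2(n-1) = 2e-2$, i.e.\ when $e = n$; more generally, to keep things nonempty for a range of $e$ one takes the sequence $b_{\bullet}(2^{n-2})$ in the sense defined in the introduction, where $k = n-2$, $d=2$, and the remaining two entries absorb the rest of the degree: $b_1 = \dots = b_{n-2} = 2$ forces $b_{n-1} = 2e-2-2(n-2) = 2e-2n+2$. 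By Theorem \ref{thm-Sacchiero} (equivalently Lemma \ref{lem-Sacchiero} applied with these $b_i$, noting $b_{n-1} = 2e-2n+2 \geq 3 = \max_i\{b_i\}-1$ as soon as $e \geq n+1$), there is an unramified $f$ with $N_f \cong \OO(e+2)^{n-2} \oplus \OO(e+b_{n-1})$, so $\M(b_{\bullet}(2^{n-2}))$ is nonempty.

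Next I would compute the expected codimension $h^1(\End(N_f))$ using the explicit formula recalled in the preliminaries,
\[
h^1(\End(E)) = \sum_{\{i,j \mid a_i - a_j \leq -2\}} (a_j - a_i - 1).
\]
Here the splitting type is $(n-2)$ copies of $e+2$ together with one copy of $e + b_{n-1}$ with $b_{n-1} = 2e-2n+2$. The gap between a small summand and the large one is $a_j - a_i = b_{n-1} - 2 = 2e-2n$, which is $\geq 2$ once $e \geq n+1$, and there are $(n-2)$ small summands each paired with the one large summand (in both orders only the order giving a negative difference contributes, so each unordered pair contributes once). Thus
\[
h^1(\End(N_f)) = (n-2)\bigl((2e-2n) - 1\bigr) = (n-2)(2e-2n-1).
\]
Comparing with $\dim \Mor_e(\PP^1,\PP^n) = (e+1)(n+1)-1$, the expected dimension $(e+1)(n+1)-1-(n-2)(2e-2n-1)$ is negative exactly when $(n-2)(2e-2n-1) \geq (e+1)(n+1)$, which is the stated inequality. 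For the first sentence of the proposition, it remains to check that this inequality can be satisfied with $n \geq 6$: the left side grows like $2ne$ while the right grows like $ne$, so for fixed $n \geq 6$ all sufficiently large $e$ work (one can check $n=6$, large $e$ explicitly since $4(2e-13) \geq 7(e+1)$ reduces to $e \geq 59$).

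The only subtle point, and the one I would be most careful about, is the bookkeeping in the $h^1(\End)$ formula: one must be sure the sum ranges over unordered pairs $\{i,j\}$ with a genuine gap of at least $2$, that equal summands contribute nothing, and that the two "extra" entries $b_{n-1}$ in the general $b_{\bullet}(d^k)$ notation collapse to a single value when $r=0$ (which happens here since $2e-2-2(n-2)$ is whatever it is and we have only one slot left — in fact with $k=n-2$ there is exactly one index $n-1$ remaining, so no splitting into $q,q+1$ occurs and $b_{n-1}$ is a single integer). Everything else is a direct substitution. I would present the argument in two short paragraphs: the first invoking Theorem \ref{thm-Sacchiero} for nonemptiness, the second doing the Euler-characteristic/dimension arithmetic and reading off the inequality.
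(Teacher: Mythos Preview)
Your proposal is correct and follows essentially the same approach as the paper: invoke Sacchiero's theorem for nonemptiness, compute $h^1(\End(N_f)) = (n-2)(2e-2n-1)$ for $N_f = \OO(e+2)^{n-2}\oplus\OO(3e-2n+2)$, and compare growth rates in $e$ to see the expected codimension eventually exceeds $\dim\Mor_e(\PP^1,\PP^n)$ once $n\geq 6$. One tiny slip: in your Sacchiero check the bound should be $\max_{1\le i\le n-2}\{b_i\}-1 = 1$, not $3$, but this only makes your hypothesis milder, so nothing is affected.
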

\begin{proof}
We compute the expected codimension for curves with normal bundle $$N_{f} = \OO(e+2)^{n-2} \oplus \OO(3e-2n+2).$$  The  expected codimension is
\begin{align*}
h^1(\End(\OO(e+2)^{n-2} \oplus \OO(3e-2n+2))) &= (n-2)(3e-2n+2-(e+2)-1)& \\  &= (n-2)(2e-2n-1).& 
\end{align*}

For fixed $n$, this expression grows like $2(n-2)e$ with $e$. On the other hand, the dimension of $\Mor_e(\PP^1, \PP^n)$ grows like $(n+1)e$ with $e$. For $n > 5$, $2(n-2)e$ grows faster than $(n+1)e$. Hence, for sufficiently large $e$ the expected codimension of the locus $\M(b_{\bullet}(2^{n-2}))$ is larger than the dimension of $\Mor_e(\PP^1, \PP^n)$.

\end{proof}

\begin{remark}
As the referee pointed out, since $\Mor_e(\PP^1, \PP^n)$ admits an action of $\PP GL(2)$ that preserves normal bundles, any nonempty locus has dimension at least 3. Hence, one would have expected $M(b_{\bullet}(2^{n-2}))$ to be empty under the weaker inequality  $(n-2)(2e-2n-1) \geq (e+1)(n+1)-3$.
\end{remark}

Finally, Miret showed that if we fix only the lowest degree factor of the normal bundle, then the resulting locus $\M(b_{\bullet}(d))$ is irreducible.  In this case, Eisenbud and Van de Ven's and Ghione and Sacchiero's proofs of irreducibility for $\PP^3$ generalize with little change.
\begin{theorem}
\label{uniqueLowestFactor}\cite{Miret}
Let $2e-2 \geq d(n-1) + n-2$. Then the locus $\M(b_{\bullet}(d))$ is irreducible of the expected dimension.
\end{theorem}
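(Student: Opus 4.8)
The plan is to adapt the classical argument of Eisenbud--Van de Ven and Ghione--Sacchiero (in the shape used for $\PP^3$) to the setting where only the smallest summand of $N_f$ is prescribed, so that the stratum $\M(b_\bullet(d))$ consists of curves for which the normal bundle has $d$ as its lowest twist and is otherwise as balanced as possible. I would set up the usual incidence correspondence: over $\M(b_\bullet(d))$ consider the map that records, via the identification $N_f^* = \ker \partial f$, the rational normal curve in $\PP^{n*}$ of degree $d$ given by the unique (up to scalar) minimal relation $R$ among the columns of $\partial f$. The key point is that the normal bundle having $\OO(e+d)$ as its lowest factor with multiplicity one is an \emph{open} condition once we know a degree-$d$ relation exists, and the remaining part of $N_f^*$ (the quotient by the subbundle $\OO(-e-d)$ spanned by $R$) has rank $n-2$, degree $-(n-2)e-(2e-2-d)$, hence is forced to be balanced with splitting type $b_2,\dots,b_{n-1}$ determined by $d,e,n$; this is what makes the stratum a single splitting type rather than a union.

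The main steps, in order, would be: (1) Show $\M(b_\bullet(d))$ is nonempty and of the expected dimension at a single point, using Sacchiero's construction (Lemma~\ref{lem-Sacchiero} / Corollary~\ref{cor-examples}) with $\delta_1=1$, $\delta_2=d-1$, and the remaining $\delta_i$ chosen to make the higher $b_i$ balanced; the hypothesis $2e-2\ge d(n-1)+n-2$ is exactly what guarantees that after extracting one factor of size $d$ the rest can be split into $n-2$ integers each $\ge d$, i.e. the numerics are consistent. (2) Parametrize: given a degree-$d$ rational curve $g=(a_0:\dots:a_n):\PP^1\to\PP^{n*}$, nondegenerate, the space of $f$ (mod scalars) with $\sum a_i\,\partial_s f_i=\sum a_i\,\partial_t f_i=0$ is, by Lemma~\ref{lem-basicpolynomial} and Euler's relation, governed by the linear system $\sum a_i f_i=0$; its dimension can be computed uniformly in terms of the splitting type of $g^*T_{\PP^n}$ by Ramella's theorem (the generic $g$ has balanced $g^*T_{\PP^n}$), giving a fibration of the (open dense part of the) incidence variety over an irreducible base with irreducible fibres of constant dimension. (3) Conclude that the total space is irreducible, hence so is its image, the open locus of $\M(b_\bullet(d))$ where the minimal relation is nondegenerate; then argue that the degenerate-relation locus, if nonempty, lies in the closure, or has strictly smaller dimension, so it does not give an extra component — this uses a semicontinuity/specialization argument on the relation curve $g$.

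The hard part will be step (2)--(3): controlling exactly which $f$ arise from a given relation $g$, and in particular checking that for generic $g$ the resulting $f$ genuinely has normal bundle with \emph{no} relation of degree $<d$ (nondegeneracy, already handled by the corollary to Lemma~\ref{lem-basicpolynomial}) and no \emph{second} independent relation of degree $d$ forcing a more special splitting — i.e. that the open stratum is actually hit. Equivalently, one must show that the subbundle of $N_f^*$ generated by the chosen degree-$d$ relation is saturated and that the rank-$(n-2)$ quotient is balanced for generic $f$ in the fibre; this is a genericity statement about the $(n-2)\times(n+1)$ matrix of higher relations, and proving it requires either a direct Sacchiero-style minor computation at one well-chosen point (upgrading Corollary~\ref{cor-examples} to a whole family) or a deformation argument showing balancedness is open and nonempty in the fibre. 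Once irreducibility of the incidence variety and generic balancedness are in hand, the dimension count matches the expected dimension automatically because the fibre dimension over the (expected-dimension, by Ramella) space of relations is computed by an Euler characteristic, so no summand of $N_f$ is "rigid" beyond what the relation forces. I would also remark, following the sentence preceding the theorem in the excerpt, that for $\PP^3$ this recovers Eisenbud--Van de Ven and Ghione--Sacchiero verbatim since there $n-2=1$ and "balanced of rank one" is vacuous.
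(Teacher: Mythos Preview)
The paper does not actually prove this theorem: it is quoted from Miret \cite{Miret}, accompanied only by the remark that ``Eisenbud and Van de Ven's and Ghione and Sacchiero's proofs of irreducibility for $\PP^3$ generalize with little change.'' So there is no proof in the paper to compare your proposal against; your sketch is in fact an attempt to flesh out precisely that one-line remark, and in broad outline it does follow the Eisenbud--Van de Ven/Ghione--Sacchiero/Miret strategy (incidence correspondence over the space of degree-$d$ relations, irreducible fibres of constant dimension, a Sacchiero example to pin down nonemptiness and the generic splitting).

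One technical point in your step (2) deserves correction. The conditions $\sum_i a_i\,\partial_s f_i = \sum_i a_i\,\partial_t f_i = 0$ are \emph{not} equivalent to the single equation $\sum_i a_i f_i = 0$; by Lemma~\ref{lem-basicpolynomial} they are equivalent to $\sum_i f_i\,\partial_s a_i = \sum_i f_i\,\partial_t a_i = 0$, i.e.\ $f$ lies in the kernel of $\partial a$. Hence the fibre of $\pi_1$ over a relation $a$ is $\PP H^0(N_a^*(e+d))$, and its dimension is governed by the splitting of $N_a$ (the normal bundle of the relation curve), not by $a^*T_{\PP^n}$ and Ramella's theorem. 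This does not derail the argument --- for generic $a$ the bundle $N_a$ is balanced by Sacchiero's theorem, so the fibre dimension is constant on a dense open set --- but the bookkeeping and the invocation of the correct genericity result need to be adjusted accordingly.
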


\section{Monomial curves}\label{sec-monomial}
In \S \ref{sec-prelim}, we saw that computing the normal bundle $N_f$ corresponds to determining the kernel of the map $\partial f$. In general, this is a hard linear algebra problem. However,  for monomial maps there is a simple way of reading off the normal bundle from the terms in the sequence.  Since monomial maps provide useful examples, we describe the procedure in detail here.  Our approach appears to be easier than that of Alzati, Re, and Tortora in \cite{AlzatiRe2}.

First, it is easy to decide when monomial maps are unramified.

\begin{lemma}
Let $k_n=0 < k_{n-1} < \cdots < k_1 < k_0=e$ be a sequence of increasing integers.  Let $f = (s^{e}, s^{k_1}t^{e-k_1}, \dots, s^{k_{n-1}} t^{e-k_{n-1}}, t^{e})$ be a monomial map.  Then $f$ is unramified if and only if $k_1=e-1$ and $k_{n-1} = 1$, which implies that the image of $f$ is smooth.
\end{lemma}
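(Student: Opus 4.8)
The plan is to use the criterion recalled just above: $f$ is unramified exactly when the map $\partial f\colon \OO_{\PP^1}(-e)^{n+1}\to\OO_{\PP^1}(-1)^2$ is surjective, and a map of vector bundles on $\PP^1$ is surjective if and only if the evaluated matrix has maximal rank (here rank $2$) at every point. So the whole statement reduces to a pointwise rank computation. For the monomial map the relevant partials are $\partial_s f_0 = e s^{e-1}$, $\partial_t f_0 = 0$, $\partial_s f_n = 0$, $\partial_t f_n = e t^{e-1}$, and, for $1\le i\le n-1$,
\[ \partial_s f_i = k_i\, s^{k_i-1}t^{e-k_i},\qquad \partial_t f_i = (e-k_i)\, s^{k_i}t^{e-k_i-1}. \]
First I would dispose of a point $p=[s_0:t_0]$ with $s_0t_0\neq 0$: the $2\times 2$ submatrix formed by columns $0$ and $n$ is the diagonal matrix with entries $e s_0^{e-1}$ and $e t_0^{e-1}$, which is invertible, so $\partial f$ already has rank $2$ at $p$. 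Hence $f$ can fail to be unramified only at $[1:0]$ or $[0:1]$.

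Next I would analyze $[1:0]$. Every $\partial_s f_i$ with $1\le i\le n-1$ carries a positive power of $t$ (since $0<k_i<e$), as does $\partial_s f_n$, so the first row of the evaluated matrix at $[1:0]$ is $(e,0,\dots,0)$. The second row has $i$-th entry $\partial_t f_i|_{[1:0]}$, which vanishes for $i=0$ and $i=n$, and for $1\le i\le n-1$ vanishes unless $e-k_i-1=0$; since the $k_i$ are strictly decreasing and $k_1<e$, such an index exists exactly when $k_1=e-1$. Thus the evaluated matrix has rank $2$ at $[1:0]$ if and only if $k_1=e-1$. Applying the substitution $s\leftrightarrow t$, which turns $f$ into the monomial map with exponents $e-k_{n-1}<\dots<e-k_1$ and swaps $[1:0]$ with $[0:1]$, the same computation shows rank $2$ at $[0:1]$ holds if and only if $e-k_{n-1}=e-1$, i.e. $k_{n-1}=1$. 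Combining the three cases gives the equivalence: $f$ is unramified if and only if $k_1=e-1$ and $k_{n-1}=1$.

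Finally I would deduce smoothness of $C=f(\PP^1)$ under these conditions. On the chart $\{s\neq 0\}\subset\PP^1$ with coordinate $u=t/s$, and the chart $\{x_0\neq 0\}\subset\PP^n$, the map $f$ is $u\mapsto(u^{e-k_1},u^{e-k_2},\dots,u^{e-k_{n-1}},u^e)$; since $e-k_1=1$, its first coordinate is $u$ itself, so it is the graph of a morphism $\mathbb{A}^1\to\mathbb{A}^{n-1}$, hence a closed immersion with smooth image, and that image is $C\setminus\{f([0:1])\}$ because $f_0,\dots,f_{n-1}$ vanish simultaneously only along $s=0$, so $[0:1]$ is the unique preimage of $f([0:1])=(0:\dots:0:1)$. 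Symmetrically, using $k_{n-1}=1$, the chart $\{t\neq 0\}$ maps isomorphically onto $C\setminus\{f([1:0])\}$. Since $\{x_0\neq 0\}$ and $\{x_n\neq 0\}$ cover $C$ (as $s^e$ and $t^e$ never vanish together), $C$ is smooth; in fact $f$ is an isomorphism onto $C$.

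I do not expect a real obstacle — every step is an explicit computation. The only point needing a little care is the last one, since an unramified birational morphism can in general have a nodal image; I avoid quoting a general criterion by exhibiting $f$ directly as a graph in each of the two coordinate charts, which simultaneously gives smoothness and the stronger conclusion that $f$ is a closed immersion.
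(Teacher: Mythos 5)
Your argument is correct. For the equivalence itself you do essentially what the paper does: both proofs reduce unramifiedness to the pointwise rank of the $2\times(n+1)$ matrix $\partial f$, use the columns $0$ and $n$ (or $0,1,n-1,n$) to handle points with $st\neq 0$, and observe for the converse that if no $k_i$ equals $e-1$ then the whole row of $t$-partials vanishes at $[1{:}0]$, with the case $k_{n-1}\neq 1$ by the symmetry $s\leftrightarrow t$; your organization by points $[1{:}0]$, $[0{:}1]$ versus the paper's organization by the cases $s\neq 0$, $t\neq 0$ is only cosmetic. Where you genuinely diverge is the final smoothness claim: the paper disposes of it in one line by projecting to the curve $(s^e,s^{e-1}t,st^{e-1},t^e)$ in $\PP^3$, quoting its smoothness and the (implicit) principle that a curve with a smooth birational linear projection is itself smooth, i.e.\ that the parameterization factoring through an isomorphism onto the projection forces $f$ to be a closed immersion. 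You instead prove this directly: since $e-k_1=1$ (resp.\ $k_{n-1}=1$), on the chart $x_0\neq 0$ (resp.\ $x_n\neq 0$) the map is the graph $u\mapsto(u,u^{e-k_2},\dots,u^e)$ of a morphism $\mathbb{A}^1\to\mathbb{A}^{n-1}$, hence a closed immersion with smooth image, and the two charts cover $C$. Your route is more self-contained (it does not presuppose smoothness of the $\PP^3$ model nor the projection principle) and yields the slightly stronger statement that $f$ is an isomorphism onto its image; the paper's route is shorter at the cost of leaving those two ingredients to the reader. Either way the lemma is fully proved.
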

\begin{proof}
First, we show that a map with $k_1=e-1$ and $k_{n-1} = 1$ is unramified.  Consider the matrix of partials coming from only considering $f_0, f_1, f_{n-1}$ and $f_n$.  We have
\[ \left[ \begin{array}{llll}
es^{e-1} & (e-1)s^{e-2}t & t^{e-1} & 0 \\
0 & s^{e-1} & (e-1)st^{e-2} & et^{e-1}
\end{array} \right] .\]
We see that if $s \neq 0$, the first two columns are independent, and if $t \neq 0$, the last two columns are independent, so the map is unramified.  Moreover, the image of $f$ is smooth, since the curve $(s^e, s^{e-1}t, st^{e-1}, t^e)$ is smooth and is a projection of the image of $f$.

Now suppose $k_1 \neq e-1$.  We show $f$ is ramified (the case $k_{n-1} \neq 1$ follows by symmetry).  If $k_1 \neq e-1$, then we see that $t$ divides $\partial_t f_i$ for every $i = 0, \dots n$.  Thus, at the point $t=0$, the curve  is ramified, since $\partial_t f$ is identically zero.  This completes the proof.
\end{proof}

\begin{theorem}
\label{monomialSplitting}
Let $k_n=0 < 1=k_{n-1} < \cdots < k_1 = e-1 < k_0=e$ be a sequence of increasing integers.  Let $f = (s^{k_0}, s^{k_1}t^{e-k_1}, \dots, t^{e-k_{n}})$ be an unramified map  whose coordinates are given by monomials.  Then 
\[ N_f \cong \bigoplus_{i=1}^{n-1} \OO(e+b_i) \] 
where 
\[ b_i = k_{i-1}-k_{i+1}. \]
\end{theorem}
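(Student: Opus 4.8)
The plan is to compute the kernel of the map $\partial f \colon \OO(-e)^{n+1} \to \OO(-1)^2$ directly, exhibiting $n-1$ explicit relations among the columns and showing they generate the kernel. For the monomial map $f = (s^{k_0}, s^{k_1}t^{e-k_1}, \dots, s^{k_{n-1}}t^{e-k_{n-1}}, t^{e})$, the columns of $\partial f$ are
\[ \begin{pmatrix} k_i\, s^{k_i-1}t^{e-k_i} \\ (e-k_i)\, s^{k_i}t^{e-k_i-1} \end{pmatrix}, \qquad 0 \le i \le n. \]
First I would write down, for each $1 \le i \le n-1$, the three-term relation $R_i$ supported on columns $i-1$, $i$, $i+1$ with entries
\[ a_{i-1} = (k_i - k_{i+1})\, t^{k_{i-1}-k_{i+1}}, \quad a_i = -(k_{i-1}-k_{i+1})\, s^{k_{i-1}-k_i} t^{k_i - k_{i+1}}, \quad a_{i+1} = (k_{i-1}-k_i)\, s^{k_{i-1}-k_{i+1}}, \]
exactly as in the proof of Lemma~\ref{lem-Sacchiero} (this is the specialization of Corollary~\ref{cor-examples} to the monomial case, with $\delta_j = k_{j-1} - k_j$). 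One checks by direct substitution that $R_i$ annihilates the columns of $\partial f$: each of the two coordinates of $\sum_j a_j (\text{column }j)$ is a sum of three monomials whose coefficients cancel using $(k_i-k_{i+1}) - (k_{i-1}-k_{i+1}) + (k_{i-1}-k_i) = 0$ together with the Euler-type identity relating $k_j$ and $e - k_j$. Here $R_i$ has degree $b_i := k_{i-1}-k_{i+1}$, so it gives a map $\OO(-e-b_i) \to \OO(-e)^{n+1}$ landing in $N_f^*$.

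Next I would assemble the $R_i$ into a single map $R \colon \bigoplus_{i=1}^{n-1} \OO(-e-b_i) \to \OO(-e)^{n+1}$ and argue that its image is a subbundle of rank $n-1$, hence all of $N_f^*$ by a degree count. The matrix $R$ is $(n-1)\times(n+1)$, tridiagonal in shape. To see the rank is everywhere $n-1$ it suffices to exhibit two maximal minors with no common zero on $\PP^1$: deleting the first two columns leaves a lower-triangular matrix whose determinant is (up to a nonzero scalar) a power of $s$, and deleting the last two columns leaves an upper-triangular matrix whose determinant is a power of $t$ — these never vanish simultaneously. Since $\sum_{i=1}^{n-1} b_i = \sum_{i=1}^{n-1}(k_{i-1}-k_{i+1}) = k_0 + k_1 - k_{n-1} - k_n = e + (e-1) - 1 - 0 = 2e-2$, the bundle $\bigoplus_i \OO(-e-b_i)$ has the same rank and degree as $N_f^*$, so the injection $R$ with subbundle image is an isomorphism. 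This gives $N_f \cong \bigoplus_{i=1}^{n-1}\OO(e+b_i)$ with $b_i = k_{i-1}-k_{i+1}$.

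The one point requiring care is the subbundle/rank claim: I must confirm that the two triangular minors really are monomials in $s$ alone and $t$ alone (so coprime), which amounts to checking that the diagonal entries are $\pm s^{k_{i-1}-k_i}$ (resp. $\pm t^{k_i - k_{i+1}}$) times nonzero constants $(k_{i-1}-k_i)$, $(k_i-k_{i+1})$ — these are nonzero precisely because the $k_j$ are strictly decreasing, which is part of the hypothesis. A secondary check is that the extension $0 \to \bigoplus_{i=1}^{n-2}\OO(-e-b_i) \to N_f^* \to \OO(-e-b_{n-1}) \to 0$ produced this way splits; but since $b_{\bullet}$ here is increasing, $b_{n-1} = k_0 - k_2 \geq b_i$ for all $i$, so $b_{n-1} \geq \max_i b_i - 1$ and the argument at the end of Lemma~\ref{lem-Sacchiero} applies verbatim. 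Thus the main obstacle is purely bookkeeping — organizing the coefficient cancellations in the relations $R_i$ and identifying the triangular minors — rather than any conceptual difficulty, since the monomial structure makes every computation transparent.
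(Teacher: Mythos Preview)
Your argument is essentially identical to the paper's: exhibit the same three-term relations $R_1,\dots,R_{n-1}$, assemble them into $R$, and verify full rank via the two triangular maximal minors (a pure power of $s$ and a pure power of $t$), whence $R$ is an isomorphism onto $N_f^*$ by rank and degree. The final ``secondary check'' paragraph is superfluous and slightly garbled --- you already have all $n-1$ relations (not $n-2$ as in Lemma~\ref{lem-Sacchiero}), $b_{n-1}=k_{n-2}-k_n$ rather than $k_0-k_2$, and the $b_i$ need not be monotone --- but none of this affects the main argument, which is complete without it.
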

\begin{proof}
The proof of this theorem is similar to, and in fact easier than, the proof of Lemma \ref{lem-Sacchiero}.  Computing the normal bundle is equivalent to computing the kernel of the map $$\partial f : \OO(-e)^{n+1} \to \OO(-1)^2.$$  Hence, we would like to find generators for the relations among the columns of $\partial f$. First, we exhibit $n-1$ independent relations among the $(\partial_s f_j, \partial_t f_j)$.  Each relation $R_i = (a_0, \dots, a_n)$ only has three nonzero terms, $a_{i-1}$, $a_i$ and $a_{i+1}$ for $1 \leq i \leq n-1$.   Then $$a_{i-1} = (k_i-k_{i+1})t^{k_{i-1}-k_{i+1}}, \quad a_i = - (k_{i-1}-k_{i+1})s^{k_{i-1}-k_i}t^{k_i-k_{i+1}}, \quad a_{i+1} = (k_{i-1}-k_i)s^{k_{i-1}-k_{i+1}},$$ and $a_j = 0$ for $j \neq i-1, i, i+1$ is a relation, since it is easily checked that
\[ a_{i-1} \left[ \begin{array}{c}  k_{i-1} s^{k_{i-1}-1}t^{e-k_{i-1}} \\ (e-k_{i-1})s^{k_{i-1}} t^{e-k_{i-1}-1} \end{array} \right]  
+ a_i  \left[ \begin{array}{c} k_is^{k_i-1}t^{e-k_i} \\ (e-k_i)s^{k_i}t^{e-k_i-1} \end{array} \right]  + a_{i+1} \left[ \begin{array}{c} k_{i+1}s^{k_{i+1}-1}t^{e-k_{i+1}} \\ (e-k_{i+1})s^{k_{i+1}}t^{e-k_{i+1}-1} \end{array} \right]  = 0
\]

Let $R$ be the matrix whose rows are the relations $R_i$.  Note that $R_i$ consists of a row of homogeneous polynomials of degree $b_i = k_{i-1} - k_{i+1}$ for $1 \leq i \leq n-1$.
Consequently, the matrix $R$ defines a map $$\bigoplus_{i=1}^{n-1} \OO(-e-b_i) \stackrel{R}{\longrightarrow} \OO(-e)^{n+1},$$ whose image is in the kernel of $\partial f$. Therefore, the map $R$ factors through the inclusion
$$0 \longrightarrow N_f^* \longrightarrow \OO(-e)^{n+1}.$$ 
Next, we claim that $R$ has rank $n-1$ at every point of $\PP^1$, hence induces an isomorphism $$\bigoplus_{i=1}^{n-1} \OO(-e-b_i) \cong N_f^*.$$ The theorem easily follows.
The $(n-1) \times (n-1)$ minors of $R$ obtained by omitting the first two columns and the last two columns are easy to compute and are given by
$$\prod_{i=1}^{n-1}  (k_{i-1} - k_{i}) s^{k_{i-1}-k_{i+1}} \quad \mbox{and} \quad \prod_{i=1}^{n-1}(k_i - k_{i+1}) t^{k_{i-1}-k_{i+1}},$$ respectively. Since these minors do not simultaneously vanish, we conclude that $R$ has rank $n-1$ at every point of $\PP^1$. 
\end{proof}

Using the same technique, we can  also compute the restricted tangent bundle of a curve generated by monomial ideals. The Euler sequence 
$$0 \longrightarrow f^* \Omega_{\PP^n} \longrightarrow \OO(-e)^{n+1} \stackrel{f}{\longrightarrow} \OO \longrightarrow 0$$ exhibits $f^* \Omega_{\PP^n}$ as the kernel of the map defined by $f$. The columns of $f$ satisfy the $n$ relations given by $$ t^{k_{i-1}- k_i} f_{i-1} - s^{k_{i-1}-k_i} f_i=0.$$ The argument in the proof of Theorem \ref{monomialSplitting} allows us to conclude the following proposition. 

\begin{proposition}
Let $k_n=0 < k_{n-1} < \cdots < k_1 < k_0=e$ be a sequence of increasing integers. Let $f = (s^{k_0 }, s^{k_1}t^{e-k_1}, \dots, t^{e-k_{n} })$ be an unramified monomial map.  Then  
\[  f^* T_{\PP^n} \cong \bigoplus_{i=1}^{n} \OO(e+c_i), \] 
where 
\[ c_i = k_{i-1}-k_{i}. \]
\end{proposition}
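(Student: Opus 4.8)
The plan is to mimic, essentially verbatim, the argument just given for the normal bundle in Theorem \ref{monomialSplitting}, replacing the matrix $\partial f$ by the $1\times(n+1)$ "matrix" whose entries are the coordinates $f_j = s^{k_j}t^{e-k_j}$, and the kernel computation by the Euler sequence $0\to f^*\Omega_{\PP^n}\to\OO(-e)^{n+1}\stackrel{f}{\to}\OO\to 0$, which exhibits $f^*\Omega_{\PP^n}$ as the kernel of the map given by $f$. Dualizing, $f^*T_{\PP^n}$ is the dual of that kernel, so it suffices to identify the kernel of $f\colon\OO(-e)^{n+1}\to\OO$ explicitly as $\bigoplus_{i=1}^n\OO(-e-c_i)$ with $c_i=k_{i-1}-k_i$; the stated conclusion for $f^*T_{\PP^n}$ then follows by dualizing.

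First I would write down the $n$ relations among the $f_j$ that the excerpt already records: for $1\le i\le n$, the relation $R_i$ with only two nonzero entries, $a_{i-1}=t^{k_{i-1}-k_i}$ and $a_i=-s^{k_{i-1}-k_i}$, which is an identity because $t^{k_{i-1}-k_i}\,s^{k_{i-1}}t^{e-k_{i-1}} - s^{k_{i-1}-k_i}\,s^{k_i}t^{e-k_i}=0$. Since $R_i$ has entries of degree $c_i=k_{i-1}-k_i$, assembling the $R_i$ into an $n\times(n+1)$ matrix $R$ gives a map $\bigoplus_{i=1}^n\OO(-e-c_i)\stackrel{R}{\to}\OO(-e)^{n+1}$ whose image lands in $\ker(f)=f^*\Omega_{\PP^n}$; hence $R$ factors through the inclusion $f^*\Omega_{\PP^n}\hookrightarrow\OO(-e)^{n+1}$. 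Note $\sum_{i=1}^n c_i = k_0-k_n = e$, so $\bigoplus\OO(-e-c_i)$ has the same rank $n$ and the same degree $-ne-e = -(n+1)e$ as $f^*\Omega_{\PP^n}$, so a fibrewise rank check upgrades this to an isomorphism.

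The remaining step is exactly the minor computation as in the proof of Theorem \ref{monomialSplitting}: $R$ is bidiagonal (entries $t^{c_i}$ on one diagonal, $-s^{c_i}$ just below), so the maximal minor omitting the first column is $\prod_{i=1}^n(-s^{c_i})=\pm\, s^{e}$ and the maximal minor omitting the last column is $\prod_{i=1}^n t^{c_i}=t^{e}$; these two never vanish simultaneously on $\PP^1$, so $R$ has rank $n$ at every point. Therefore $R$ is an injection of bundles with locally free cokernel of rank $0$ and degree $0$, i.e.\ an isomorphism onto $f^*\Omega_{\PP^n}$, giving $f^*\Omega_{\PP^n}\cong\bigoplus_{i=1}^n\OO(-e-c_i)$ and $f^*T_{\PP^n}\cong\bigoplus_{i=1}^n\OO(e+c_i)$ upon dualizing.

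I expect no serious obstacle here: unlike the normal bundle case, where one needed the auxiliary bound $b_{n-1}\ge\max_i b_i - 1$ to rule out a nonsplit extension, here $R$ is already square-ish and visibly of full rank everywhere, so the identification is immediate and no hypothesis on the ordering or size of the $c_i$ is needed (beyond $k_n=0<k_{n-1}<\cdots<k_0=e$, which guarantees the $c_i$ are positive and the $f_j$ are genuine degree-$e$ monomials without common factor). The only mild point to be careful about is bookkeeping of signs and of the degree count $\sum c_i = e$, which makes the cokernel of $R$ have degree zero and hence — being also rank zero and a quotient of a locally free sheaf by a subbundle — vanish.
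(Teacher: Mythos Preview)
Your proof is correct and follows exactly the approach the paper indicates: use the Euler sequence to identify $f^*\Omega_{\PP^n}$ with the kernel of $f$, write down the $n$ bidiagonal relations $t^{c_i}f_{i-1}-s^{c_i}f_i=0$, and then check the two corner $n\times n$ minors $\pm s^e$ and $t^e$ to see the relation matrix has full rank everywhere. If anything you have spelled out more detail than the paper does, since the paper simply says ``the argument in the proof of Theorem \ref{monomialSplitting} allows us to conclude the following proposition.''
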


We conclude this section with a short discussion of ramified monomial maps.  If $f$ is ramified, then $N_{f}$ has both a torsion part and a free part.  Taking duals and repeating the argument from the smooth case, we see that $N_{f}^{*}$ is the kernel of the map $\OO(-e)^{n+1} \to \OO(-1)^2$ given by the partials of $f$, only now the map has a cokernel corresponding to the torsion sheaf $\Ext^1(N_{f}, \OO)$.  Our calculation in Theorem \ref{monomialSplitting} still works in this case for computing the splitting type of $N_{f}^{*}$.

\section{Dimensions of Components}\label{sec-conics}

In this section we prove many of the main results of the paper. We will introduce a natural incidence correspondence $\aA$ that parameterizes maps $f$ together with associated syzygy relations that determine $N_f$. There are natural projection maps $\pi_1$ and $\pi_2$ from $\aA$ to  the space of syzygies and to $\M(b_{\bullet})$. We describe the fibers of $\pi_1$ for certain syzygies and compute the dimensions of these fibers. This allows us to exhibit different components of the incidence correspondence $\aA$. For each component of $\aA$, we will then exhibit an example where the fiber dimension of $\pi_2$ is equal to an a priori lower bound. This will show that $\pi_2$ is relatively flat of relative dimension equal to the a priori lower bound and will compute the dimension of this component of $\M(b_{\bullet})$.  

We start by working out the expected dimension of $\M(b_{\bullet}(d^k))$ and proving an a priori lower bound on the dimension.

\begin{lemma}
\label{expCodimLemma}
Assume that $2e \geq (d+1) (n-1) -k +2$. Then the expected codimension of $\M(b_{\bullet}(d^k))$ in $\Mor_e(\PP^1, \PP^n)$ is $k(2e+1+k) - (d+1)nk$. This is an upper bound for the codimension of every component of  $\M(b_{\bullet}(d^k))$.
\end{lemma}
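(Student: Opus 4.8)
The plan is to compute $h^1(\End(N_f))$ directly for $N_f = \OO(e+d)^k \oplus E$, where $E = \bigoplus_{i=k+1}^{n-1}\OO(e+b_i)$ is the balanced bundle of rank $n-1-k$ and degree $(n-1-k)e + (2e-2-dk)$ specified by $b_\bullet(d^k)$, and then to invoke the cited result that the expected codimension is an upper bound for the codimension of every component. The upper-bound assertion is exactly \cite[Lemma 2.4]{Coskun} applied to the versal deformation space of $N_f$: the locus of bundles with splitting type at least as special as that of $N_f$ has codimension $h^1(\End(N_f))$, and since the normal bundle gives a map from a neighborhood in $\Mor_e(\PP^1,\PP^n)$ to this deformation space, the preimage $\M(b_\bullet(d^k))$ has codimension at most $h^1(\End(N_f))$ in $\Mor_e$; this is already explained in the introduction of the excerpt, so I would simply cite it. The real content is the dimension count.

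First I would set up the formula $h^1(\End(N_f)) = \sum_{\{i,j \,:\, a_i - a_j \le -2\}} (a_j - a_i - 1)$ from the Preliminaries, where the $a_i$ run over all $n-1$ summand degrees of $N_f$. I split the sum according to whether the pair of indices lies inside the $\OO(e+d)^k$ block, inside the balanced block $E$, or straddles the two. The within-$E$ contribution vanishes because $E$ is balanced (consecutive $b_i$ differ by at most $1$). The within-block contribution of $\OO(e+d)^k$ also vanishes. So the entire sum comes from pairs $(i,j)$ with $a_i = e+d$ (one of the $k$ copies) and $a_j = e + b_\ell$ for $\ell \ge k+1$; this contributes a nonzero term precisely when $b_\ell \ge d+2$, i.e.\ $d \le b_\ell - 2$. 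Under the hypothesis $2e \ge (d+1)(n-1) - k + 2$ one checks that $q \ge d+1$ (this is the inequality $2e-2-dk \ge (d+1)(n-1-k)$, equivalent to the hypothesis after rearranging, which forces the average value $\frac{2e-2-dk}{n-1-k}$ of the balanced block to be at least $d+1$, hence $q \ge d+1$ and every $b_\ell \ge d+1$); so in fact $b_\ell \in \{q, q+1\}$ with $q \ge d+1$, and every cross pair contributes. I would handle the borderline $q = d+1$ case with a remark that it only affects which terms appear, not the final closed form, since $a_j - a_i - 1 = b_\ell - d - 1$ and this is still computed correctly by the telescoping below.

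The cross sum is therefore $\sum_{i=1}^{k}\sum_{\ell=k+1}^{n-1} (b_\ell - d - 1) = k\big(\sum_{\ell=k+1}^{n-1} b_\ell - (d+1)(n-1-k)\big)$. Now $\sum_{\ell=k+1}^{n-1} b_\ell = (2e-2) - dk$ since $\sum_i b_i = 2e-2$ and the first $k$ terms equal $d$. Substituting gives $h^1(\End(N_f)) = k\big(2e - 2 - dk - (d+1)(n-1-k)\big) = k\big(2e - 2 - dk - (d+1)(n-1) + (d+1)k\big) = k\big(2e - 2 + k - (d+1)(n-1) + dk - dk + k \cdots\big)$; collecting terms carefully yields $k\big(2e - 2 + k - (d+1)(n-1)\big)$, and rewriting $(d+1)(n-1) = (d+1)n - (d+1)$ gives $k\big(2e + k + (d+1) - 2 - (d+1)n\big) = k(2e + k - 1 + d - (d+1)n)$... at this point I would just do the arithmetic cleanly and confirm it equals $k(2e + 1 + k) - (d+1)nk$, which is the claimed value; the routine step is matching constants $+k(d-1)$ vs $+k$, so I expect one sign-chase to pin down the exact hypothesis needed (likely $b_1 = d$ interacts with the ``$+1$'' via $b_\ell - d - 1$ summed, and the telescoping is clean once $q\ge d+1$).

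The main obstacle is purely bookkeeping: making sure the hypothesis $2e \ge (d+1)(n-1) - k + 2$ is exactly what guarantees $q \ge d+1$ so that \emph{every} cross pair contributes a nonnegative term counted by the simple formula $b_\ell - d - 1$ (rather than having to worry about pairs where $a_i - a_j$ is between $-1$ and $0$, which would drop out). If instead $q = d$ or $q = d+1$ with $r$ partial, a few cross terms would vanish or equal zero and I would need to verify the closed form still holds; the stated inequality is precisely the one ruling out the problematic range, so I would check that $2e - 2 - dk - (d+1)(n-1-k) \ge 0$ is equivalent to the hypothesis (it is: add $(d+1)(n-1-k) + dk$ to both sides of ``$\ge 0$'' and simplify). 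Everything else — the vanishing of the within-block sums by balancedness, the telescoping, and the final algebra — is mechanical.
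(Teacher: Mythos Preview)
Your approach is essentially the same as the paper's: both compute $h^1(\End(N_f))$ directly from the splitting $N_f \cong \OO(e+d)^k \oplus \OO(e+q)^{n-1-k-r} \oplus \OO(e+q+1)^r$, observe that only the cross terms between the $\OO(e+d)$ block and the balanced block contribute, and simplify using $\sum b_i = 2e-2$ (equivalently $q(n-1-k)+r = 2e-2-dk$). Your treatment of the borderline case $q=d+1$ is correct, and your verification that the hypothesis $2e \ge (d+1)(n-1)-k+2$ is exactly equivalent to $q \ge d+1$ is cleaner than what the paper writes down.

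The discrepancy you flagged in the final arithmetic is real, not an error on your part. Your computation gives
\[
h^1(\End(N_f)) \;=\; k\bigl((2e-2-dk) - (d+1)(n-1-k)\bigr) \;=\; k(2e+k+d-1) - (d+1)nk,
\]
and the paper's own proof, carried out carefully, yields the same expression. This agrees with the stated formula $k(2e+1+k)-(d+1)nk$ only when $d=2$; for general $d$ the constant term should be $d-1$ rather than $1$. (A quick check: for $n=5$, $k=1$, $d=3$, $e=10$ one has $N=\OO(13)\oplus\OO(15)^3$ and $h^1(\End(N))=3$, whereas the stated formula gives $2$.) Since the paper only invokes this lemma for $d=2$ (in the proof of Proposition~\ref{goodComp}), the typo is harmless there, but you should trust your arithmetic and record the corrected formula.

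For the upper bound on codimension, the paper simply appeals to the determinantal structure of $\M(b_\bullet(d^k))$; your deformation-theoretic phrasing via a map to the versal space is a slightly different packaging of the same semicontinuity argument and is fine, though it would need a sentence making the map precise.
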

\begin{proof}
The expected codimension is by definition $h^1(\End(N))$. Recall that $q$ and $r$ are defined by the expression
$$2e-2-dk = (n-k-1)q+r .$$ 
Since $$N \cong \OO(e+d)^{k} \oplus \OO(e+q+1)^r \oplus \OO(e+q)^{n-1-k-r},$$ we see that 
\begin{align*}
h^1(\End(N)) &= h^1\left(\OO (d-q-1)^{kr} \oplus \OO (d-q)^{k(n-1-k-r)}\right)& \\ &= kr(q-d) + k(n-1-k-r)(q-d-1).&
\end{align*}
Simplifying using the fact $2e-2-dk= q(n-1-k)+r$ yields the desired formula. The last statement follows from the fact that the loci  $\M(b_{\bullet}(d^k))$  are determinantal loci. 
\end{proof}

\subsection{Two Conics}
We now classify the components of $\M(b_{\bullet}(2^2))$.  The following definition will be central to our discussion. 

\begin{definition}
Let $\alpha$ and $\beta$ be $(n+1)$ tuples of polynomials, where the $\alpha_i$ all have the same degree and the $\beta_i$ all have the same degree. We refer to $\alpha$ and $\beta$ as \emph{unscaled parameterized curves}. Then $\alpha$ and $\beta$ satisfy the {\em parameterized tangency condition} if for some choice of parameters $s$ and $t$ on $\PP^1$, $\partial_s \alpha$ is a fixed polynomial multiple of $\partial_t \beta$. 
\end{definition}
The tuples $\alpha$ and $\beta$ give rise to a map from $\PP^1$ to $\PP^n$. If $\alpha$ and $\beta$ have the same degree and satisfy the parameterized tangency condition, then $\partial_s \alpha$ is a scalar multiple of $\partial_t \beta$.  If $\alpha$, $\beta$ give rise to nondegenerate conics that satisfy the parameterized tangency condition, then their planes intersect in at least a line. Furthermore, if they intersect in a line $\ell$, then both $\alpha$ and $\beta$ are tangent to $\ell$.

Let $\mathcal{G}$ denote the closure of the locus in $\M(b_{\bullet}(2^2))$ where $\partial f$ has  two independent degree two relations whose corresponding curves in $\PP^{n *}$ lie in disjoint planes. Let $\mathcal{PT}$ denote the closure of the locus in $\M(b_{\bullet}(2^2))$ where $\partial f$ has two independent degree two relations satisfying the parameterized tangency condition.

\begin{theorem}
\label{twoConicClassification}
For $n \geq 5$, $e \geq 2n-3$, $\M(b_{\bullet}(2^2))$ has precisely two components, $\mathcal{G}$ and $\mathcal{PT}$.  The dimension of $\mathcal{G}$ is the expected dimension $e(n-3)+7n-6,$ and the dimension of $\mathcal{PT}$ is $e(n-2)+5n-3$.
\end{theorem}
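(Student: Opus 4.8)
The strategy is to use the incidence correspondence $\aA$ announced at the start of \S\ref{sec-conics}: a point of $\aA$ should be a map $f \in \M(b_{\bullet}(2^2))$ together with a choice of two-dimensional space of degree-two syzygies among the columns of $\partial f$ (equivalently, a pencil of conics in $\PP^{n*}$ cut out by these relations), with projections $\pi_1$ to the space of such syzygy data and $\pi_2$ to $\M(b_{\bullet}(2^2))$. The first step is to show that $\M(b_{\bullet}(2^2))$ is set-theoretically $\mathcal{G} \cup \mathcal{PT}$: given $f$ with $N_f \cong \OO(e+2)^2 \oplus \OO(e+q)^{\bullet}\oplus\OO(e+q+1)^{\bullet}$, the two lowest syzygies give two degree-two relations, i.e.\ two conics $C_\alpha,C_\beta$ in $\PP^{n*}$; their planes either meet only at a point (or are disjoint), or meet in a line. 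In the first case we land in $\mathcal{G}$; in the second case, using Lemma~\ref{lem-basicpolynomial} applied to each relation, we must show the two conics are forced into the parameterized tangency condition, landing in $\mathcal{PT}$. This dichotomy, and the analysis of the degenerate (e.g.\ non-reduced or reducible conic) cases, is where I expect to spend real effort — one has to check that no extra component sneaks in when the conics degenerate, and that the loci as defined are genuinely the closures of what one wants.

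Next I would compute the dimensions of the two components via the incidence correspondence. For $\mathcal{G}$: the base of $\pi_1$ consists of a pair of disjoint planes in $\PP^{n*}$ (dimension $2(3(n-2)) = 6n-12$) together with a nondegenerate conic in each (dimension $8+8=16$, or $2\cdot(\dim\text{conics in }\PP^2)$, appropriately reduced for reparameterization — I'll track the $\PP GL(2)$ and scaling carefully), and then the fiber of $\pi_2$, i.e.\ reconstructing $f$ from the two syzygies: $f$ is determined up to the space of degree-$e$ tuples annihilated appropriately, and Lemma~\ref{lem-basicpolynomial} says $f$ itself satisfies relations dual to $\alpha$ and $\beta$. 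The key point, as the paragraph before the theorem promises, is to exhibit one explicit example — e.g.\ a monomial or Sacchiero-type curve from Corollary~\ref{cor-examples} — where the $\pi_2$-fiber achieves the a priori lower bound coming from the expected-codimension bound in Lemma~\ref{expCodimLemma}; then semicontinuity forces $\pi_2$ to be flat of that relative dimension over (a dense open of) $\mathcal{G}$, and the dimension count gives exactly $e(n-3)+7n-6$, matching the expected dimension from Lemma~\ref{expCodimLemma}. For $\mathcal{PT}$ one runs the same machine: the base now has the two planes meeting in a line (a codimension condition that drops $6n-12$ by some fixed amount) but the conics are constrained to be tangent to that line, and crucially the $\pi_2$-fiber is \emph{larger} than the expected one — this is exactly why $\mathcal{PT}$ has dimension $e(n-2)+5n-3$, larger than expected. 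Again I would pin down the fiber dimension by producing an explicit witness curve and invoking semicontinuity.

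The main obstacle, and the step I'd allocate the most care to, is the fiber analysis of $\pi_2$ — showing that reconstructing $f$ from a pencil of conic-syzygies has exactly the claimed dimension, and in particular that for $\mathcal{PT}$ the fiber jumps. Concretely: given $\alpha,\beta$ (the two relations), the set of $f$ realizing them is cut out inside $\Mor_e(\PP^1,\PP^n)$ by the linear conditions $\sum a_i \partial_s f_i = \sum a_i \partial_t f_i = 0$ for both relations; I need to compute the rank of this linear system, which by Lemma~\ref{lem-basicpolynomial} is governed by $\sum f_i \partial_s \alpha_i = \sum f_i \partial_t \alpha_i = 0$, i.e.\ by the geometry of the curves $\alpha,\beta$ traced in $\PP^{n*}$. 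When the planes are disjoint ($\mathcal{G}$) these impose the generic number of conditions; when the planes share a line and the tangency condition holds ($\mathcal{PT}$), the conditions become dependent, lowering the codimension by a controlled amount. Verifying this rank computation in general — rather than just at one lucky point — requires either the explicit-example-plus-semicontinuity argument (which I'd lean on) or a direct cohomological computation of the relevant $h^0$; I'd present the former as the clean route, with the example chosen so the arithmetic in Lemma~\ref{expCodimLemma} is transparent, and then close by checking irreducibility of each of $\mathcal{G}$ and $\mathcal{PT}$ (each fibers, with irreducible fibers, over an irreducible base of syzygy data) and that neither contains the other (compare dimensions: $e(n-2)+5n-3 > e(n-3)+7n-6$ for $e$ large, and $\mathcal{PT}\not\subseteq\mathcal{G}$ since generic members have coplanar-along-a-line syzygies which is a closed condition violated on $\mathcal{G}$).
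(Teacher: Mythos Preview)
Your overall architecture matches the paper's: incidence correspondence, case analysis on how the two planes of the conic relations sit in $\PP^{n*}$, explicit Sacchiero-type examples to pin down fiber dimensions, and a final comparison of the two components. Two things need fixing.

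First, you have the roles of $\pi_1$ and $\pi_2$ reversed in several places. The fiber whose dimension carries all the content is the fiber of $\pi_1$ over a fixed pair of conic relations (the space of $f$'s realizing them), computed via Lemma~\ref{lem-basicpolynomial}. The fiber of $\pi_2$ over $f\in\M(b_\bullet(2^2))$ is just the set of ordered bases of the $2$-dimensional space of conic syzygies, so has dimension $4$ identically; it does not jump on $\mathcal{PT}$. What jumps on $\mathcal{PT}$ is the $\pi_1$-fiber: the parameterized tangency condition makes one of the four linear constraints $\sum f_i\,\partial_\ell a_{j,i}=0$ redundant, so the codimension drops from $4(e+2)$ to $3(e+2)$. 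Your narrative (``conditions become dependent, lowering the codimension'') is right, but attributed to the wrong projection; straightening this out is necessary for the arithmetic to come out to $e(n-2)+5n-3$.

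Second, and more seriously, you collapse the trichotomy into a dichotomy. You write that the planes ``either meet only at a point (or are disjoint), or meet in a line'' and that in the first case we land in $\mathcal{G}$. The disjoint case is $\mathcal{G}$ essentially by definition, but the case where the planes meet in exactly one point is not obviously in $\mathcal{G}$: the base locus $\mathcal{P}\subset\mathcal{C}^2$ is in the closure of the disjoint locus, but the $\pi_1$-fiber could (and in a codimension-$2$ sublocus of $\mathcal{P}$ does) jump, so $\pi_1^{-1}(\mathcal{P})$ might contain an extra component of $\aA$ not lying in $\Gamma$. In the paper this is handled by a separate computation (Lemma~\ref{lem-conicsonepoint}): one bounds the $\pi_1$-fiber dimension over every point of $\mathcal{P}$, including the jumping locus, and then a dimension count (Corollary~\ref{twoConicsCor}) shows that even with the jump, $\pi_2(\pi_1^{-1}(\mathcal{P}))$ is too small to contain a component of $\M(b_\bullet(2^2))$, hence lies in $\mathcal{G}$. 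You need to insert this analysis; without it the claim that there are \emph{precisely} two components is unproven.
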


The proof of the theorem involves a detailed case-by-case analysis of the types of conic relations that can occur among the columns of $\partial f$.  We start by showing that if $f$ is nondegenerate and unramified, then the relations cannot define degenerate conics. 

\begin{lemma}\label{lem-line}
If  $\partial f$ satisfies a degree two relation that defines a two-to-one map to a line, then $f$ is degenerate.
\end{lemma}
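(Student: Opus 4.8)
The plan is to argue by contradiction: suppose $\partial f$ satisfies a degree two relation $R = (a_0, \dots, a_n)$ whose associated parameterized curve $\alpha = (a_0, \dots, a_n)$ has degree two but maps two-to-one onto a line in $\PP^{n*}$, and derive that $f$ must be degenerate. Since $\alpha$ parameterizes a line via a degree two map, after a linear change of coordinates on $\PP^n$ we may write $\alpha = (g_0, g_1, 0, \dots, 0)$ where $g_0, g_1$ are degree two forms in $s,t$ spanning a two-dimensional space; in fact, because the map is two-to-one, we can take $g_0, g_1$ to be (a basis of) the pullback of linear forms under a degree two map $\PP^1 \to \PP^1$, e.g. $g_0 = \ell(s,t)^2$ and $g_1 = \ell'(s,t)^2$ for two distinct linear forms, or more uniformly we may change coordinates on the target line so that $\alpha = (u^2, v^2, 0, \dots, 0)$ where $u = u(s,t)$, $v = v(s,t)$ are independent linear forms (the Veronese). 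The relation $R$ then reads $a_0 \partial_j f_0 + a_1 \partial_j f_1 = 0$ for $j \in \{s,t\}$, only involving $f_0$ and $f_1$.

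The key step is to apply Lemma \ref{lem-basicpolynomial}: from $\sum_i a_i \partial_j f_i = 0$ we get $\sum_i f_i \partial_j a_i = 0$, which here becomes $f_0 \partial_j a_0 + f_1 \partial_j a_1 = 0$ for $j \in \{s,t\}$, i.e. $f_0 \partial_j g_0 + f_1 \partial_j g_1 = 0$. Combined with Euler's relation $2 g_0 f_0 + 2 g_1 f_1 = 0$ (degree two forms are Euler-homogeneous of weight $2$), this says the $2 \times 2$ matrix $\begin{pmatrix} \partial_s g_0 & \partial_s g_1 \\ \partial_t g_0 & \partial_t g_1 \end{pmatrix}$ — together with the row $(g_0, g_1)$ — annihilates the column $(f_0, f_1)$. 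I would argue that this $2\times 3$ system (three linear equations over the function field) has rank $2$: the forms $g_0, g_1$ span a pencil with no common root (since $\alpha$ is a genuine conic, not a constant map), and one checks that the Jacobian-type $2\times2$ minors of $(g_0,g_1)$ together with the Euler row cannot all vanish identically. Hence $(f_0, f_1)$ lies in the rank-one kernel, so $f_0$ and $f_1$ are proportional — and since $f_0, f_1$ have no common factor (they are coordinates of $f$), this forces $f_0, f_1$ to be scalar multiples of each other, giving a linear relation among the coordinates of $f$. Therefore $f$ is degenerate, the desired contradiction.

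I expect the main obstacle to be the bookkeeping in the normal form for $\alpha$ and the rank computation: one must handle the degenerate sub-cases where the two-to-one map to the line has a ramification point at $s=0$ or $t=0$, or where $g_0, g_1$ share structure that makes the $2\times 3$ matrix drop rank. The cleanest route is probably to choose coordinates so that $\alpha = (u^2, v^2, 0, \dots, 0)$ with $\{u,v\}$ a basis of linear forms, compute $\partial_s u^2 = 2u\partial_s u$ etc. explicitly, and observe directly that $f_0 u \,\partial_s u + f_1 v\, \partial_s v = 0$, $f_0 u\, \partial_t u + f_1 v\, \partial_t v = 0$, so (since $\partial_s u, \partial_t u$ are nonzero constants and $u, v$ share no root) we get $u \mid f_1$ and $v \mid f_0$; writing $f_0 = v f_0'$, $f_1 = u f_1'$ and substituting back yields $f_0' u + f_1' v = 0$ hence $u \mid f_0'$ and $v \mid f_1'$, so $uv \mid f_0$ and $uv \mid f_1$ — iterating, or directly using that $f_0 f_1$ must be divisible by arbitrarily high powers, forces $f_0 = f_1 = 0$ unless they were proportional to begin with. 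Since Euler's relation already pins the ratio $f_0/f_1 = -g_1/g_0$, the pair $(f_0,f_1)$ is determined up to scalar by a fixed rational function, forcing proportionality and hence degeneracy of $f$.
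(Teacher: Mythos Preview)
Your overall strategy---normalize the relation so that only two coordinates are involved, then apply Lemma~\ref{lem-basicpolynomial}---is exactly what the paper does, but you are working much harder than necessary and both of your sketched routes contain errors.

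The paper's proof is one line: after changing coordinates on $\PP^n$ \emph{and on $\PP^1$}, any degree-two map that is two-to-one onto a line can be written as $a = (s^2, t^2, 0, \dots, 0)$. Then Lemma~\ref{lem-basicpolynomial} gives $\sum_i f_i \,\partial_s a_i = 2s f_0 = 0$, so $f_0 = 0$ and $f$ is degenerate. You never use the freedom to reparameterize the source $\PP^1$, which is why you end up carrying around general linear forms $u,v$ and a rank analysis.

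There are also two specific gaps in what you wrote. In your first approach, the assertion that ``$f_0, f_1$ have no common factor (they are coordinates of $f$)'' is false: only the full tuple $(f_0,\dots,f_n)$ is assumed to have no common factor, and any two of them may well share one. So from $g_0 f_0 + g_1 f_1 = 0$ alone you cannot conclude $k$-proportionality of $f_0,f_1$. In your second approach, the substitution is miscomputed: plugging $f_0 = v f_0'$, $f_1 = u f_1'$ into $f_0\, u\,\partial_s u + f_1\, v\,\partial_s v = 0$ gives $uv\,(f_0' \,\partial_s u + f_1' \,\partial_s v) = 0$, hence $f_0' \,\partial_s u + f_1' \,\partial_s v = 0$, not ``$f_0' u + f_1' v = 0$''. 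This is actually good news: together with the analogous $\partial_t$-equation you get a $2\times 2$ system with constant coefficient matrix $\bigl(\begin{smallmatrix}\partial_s u & \partial_s v\\ \partial_t u & \partial_t v\end{smallmatrix}\bigr)$, whose determinant is nonzero since $u,v$ are independent linear forms. Hence $f_0' = f_1' = 0$ immediately, with no iteration needed. So your second approach can be salvaged, but it is still the paper's argument in disguise once you recognize that choosing $u=s$, $v=t$ is allowed.
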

\begin{proof}
Let $a$ define the degree two relation.  Then, up to changing coordinates, we can view $a$ as $(s^2, t^2, 0, \dots, 0)$.  By Lemma \ref{lem-basicpolynomial}, we have the relation
\[ \sum_{i=0}^n f_i \partial_s a_i = 0.\]
Hence, $f_0= 0$ and $f$ is degenerate.
\end{proof}

\begin{lemma}\label{lem-reducible}
If $\partial f$ satisfies a degree two relation $a=(a_0, \dots, a_n)$ with all the $a_i$'s having a common root, then $f$ is degenerate.
\end{lemma}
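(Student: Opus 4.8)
The plan is to reduce to the case handled by Lemma \ref{lem-line} by removing the common root. Suppose $a = (a_0, \dots, a_n)$ is a degree two relation among the columns of $\partial f$ and that all the $a_i$ share a common root. After a change of coordinates on $\PP^1$ we may assume this common root is $[0:1]$, so that $s$ divides every $a_i$. Write $a_i = s \cdot \ell_i$ where each $\ell_i$ is linear. Then $a = s \cdot \ell$ where $\ell = (\ell_0, \dots, \ell_n)$ is a tuple of linear forms, not all zero.

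First I would observe that $\ell$ is itself a relation among the columns of $\partial f$: the equality $\sum_i a_i \partial_j f_i = 0$ for $j \in \{s,t\}$ reads $s \sum_i \ell_i \partial_j f_i = 0$, and since $\PP^1 \times \Spec$ is reduced (or simply because a polynomial identity divisible-by-$s$-and-equal-to-zero forces the quotient to vanish) we get $\sum_i \ell_i \partial_j f_i = 0$. Thus $\ell$ is a degree one relation among the columns of $\partial f$. By Lemma \ref{lem-basicpolynomial} applied to $\ell$ (with $b=1$), we obtain $\sum_i f_i \partial_j \ell_i = 0$ for $j \in \{s,t\}$. Since the $\ell_i$ are linear, $\partial_s \ell_i$ and $\partial_t \ell_i$ are constants $c_i, c_i'$, and $\ell_i = c_i s + c_i' t$; the two relations $\sum_i c_i f_i = 0$ and $\sum_i c_i' f_i = 0$ are scalar relations among the $f_i$. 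At least one of the vectors $(c_i)$, $(c_i')$ is nonzero since $\ell \neq 0$, so we obtain a nontrivial scalar relation among the $f_i$, whence $f$ is degenerate.

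The main obstacle — really the only subtlety — is justifying the cancellation of the common factor $s$ to conclude $\ell$ is a genuine relation; this is immediate since we are working with polynomial identities in $k[s,t]$, an integral domain. The rest is a direct invocation of Lemma \ref{lem-basicpolynomial} exactly as in the proof of the corollary following it, so I expect the whole argument to be only a few lines. One could alternatively phrase the conclusion more geometrically: the relation $\ell$ defines a degree one parameterized curve, i.e.\ a line, in $\PP^{n*}$, forcing $f$ to lie in a hyperplane; but the polynomial computation above is cleanest.
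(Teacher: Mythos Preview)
Your proof is correct and follows the paper's approach exactly: factor out the common linear factor to get a degree-one relation among the columns of $\partial f$, then apply Lemma~\ref{lem-basicpolynomial} to obtain a nontrivial scalar relation among the $f_i$. The only quibble is your opening sentence --- you never actually reduce to Lemma~\ref{lem-line} (that lemma concerns degree-two double covers of a line, not degree-one relations), and your argument, like the paper's, goes directly through Lemma~\ref{lem-basicpolynomial}.
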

\begin{proof}
Change coordinates on $\PP^1$ so that the common root is given by  $s=0$, and let $a_i = s a'_i$.  Then 
\[ \sum_{i=0}^n a'_i \partial_j f_i = 0 \]
for linear functions $a_i'$. By Lemma \ref{lem-basicpolynomial}, $f$ must be degenerate.
\end{proof}

\begin{corollary}
If $\partial f$ satisfies two degree two relations that define conics in the same plane in $\PP^{n *}$, then $f$ is degenerate.
\end{corollary}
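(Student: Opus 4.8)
The plan is to reduce the statement to the two preceding lemmas. Suppose $\partial f$ satisfies two independent degree two relations $a = (a_0, \dots, a_n)$ and $b = (b_0, \dots, b_n)$ whose corresponding conics lie in the same plane $\Pi \subset \PP^{n*}$. The idea is to use the freedom in choosing our basis for the pencil of relations: every linear combination $\lambda a + \mu b$ is again a degree two relation satisfied by the columns of $\partial f$, and its associated curve is again a conic (possibly degenerate) in $\Pi$. So the task becomes: show that within this pencil there is always some relation whose associated conic is degenerate in one of the two ways ruled out by Lemma \ref{lem-line} or Lemma \ref{lem-reducible}, and conclude that $f$ is degenerate.

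First I would set up coordinates. Since $a$ and $b$ both parameterize curves in the plane $\Pi$, after a linear change of coordinates on $\PP^n$ we may assume $a_i = b_i = 0$ for all $i \geq 3$; so we are really looking at a pencil of degree two parameterized curves in $\PP^2$. The next step is a classical fact about pencils of conics: a pencil of conics in $\PP^2$ (here, a pencil of quadratic parameterizations $\PP^1 \to \PP^2$) always contains a degenerate member. Concretely, the ``base locus''-type argument is that the $2 \times 3$ matrix of partials $\partial(\lambda a + \mu b)$, or equivalently the condition that the three quadratics $a_i$ share a root after a suitable choice of $[\lambda:\mu]$, is governed by a low-degree determinantal/resultant condition in $[\lambda:\mu]$ that necessarily has a solution over the algebraically closed ground field. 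More precisely, I would argue that for a generic member of the pencil the conic is nondegenerate and parameterized birationally, and then look at the discriminant-type locus: the condition that $\lambda a + \mu b$ fails to parameterize a smooth plane conic birationally is cut out by the vanishing of an appropriate $3\times 3$ determinant (the Wronskian-type expression built from $\partial_s(\lambda a+\mu b)$ and $\partial_t(\lambda a+\mu b)$), which is a nonzero homogeneous polynomial in $\lambda,\mu$ of positive degree, hence has a root. At such a root, the member of the pencil either maps two-to-one onto a line, or its coordinates have a common root, i.e.\ it is one of the two degenerate types.

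The main obstacle is making the dichotomy in that last step airtight: one must check that ``$\lambda a + \mu b$ does not define a nondegenerate birationally-parameterized conic'' really does force one of the exact hypotheses of Lemma \ref{lem-line} or Lemma \ref{lem-reducible}. The possible degenerations of a quadratic map $\PP^1 \to \PP^2$ are: (i) the three quadratics $a_i$ acquire a common linear factor (image is still a conic but the parameterization is two-to-one, or the tuple drops to an honest degree one map after cancellation — this is Lemma \ref{lem-reducible}); (ii) no common factor but the image is a line, so the map is a two-to-one cover of a line (Lemma \ref{lem-line}); (iii) the image is a single point — but then $f$ would be degenerate even more directly via Lemma \ref{lem-basicpolynomial}. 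One also has to handle the degenerate case where the conics $a$ and $b$ already lie on a common \emph{line} rather than spanning the plane $\Pi$, but that case is immediate from Lemma \ref{lem-line} applied to either one. Assembling these cases and invoking the relevant lemma in each yields that $f$ is degenerate, contradicting our standing assumption; hence no such pair of relations exists, which is the assertion of the corollary.
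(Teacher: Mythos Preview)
Your proposal is correct and follows exactly the paper's approach: the paper's proof is the two-line argument ``Any one dimensional family of degree two maps from $\PP^1$ to $\PP^2$ necessarily contains a degenerate conic. By the previous two lemmas, $f$ is degenerate,'' and you have simply unpacked both sentences in detail. The one imprecision is your description of the relevant $3\times 3$ determinant as a ``Wronskian-type expression built from $\partial_s(\lambda a+\mu b)$ and $\partial_t(\lambda a+\mu b)$''---the Jacobian is only $2\times 3$; what you actually want is the $3\times 3$ matrix of coefficients of $s^2, st, t^2$ in the three coordinate quadratics, whose determinant is cubic in $(\lambda:\mu)$ and detects linear dependence of the coordinates (i.e.\ image contained in a line).
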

\begin{proof}
Any one dimensional family of degree two maps from $\PP^1$ to $\PP^2$ necessarily contains a degenerate conic. By the previous two lemmas, $f$ is degenerate.
\end{proof}

Thus, to study $M(b_{\bullet}(2^2))$, we need only consider $f$ with $\partial f$ satisfying two relations that define smooth conics not lying in the same plane.  Hence, there are three possibilities: the planes spanned by the conics could be disjoint, meet in a point, or meet in a line.  First, we study the case when the degree two relations on $\partial f$ define conics with disjoint planes. 

\begin{proposition}
\label{goodComp}
If $3k \leq n+1$ and $2e \geq 3(n-1)$, there is a component of $\M(b_{\bullet}(2^k))$ of the expected dimension such that for the general element $f$, the degree two relation on  $\partial f$ define $k$ general conics in $\PP^{n*}$.
\end{proposition}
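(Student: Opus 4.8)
The plan is to produce an explicit family of curves $f$ realizing the splitting type $b_\bullet(2^k)$ via Sacchiero-type relations (Corollary \ref{cor-examples}), arranged so that the $k$ degree-two relations on $\partial f$ correspond to $k$ conics spanning $k$ disjoint planes in $\PP^{n*}$, and then to show that the locus swept out has the expected dimension. Since the expected codimension is an upper bound for the codimension of every component of $\M(b_\bullet(2^k))$ (Lemma \ref{expCodimLemma}), it suffices to exhibit one component of at least the expected dimension on which the generic member has this disjoint-planes behavior; equality of dimension is then forced.

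First I would construct a single such $f$. Choose the $\delta_i$ with $\delta_1 = 1$ so that for $k$ consecutive indices one gets $b_i = \delta_i + \delta_{i+1} = 2$ — e.g.\ alternating $\delta$'s equal to $1$ — and fill out the remaining $\delta_i$ to make $b_{n-1}$ large and the hypothesis $b_{n-1} \ge \max_i\{\delta_i + \delta_{i+1}\} - 1$ of Corollary \ref{cor-examples} hold; the condition $2e \ge 3(n-1)$ is exactly what makes room for this and forces $q \ge 2$ so that the ``large'' part of $N_f$ dominates. The key structural point, visible from the proof of Lemma \ref{lem-Sacchiero}, is that the degree-two relation $R_i = (a_0,\dots,a_n)$ is supported only in coordinates $i-1, i, i+1$; when $3k \le n+1$ one can select $k$ of these relations whose supports are pairwise disjoint triples of indices, so the corresponding conics in $\PP^{n*}$ lie in $k$ disjoint coordinate $\PP^2$'s. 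This establishes non-emptiness of the relevant locus.

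Next I would set up the incidence correspondence $\aA$ promised in the section introduction: pairs $(f, \{R_1,\dots,R_k\})$ where the $R_j$ are independent degree-two relations on $\partial f$ with conics in disjoint planes. Projecting to the space of $k$-tuples of conics in disjoint planes (an irreducible variety of readily computed dimension, essentially $k$ copies of the space of conics times a Grassmannian of disjoint planes), I would compute the fiber of $\pi_1$: given the conics $\alpha^{(1)},\dots,\alpha^{(k)}$, the condition $\sum_i a_i^{(j)} \partial f_i = 0$ for all $j$ is a linear system on the coefficients of $f$, and by Lemma \ref{lem-basicpolynomial} it is equivalent to the linear conditions $\sum_i f_i \partial a_i^{(j)} = 0$; counting these linear conditions against the $(n+1)(e+1)$ coefficients of $f$ gives the generic fiber dimension. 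The disjointness of the planes is what makes the $k$ systems ``independent'' so the counts simply add. Combining the base and fiber dimensions yields $\dim \aA$, and comparing with the expected dimension of $\M(b_\bullet(2^k))$ from Lemma \ref{expCodimLemma} should give exact agreement (after checking $\pi_2$ is generically finite, i.e.\ a general such $f$ has \emph{only} these $k$ conic relations up to scalar, which follows since $N_f$ has only a $k$-dimensional space of sections in the relevant degree).

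The main obstacle I anticipate is the semicontinuity/flatness bookkeeping: one must know that for the \emph{general} $f$ in the constructed family the splitting type is exactly $b_\bullet(2^k)$ and not something more special (so that the family lands in $\M(b_\bullet(2^k))$ and not in its boundary), and that the conic relations genuinely move in the expected-dimensional way rather than being constrained by hidden syzygies. The clean way around this is the strategy flagged in the section opening: exhibit one explicit $f$ (the monomial-type curve above, whose normal bundle is computed by Theorem \ref{monomialSplitting} or Lemma \ref{lem-Sacchiero}) for which the fiber of $\pi_2$ has exactly the a priori minimal dimension; upper semicontinuity of fiber dimension then forces $\pi_2$ to be (generically) flat of that relative dimension over the component of $\aA$ containing the point, and the dimension count closes. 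The arithmetic hypotheses $3k \le n+1$ and $2e \ge 3(n-1)$ are used precisely to guarantee both the disjoint-support choice and that $q \ge 2$, so that $d = 2 < q$ and the bundle is ``subabundant'' in the right range for the relation count to be the expected one.
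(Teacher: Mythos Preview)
Your overall framework is exactly the paper's: set up the incidence correspondence $\aA$, compute the generic fiber of $\pi_1$ over $k$-tuples of conics in independent planes using Lemma \ref{lem-basicpolynomial}, and anchor the count with an explicit Sacchiero/monomial example. Two concrete points, however, would derail the argument as written.

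First, your explicit construction does not produce conics in disjoint planes. If you take $k$ \emph{consecutive} indices with $b_i=\delta_i+\delta_{i+1}=2$ (i.e.\ a run $\delta_1=\cdots=\delta_{k+1}=1$), then the $k$ degree-two relations $R_1,\dots,R_k$ from Lemma \ref{lem-Sacchiero} have supports $\{0,1,2\},\{1,2,3\},\dots,\{k-1,k,k+1\}$, which pairwise overlap; there is no way to ``select $k$ of these relations whose supports are pairwise disjoint triples.'' The paper instead uses the $\delta$-pattern $1,1,x_1,1,1,x_2,\dots,1,1,x_k,1,x_{k+1},\dots$ (with the $x_l$ chosen as $q$ or $q-1$ to balance the remaining summands), so that the indices $i$ with $b_i=2$ are $1,4,7,\dots,3k-2$ and the corresponding $R_i$ sit in genuinely disjoint coordinate triples. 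This is precisely where $3k\le n+1$ enters: it guarantees enough room for $k$ disjoint triples of coordinates. (The condition $2e\ge 3(n-1)$ is used here, as you say, to make $q\ge 3$ so the ``large'' $x_l$'s are at least $2$ and the Sacchiero hypotheses hold.)

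Second, $\pi_2$ is not generically finite. Over a point $f\in\M(b_\bullet(2^k))$ the space of degree-two relations on $\partial f$ is $k$-dimensional, and the fiber of $\pi_2$ consists of all ordered $k$-tuples of unscaled conics spanning that space, so it has dimension $k^2$ (a $\GL(k)$-torsor). This $k^2$ is essential: the paper's count gives $\dim\Gamma=(e+1)(n+1)+3kn-2ek-k-1$, and subtracting $k^2$ yields exactly the expected dimension $(e+1)(n+1)-1-k(2e+k+1)+3nk$ from Lemma \ref{expCodimLemma}. If you treat $\pi_2$ as finite the numbers will not match. With these two fixes your outline becomes the paper's proof.
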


\begin{definition}
Let $\mathcal{C}^k$ be the space of linearly independent ordered $k$-tuples $(a_1, \dots, a_k)$, where each $a_i$ is an unscaled parameterized conic. Let $\aA$ be the incidence correspondence of tuples $(a_1, \dots, a_k, f)$ such that $(a_1, \dots, a_k) \in \mathcal{C}^k$, $f \in \M(b_{\bullet}(2^k))$, and $\sum_{j=0}^n a_{ij} \partial_l f_j =0$ for $1 \leq i \leq k$ and $l \in \{ s,t \}$. Let $\overline{\aA}$ denote the closure of $\aA$ in $\mathcal{C}^k \times\Mor_e(\PP^1, \PP^n)$. 
\end{definition}

Note that $f$ is defined only up to scaling, while the $a_i$ are tuples of polynomials. This will play a role in the dimension counts later. The incidence correspondence $\overline{\aA}$ projects  via $\pi_1$ to the space $\mathcal{C}^k$ and via $\pi_2$ to $\Mor_e(\PP^1, \PP^n)$. We will estimate the dimensions of the fibers of these two projections.

\begin{proof}[Proof of Proposition \ref{goodComp}]
 We show that there is one component $\Gamma$ of the incidence correspondence $\overline{\aA}$ that dominates $\mathcal{C}^k$, and the general element of $\Gamma$ maps to $\M(b_{\bullet}(2^k))$.

First, we compute the dimension of $\mathcal{C}^k$.  An unscaled parameterized conic in $\PP^{n*}$ is determined by specifying the plane it spans and a degree two map into that plane. The dimension of the Grassmannian $\mathbb{G}(2,n)$ is $3(n-2)$ and the parameterized map is given by specifying the 9 coefficients of  the three polynomials of degree $2$.  We conclude that $\mathcal{C}^k$ has dimension $3(n-2)k+9k$.  

We claim the general fiber of $\pi_1$ has dimension $(e+1)(n+1)-2k(e+2)-1$.  Let $\mathcal{C}^{\circ}$ denote the Zariski open locus in $\mathcal{C}^k$ parameterizing $k$-tuples of conics that span linearly independent planes, and let $(a_1, \dots, a_k) \in \mathcal{C}^{\circ}$.  Choose coordinates so that $a_i$ is the conic $x_{3i-3} = s^2, x_{3i-2} = -2st, x_{3i-1} = t^2$ in the linear space $\{x_0 = \dots = x_{3i-4} = 0 = x_{3i} = \dots = x_n \}$.  The conic $a_i$  imposes conditions only on $f_{3i-3}, f_{3i-2}$ and $f_{3i-1}$. Hence, the number of conditions imposed by the $k$ conics is $k$ times the number of conditions imposed by one conic.    By Lemma \ref{lem-basicpolynomial} the conditions
\[ \sum_{j=0}^n a_{ij} \partial_l f_j = 0, \: \: l \in \{s,t\} \quad \mbox{translate to} \quad \sum_{j=0}^n f_j \partial_l a_{ij} = 0, \: \: l \in \{s,t\}.  \]
Hence,  $$2s f_{3i-3}-2t f_{3i-2} = 0 \quad \mbox{and} \quad -2sf_{3i-2} + 2t f_{3i-1} = 0.$$  This shows that $st | f_{3i-2}$, but that $\frac{f_{3i-2}}{st}$ can be any degree $e-2$ polynomial, and that $\frac{f_{3i-2}}{st}$ completely determines $f_{3i-3}, f_{3i-2}$, and $f_{3i-1}$.  Therefore, each conic imposes $2(e+2)$ conditions, and the general fiber of $\pi_1$ has dimension $(e+1)(n+1)-2k(e+2)-1$. Notice that this dimension is positive under our assumption that $3k \leq n+1$.

Since the fibers of $\pi_1$ over $\mathcal{C}^{\circ}$ are irreducible of constant dimension, $\pi_1^{-1}(\mathcal{C}^{\circ})$ is  irreducible. Let $\Gamma$ be the closure of  $\pi_1^{-1}(\mathcal{C}^{\circ})$ in $\overline{\aA}$. Then $\Gamma$ is irreducible, dominates $\mathcal{C}^k$ and 
 $$\dim(\Gamma) = (e+1)(n+1) + 3kn - 2ek -k-1.$$  
 
We now compute the dimension of the general fiber of the map $\pi_2|_{\Gamma}$.  In the next paragraph, we construct an example of a parameterized curve $f \in \pi_2(\Gamma) \cap \M(b_{\bullet}(2^k))$. It follows that $\pi_2$ maps the general element of $\Gamma$ into $\M(b_{\bullet}(2^k))$. The general fiber of $\pi_2$ over $\pi_2(\Gamma) \cap \M(b_{\bullet}(2^k))$ corresponds to a choice of $k$ unscaled parameterized conics spanning the $k$-dimensional vector space of conic relations on $\partial f$. Hence, this fiber has dimension $k^2$. We conclude that $\M(b_{\bullet}(2^k))$ has a component of dimension  $$(e+1)(n+1) -k(2e+k+1)+3nk -1.$$  This matches the expected dimension by Lemma \ref{expCodimLemma}.

To finish, it suffices to construct an example $f$ where $\partial f$ satisfies $k$ general conic relations.  Using the division algorithm, write $2e-2 - 2k = q(n-k-1) + r$ with $0 \leq r < n-k-1$. We construct a curve with $$N_{f} = \OO(e+2)^k \oplus \OO(e+q)^{n-k-1-r} \oplus \OO(e+q+1)^r.$$  The construction depends on whether $n-k$ is odd or even.  In the odd case, we can construct a monomial example.
\begin{itemize}
\item If $n-k$ is odd, then $n-k-1$ and $r$ are even, thus $\frac{n-k-1}{2}$ and $\frac{r}{2}$ are integers. Now define the sequence
\begin{equation}\label{seq1}
1,1,x_1, 1,1,x_2, \dots, 1, 1, x_k, 1, x_{k+1}, 1, x_{k+2} , \dots, x_{\frac{n-k-1}{2}}, 1,
\end{equation}
where $x_1 = \dots =x_{\frac{r}{2}} = q$ and $x_{\frac{r}{2}+1} = \dots =  x_{\frac{n-k-1}{2}} = q-1$.  Set $k_0=e$ and for $1 \leq i \leq n$ define a sequence $k_i$ by the property that $k_i - k_{i-1}$ is equal to the $i$th entry of  Sequence (\ref{seq1}).  Note that $k_{n-1}=1$ and $k_n =0$. Let $f$ be the unramified monomial map 
\[ f = (s^{k_0}, s^{k_1}t^{e-k_1}, s^{k_2}t^{e-k_2}, \dots, s^{k_{n-1}} t^{e-k_{n-1}}, t^{e-k_n}). \]
Write $N_f \cong \bigoplus_{i=1}^{n-1} \OO(e+b_i)$.  By Theorem \ref{monomialSplitting},  $$b_i= k_{i-1} - k_{i+1} = (k_{i-1} - k_i) + (k_i - k_{i+1}),$$ which is the sum of the $i$th and $(i+1)$st entries in  Sequence (\ref{seq1}). Hence, $k$ of the  $b_i$ are equal to $2$, $r$ of the $b_i$ are equal to $q+1$ and the rest are equal to $q$.  Therefore, the normal bundle has the required form. Moreover, by the proof of Theorem \ref{monomialSplitting}, $\partial f$ satisfies $k$ degree two relations that define conics in $\PP^{n*}$ with independent planes. 

\item If $n-k$ is even, then define the sequence
\begin{equation}\label{seq2}
 1, 1, x_1, 1, 1, x_2, \dots, 1, 1, x_k, 1, x_{k+1}, 1, x_{k+2}, \dots, x_{\frac{n-k}{2}-1}, 1,
\end{equation} 
where $x_1 = \dots =x_{\lfloor \frac{r}{2} \rfloor} = q$ and $x_{\lfloor \frac{r}{2} \rfloor+1 } = \dots =  x_{\frac{n-k}{2}-1} = q-1$.  Notice the length of Sequence (\ref{seq2}) is $n-1$. We now invoke Corollary \ref{cor-examples} with $\delta_i$ given by Sequence (\ref{seq2}). Then $b_i= \delta_i + \delta_{i+1}$ for $1 \leq i \leq n-2$. Note that for $i \leq n-2$,  $k$ of the $b_i$ are equal to $2$,  $2 \lfloor \frac{r}{2} \rfloor$ are equal to $q+1$, and  the remaining $2 \lfloor \frac{n-k-r-1}{2} \rfloor$ are $q$. Since $n-k$ is even, either $r$ or $n-k-1-r$ is even. If $r$ is even, then $b_{n-1}$ is $q$. Otherwise, $b_{n-1}$ is $q+1$.  Hence, the hypotheses of Corollary \ref{cor-examples} are satisfied and we obtain an unramified morphism with the required normal bundle. Furthermore, $\partial f$ satisfies the desired relations.

\end{itemize}

\end{proof}

Now we consider the case when the degree two relations on $\partial f$ define conic curves in $\PP^{n*}$ whose planes intersect in a single point.  Our eventual goal is Corollary \ref{twoConicsCor}, which shows that these maps do not give a new component of $\M(b_{\bullet}(2^2))$. Let $\mathcal{P}$ be the locus in $\mathcal{C}^2$ parameterizing ordered pairs of conics whose planes intersect in a single point. 

\begin{lemma}\label{lem-conicsonepoint}
Let $(c_1, c_2) \in \mathcal{P}$. Then the dimension of $\pi_1^{-1} (c_1, c_2)$ is  at most  $(e+1)(n+1) - 4e -7$ and the locus where equality occurs has codimension at least $2$ in $\mathcal{P}$.
Furthermore,  if $(c_1, c_2)$ is general, then $\pi_1^{-1} (c_1, c_2)$ has dimension $(e+1)(n+1) - 4e -9$. 
\end{lemma}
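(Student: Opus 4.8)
The plan is to turn the computation of $\pi_1^{-1}(c_1,c_2)$ into linear algebra controlled by one binary quadratic form attached to each conic. By Lemmas~\ref{lem-line} and~\ref{lem-reducible}, a degenerate conic relation forces $f$ to be degenerate, so every point of $\aA$ lying over $\mathcal{P}$ has $c_1,c_2$ smooth, and it suffices to treat a pair of smooth conics whose planes $\Lambda_1,\Lambda_2$ meet in a single point $p$. Since $n\ge 5$, choose coordinates with $p=e_2$, $\Lambda_1=\langle e_0,e_1,e_2\rangle$ and $\Lambda_2=\langle e_2,e_3,e_4\rangle$. Combining Lemma~\ref{lem-basicpolynomial} with Euler's relation, the conditions $\sum_j c_{ij}\,\partial_l f_j=0$ are equivalent to $\sum_j f_j\,\partial_l c_{ij}=0$ (for $l\in\{s,t\}$ and $i=1,2$), so $\pi_1^{-1}(c_1,c_2)\subseteq \PP(V)$, where $V\subseteq H^0(\OO_{\PP^1}(e))^{n+1}$ is the linear subspace cut out by these relations.

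Next I would compute $\dim V$. The coordinates $f_5,\dots,f_n$ are unconstrained, contributing $(n-4)(e+1)$. Because $c_1$ lies in $\langle e_0,e_1,e_2\rangle$, its condition involves only $(f_0,f_1,f_2)$ and says that this triple is a degree-$e$ relation on the columns of $\partial c_1$; since $c_1$ is a smooth conic ($N_{c_1}\cong\OO_{\PP^1}(4)$), the module of such relations is free of rank one, generated by the unique degree-two relation $\rho_1$, which at a parameter $(s:t)$ is the tangent line to $c_1$ at $c_1(s:t)$. Thus $(f_0,f_1,f_2)=\rho_1 h_1$ with $h_1$ of degree $e-2$, and likewise $(f_2,f_3,f_4)=\rho_2 h_2$. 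Writing $L_i\in H^0(\OO_{\PP^1}(2))$ for the $e_2$-coordinate of $\rho_i$ --- whose zeros are exactly the parameters at which the tangent line to $c_i$ passes through $p$ --- compatibility of the two descriptions of $f_2$ reads $L_1h_1=L_2h_2$. The tangent lines of a smooth conic do not all pass through one point, so $L_1,L_2\neq 0$; hence, with $g=\gcd(L_1,L_2)$, the space of admissible $(h_1,h_2)$ has dimension $e-3+\deg g\le e-1$, and
\[ \dim\pi_1^{-1}(c_1,c_2)\ \le\ \dim\PP(V)\ =\ (n-4)(e+1)+(e-1)-1\ =\ (e+1)(n+1)-4e-7, \]
with equality precisely when $L_1$ and $L_2$ are proportional. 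For general $(c_1,c_2)\in\mathcal{P}$ the two contact divisors are disjoint, so $g=1$, $\dim\PP(V)=(e+1)(n+1)-4e-9$, and the matching lower bound follows once one knows that a dense open subset of $\PP(V)$ consists of nondegenerate unramified curves whose only degree-two relations are $c_1,c_2$ --- equivalently, whose normal bundle has exactly two $\OO(e+2)$ summands --- a genericity statement one checks on an explicit example (and which is consistent with degenerating the component $\Gamma$ of Proposition~\ref{goodComp} as its two spanning planes are brought together to meet in a point).

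It then remains to bound the locus where $L_1\propto L_2$. The form $L_i$ depends only on $(\Lambda_i,p,c_i)$, so after fixing the flag $(\Lambda_1,\Lambda_2,p)$ the assignment $(c_1,c_2)\mapsto([L_1],[L_2])$ is the product $\psi_1\times\psi_2$ of the maps $\psi_i\colon c_i\mapsto[L_i]$ from parameterized conics in $\Lambda_i$ to $\PP(H^0(\OO_{\PP^1}(2)))\cong\PP^2$; geometrically $\psi_i(c_i)$ is the divisor cut on $\PP^1$ by the two tangent lines to $c_i$ through $p$. Acting by $\operatorname{PGL}_2$ on the parameterization moves the two contact parameters to an arbitrary pair of points of $\PP^1$, so $\psi_i$ is dominant, and a direct count shows every fiber of $\psi_i$ has dimension $7$: over a reduced quadratic one fixes two point-parameter incidences on a variable conic, while over a double quadratic $[\ell^2]$ the fiber is $\{c_i : c_i(\ell)=p\}$, again of dimension $7$. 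Hence $\psi_1\times\psi_2$ is surjective with all fibers of dimension $14$, so the preimage of the diagonal $\Delta\subseteq\PP^2\times\PP^2$, which has codimension $2$, again has codimension $2$; letting the flag data vary, the locus $\{L_1\propto L_2\}$ has codimension $2$ in $\mathcal{P}$, proving the three assertions.

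The most delicate points are the two fiber-dimension inputs. First, the equidimensionality of $\psi_i$, including over the discriminant locus: a genuine jump in fiber dimension there would break the codimension-$2$ bound, so the computation of $\dim\psi_i^{-1}([\ell^2])$ must be done carefully. Second, in the "general $(c_1,c_2)$" part, the claim that a dense open subset of $\PP(V)$ lands in $\M(b_{\bullet}(2^2))$; this is where one either exhibits a curve in $\M(b_{\bullet}(2^2))$ whose two conic relations span planes meeting in a single point rather than a line, or argues by upper semicontinuity of fiber dimension along the degeneration of the two planes. (Controlling the fibers of $\overline{\aA}$ over the non-smooth-conic boundary of $\mathcal{P}$ would also be needed for the bound literally on all of $\mathcal{P}$, but that locus is irrelevant for $\M(b_{\bullet}(2^2))$.)
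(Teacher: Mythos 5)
Your argument is correct and its computational core coincides with the paper's: your forms $L_1,L_2$ are (up to sign) exactly the paper's minors $\det M_{1,2}$ and $\det M_{5,6}$, and the count "$f_2$ divisible by $\operatorname{lcm}(L_1,L_2)$, $f_2$ determines $f_0,f_1,f_3,f_4$, the rest free" is the same stratification by $\deg\gcd\in\{0,1,2\}$ giving $(e+1)(n+1)-4e-9$, $-4e-8$, $-4e-7$. Where you diverge is in the packaging and in one substantive step. First, instead of inverting $M_{1,2}$ and handling the "unless there is cancellation" case by hand, you use that a smooth parameterized conic has $N^*\cong\OO(-4)$, so its degree-$e$ syzygies form a free rank-one module generated by the degree-two relation $\rho_i$; this buys you the divisibility statement and the identification of $L_i$ with the tangent-line contact divisor through the intersection point $p$ in one stroke, and the nonvanishing $L_i\neq 0$ becomes the geometric statement that not all tangent lines of a smooth conic pass through a point (the paper instead checks $\det M_{i,j}\not\equiv 0$ by an explicit coefficient computation). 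Second, for the codimension-two claim the paper only asserts that $\det M_{1,2}$ and $\det M_{5,6}$ are "arbitrary" quadratics; you actually prove it, via the maps $\psi_i:c_i\mapsto[L_i]\in\PP(H^0(\OO_{\PP^1}(2)))$, checking equidimensionality of the fibers including over the discriminant (where $p$ lies on the conic), so that the preimage of the diagonal has codimension two — a genuine strengthening of the paper's one-line justification. Finally, you correctly flag that the exact value $(e+1)(n+1)-4e-9$ for general $(c_1,c_2)$ requires knowing that a dense open subset of $\PP(V)$ consists of nondegenerate unramified maps with splitting type exactly $b_{\bullet}(2^2)$; the paper's proof silently computes only the linear space of all tuples satisfying the relations, so your version is no less complete on this point (and only the upper bounds are used in Corollary \ref{twoConicsCor}), but if you want the lemma verbatim you should finish that step, e.g.\ by irreducibility of the family over the open locus $\gcd(L_1,L_2)=1$ together with one explicit example as in Proposition \ref{goodComp}.
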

\begin{proof}
By Lemmas \ref{lem-line} and \ref{lem-reducible}, we may assume that $c_1$ and $c_2$ parameterize smooth and nondegenerate conics. In suitable coordinates, we may write them as 
\[ \begin{array}{llllllll}
(g_1, &g_2, &g_3, &0, &0, &0, &\dots, &0) \\
(0, &0, &g_4, &g_5, &g_6, &0, &\dots, &0)
\end{array} \]
where the planes of the conics intersect at the point $(0,0,1,0, \dots, 0)$. Let $M_{ij}$ denote the matrix 
\[ M_{i,j} = \left[ 
\begin{array}{ll}
\partial_s g_i & \partial_s g_j \\
\partial_t g_i & \partial_t g_j
\end{array}
\right] \]
Then we claim $\det M_{i,j}$ is not identically zero for $1 \leq i < j \leq 3$.  
Write $g_i = a_i s^2 + b_i st + c_i t^2$, and notice that
\[ \det M_{i,j} = 2(a_ib_j - a_j b_i)s^2 + 2(b_ic_j-b_jc_i)t^2 + 4(a_ic_j-a_jc_i)st .\]
If the determinant were $0$, then the $2$ by $2$ minors of
\[ \left[
\begin{array}{lll}
a_i & b_i & c_i \\
a_j & b_j & c_j
\end{array}
\right] \]
would vanish, which shows that $g_i$ and $g_j$ are linearly dependent. This forces the first conic to be degenerate contrary to assumption.

Then we see that for any element $(f_0, \dots, f_n)$ in $\pi_1^{-1}(c_1,c_2)$, that
\[ \left[ \: M_{1,2} \: \begin{array}{l}
\partial_s g_3 \\ \partial_t g_3
\end{array} \right] \left[ \begin{array}{l}
f_0 \\ f_1 \\ f_2
\end{array} \right] = 0 .\]

Multiplying by $M_{1,2}^{-1}$ and solving for $f_0$ and $f_1$, we get
\[ \left[ \begin{array}{l}
f_0 \\ f_1
\end{array} \right] = - M_{1,2}^{-1} \left[ \begin{array}{l}
\partial_s g_3 \\ \partial_t g_3
\end{array} \right]f_2 \]
Expanding out $M_{1,2}^{-1}$ in terms of the partials of the $g_i$, we finally see that
$$ f_0 = - \frac{\det M_{2,3}}{\det M_{1,2}} f_2 \quad \mbox{and} \quad  f_1 = - \frac{\det M_{1,3} }{\det M_{1,2}} f_2 .$$
Unless there is cancellation, we see that $\det M_{1,2}$ must divide $f_2$.  If $\det M_{1,2}$ does not divide $f_2$, we see that the $2$ by $2$ minors of
\[ \left[ \begin{array}{lll}
\partial_s g_1 & \partial_s g_2 & \partial_s g_3 \\
\partial_t g_1 & \partial_t g_2 & \partial_t g_3
\end{array} \right] \]
must all vanish at the same point, which means that the conic must be degenerate.

We can obtain similar expressions for $f_3$ and $f_4$ in terms of $f_2$, and similarly can see that $f_2$ must be divisible by $\det M_{5,6}$.  Thus, there are between $e+1-2 = e-1$ and $e+1-4=e-3$ choices for $f_2$ (depending on what factors, if any, $\det M_{1,2}$ and $\det M_{5,6}$ have in common).  Given $f_2$, we see that $f_0, f_1, f_3$ and $f_4$ are completely determined, so the fiber dimension is between $(e+1)(n+1) - 4e-9$ and $(e+1)(n+1) - 4e-7$. Furthermore, the dimension is  $(e+1)(n+1) - 4e-9$ if $\det M_{1,2}$ and $\det M_{5,6}$ have no common factors. The dimension is  $(e+1)(n+1) - 4e-8$ if $\det M_{1,2}$ and $\det M_{5,6}$ have one common factor. Finally, the dimension is  $(e+1)(n+1) - 4e-7$ if $\det M_{1,2}$ and $\det M_{5,6}$ are constant multiples of each other.  Since $\det M_{1,2}$ and $\det M_{5,6}$ are arbitrary degree 2 polynomials, the locus where they have 2 common factors has codimension 2 in $\mathcal{P}$. This concludes the proof.
\end{proof}

\begin{corollary}
\label{twoConicsCor}
Let $n \geq 5$ and $2e \geq 3(n-1)$. Then $\pi_2(\pi_1^{-1}(\mathcal{P})) \cap \M(b_{\bullet}(2^2))$ is contained in the component $\pi_2(\Gamma)$.  
\end{corollary}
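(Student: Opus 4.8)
The plan is to compare the dimension of $\pi_2(\pi_1^{-1}(\mathcal{P}))$ with the dimension of the component $\pi_2(\Gamma)$ constructed in Proposition \ref{goodComp}, and argue that the former cannot be a new irreducible component of $\M(b_{\bullet}(2^2))$. First I would bound $\dim \pi_1^{-1}(\mathcal{P})$ from above. By Lemma \ref{lem-conicsonepoint}, over a general point of $\mathcal{P}$ the fiber of $\pi_1$ has dimension $(e+1)(n+1)-4e-9$, and the jump locus where the fiber dimension increases to $(e+1)(n+1)-4e-8$ or $(e+1)(n+1)-4e-7$ has codimension at least $1$ (resp.\ $2$) in $\mathcal{P}$. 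Since $\dim \mathcal{P}$ is one less than $\dim \mathcal{C}^2$ (the planes of the two conics meeting in a point is a codimension-one condition on $\mathbb{G}(2,n)\times\mathbb{G}(2,n)$ when $n\ge 5$), we get $\dim\mathcal{P} = 6(n-2)+18-1 = 6n+5$. Combining the fiber bounds with the codimension bounds on the jump loci, every component of $\pi_1^{-1}(\mathcal{P})$ has dimension at most $(6n+5) + (e+1)(n+1)-4e-9 = (e+1)(n+1)-4e+6n-4$, with equality only possible along the sublocus coming from the codimension-$\ge 1$ and codimension-$\ge 2$ jump loci — and a quick bookkeeping check shows no component can exceed $(e+1)(n+1)-4e+6n-4$.

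Next I would bound the fiber dimension of $\pi_2$ restricted to $\pi_1^{-1}(\mathcal{P})$ from below: over a point $f\in\M(b_{\bullet}(2^2))$ in the image, the fiber is (an open subset of) the set of ordered bases of the $2$-dimensional space of conic relations on $\partial f$, which has dimension $4$. Hence
\[
\dim \bigl(\pi_2(\pi_1^{-1}(\mathcal{P}))\cap \M(b_{\bullet}(2^2))\bigr) \;\le\; (e+1)(n+1)-4e+6n-4-4 \;=\; (e+1)(n+1)-4e+6n-8.
\]
On the other hand, $\dim\pi_2(\Gamma) = (e+1)(n+1)-2(2e+3)+6n-1 = (e+1)(n+1)-4e+6n-7$, which is strictly larger. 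So the $\mathcal{P}$-locus has strictly smaller dimension than $\pi_2(\Gamma)$; to conclude it is \emph{contained} in $\pi_2(\Gamma)$ (rather than merely being a lower-dimensional extra component) I would argue by degeneration: a pair of conics whose planes meet in a point is a specialization of a pair of conics with disjoint planes (within $\overline{\mathcal{C}^2}$, and with the incidence condition preserved, using that $\overline{\aA}$ is closed in $\mathcal{C}^2\times\Mor_e$). Since $\Gamma$ is the closure of $\pi_1^{-1}(\mathcal{C}^\circ)$ in $\overline{\aA}$ and $\mathcal{C}^\circ$ is dense in $\mathcal{C}^2$ containing $\mathcal{P}$ in its closure, $\pi_1^{-1}(\mathcal{P})\subseteq \Gamma$, hence $\pi_2(\pi_1^{-1}(\mathcal{P}))\subseteq\pi_2(\Gamma)$.

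The main obstacle I anticipate is making the degeneration argument of the last step fully rigorous: I need that an arbitrary $f$ with two conic relations whose planes meet in a point actually lies in the closure of the locus of $f$'s with two conic relations in disjoint planes, i.e.\ that the pair $(c_1,c_2,f)$ is a limit of triples in $\pi_1^{-1}(\mathcal{C}^\circ)$ \emph{and} that the limiting $f$ can be taken to stay in $\Mor_e(\PP^1,\PP^n)$ (no degeneration of the map itself). The cleanest route is probably to exhibit an explicit one-parameter family: fix $f$ and its two conic relations in coordinates as in Lemma \ref{lem-conicsonepoint}, then rotate the plane of the second conic away from the first by a parameter $\lambda$, simultaneously adjusting the relation so it remains a syzygy of $\partial f$ — one checks the perturbed data still satisfies $\sum_j a_{ij}\partial_l f_j = 0$ for $\lambda$ near $0$, and for $\lambda\ne 0$ the two planes become disjoint. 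Since $\Gamma$ is closed in $\overline{\aA}$, taking $\lambda\to 0$ places $(c_1,c_2,f)$ in $\Gamma$. The dimension computation in the first two paragraphs then serves as a consistency check (and as an independent confirmation that $\mathcal{P}$ produces nothing of top dimension), which is reassuring even though the containment is really what the corollary asserts.
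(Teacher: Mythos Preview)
Your proposal has the right overall shape (bound $\dim\pi_1^{-1}(\mathcal{P})$, then bound the image under $\pi_2$), but there are two genuine problems and a missed simplification.

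First, the computation of $\dim\mathcal{P}$ is wrong. Two $2$-planes in $\PP^n$ meeting in a point is \emph{not} a codimension-$1$ condition when $n\ge 5$; it is codimension $n-4$ in $\mathbb{G}(2,n)\times\mathbb{G}(2,n)$, so $\dim\mathcal{P}=\dim\mathcal{C}^2-(n-4)$ (this is exactly what the paper uses). Your final inequality happens to survive this error, but only barely, and for the wrong reason.

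Second, and more seriously, your degeneration argument cannot work as stated. You propose to fix $f$ and ``rotate the plane of the second conic away from the first \dots\ simultaneously adjusting the relation so it remains a syzygy of $\partial f$.'' But once $f$ is fixed, the $2$-dimensional space of conic syzygies of $\partial f$ is determined; you cannot move a relation outside this space. Worse, if some basis $(c_1,c_2)$ has planes meeting in a point, then every linear combination lies in the $\PP^4$ they span, so \emph{every} pair of independent conics in the pencil has intersecting planes. Hence $f$ never lies in $\pi_2(\pi_1^{-1}(\mathcal{C}^\circ))$, and there is no deformation with $f$ fixed that lands you in $\mathcal{C}^\circ$. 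Any valid degeneration must move $f$ as well, and you have given no reason such a family exists.

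The paper avoids the degeneration issue entirely. Instead of merely comparing to $\dim\pi_2(\Gamma)$, it invokes Lemma~\ref{expCodimLemma}: the expected dimension is a \emph{lower bound} for the dimension of \emph{every} irreducible component of $\M(b_{\bullet}(2^2))$, because these loci are determinantal. Since $\dim\pi_2(\pi_1^{-1}(\mathcal{P}))\le \dim\mathcal{P}+(e+1)(n+1)-4e-13$, and $\dim\mathcal{P}=\dim\mathcal{C}^2-(n-4)$ with $n\ge 5$, this image has dimension strictly below the minimum possible component dimension. Therefore $\pi_2(\pi_1^{-1}(\mathcal{P}))$ cannot contain an irreducible component of $\M(b_{\bullet}(2^2))$; its intersection with $\M(b_{\bullet}(2^2))$ is forced to lie in a component arising from one of the other strata. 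Your comparison to $\dim\pi_2(\Gamma)$ alone does not yield this: a locus can have dimension smaller than one component and still be an entirely separate, smaller component. The determinantal lower bound is the key input you are missing.
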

\begin{proof}
By Lemma \ref{lem-conicsonepoint}, every component of $\pi_1^{-1}(\mathcal{P}) \subset \aA$ has dimension at most $$\dim(\mathcal{P}) + (e+1)(n+1) - 4e -9.$$ Observe that the fiber dimension of $\pi_2$ over a point in $\pi_2(\pi_1^{-1}(\mathcal{P}))$ is at least $4$. Hence, the image of any such component is at most $\dim(\mathcal{P}) + (e+1)(n+1) - 4e -13$. On the other hand, $\dim(\mathcal{P}) = \dim(\mathcal{C}^2) - n+4$. By Lemma \ref{expCodimLemma}, the minimum possible dimension of a component of $\M(b_{\bullet}(2^2))$ is $\dim(\mathcal{C}^2)+ (e+1)(n+1) - 4e -13$.  Since $n \geq 5$, we conclude that $\pi_2(\pi_1^{-1}(\mathcal{P}))$ cannot contain any irreducible components of $\M(b_{\bullet}(2^2))$. Hence, $\pi_2(\pi_1^{-1}(\mathcal{P})) \cap \M(b_{\bullet}(2^2))$ is contained in $\pi_2(\Gamma)$.
\end{proof}

Let $\mathcal{L}$ denote the locus in $\mathcal{C}^2$ where the planes of the two conics intersect in a line.
\begin{lemma}
\label{paramTangencyLemma}
Let $(q_1, q_2) \in \mathcal{L}$. Then either $q_1$ and $q_2$ satisfy the parameterized tangency condition and the fiber $\pi_1^{-1}(q_1, q_2)$ has dimension $(e+1)(n+1) - 3e-7$ or $\pi_1^{-1}(q_1, q_2)$ contains no points of $\aA$.
\end{lemma}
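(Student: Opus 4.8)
The plan is to reduce to the case of two smooth nondegenerate conics, put the data in a normal form, and use Lemma~\ref{lem-basicpolynomial} to solve for the coordinates of $f$ one at a time. First, if $q_1$ or $q_2$ is a degenerate conic then, by Lemmas~\ref{lem-line} and~\ref{lem-reducible}, any $f$ whose partials satisfy the corresponding relation is degenerate; hence $\pi_1^{-1}(q_1,q_2)$ contains no points of $\aA$ and we are in the second alternative. So assume $q_1,q_2$ are smooth and nondegenerate. Since $(q_1,q_2)\in\mathcal{L}$, choose coordinates on $\PP^{n*}$ so that $q_1$ spans $\langle e_0,e_1,e_2\rangle$, $q_2$ spans $\langle e_0,e_1,e_3\rangle$, and their common line is $\ell=\langle e_0,e_1\rangle$; write $q_1=(g_0,g_1,g_2,0,\dots,0)$ and $q_2=(h_0,h_1,0,h_3,0,\dots,0)$.

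Next I would extract the constraints on $f=(f_0,\dots,f_n)$. By Lemma~\ref{lem-basicpolynomial} and Euler's relation, the condition that $q_1$ is a syzygy of $\partial f$ gives $\sum_{j=0}^2 f_j g_j=0$ and $\sum_{j=0}^2 f_j\,\partial_l g_j=0$ for $l\in\{s,t\}$; thus the triple $(f_0,f_1,f_2)$ is pointwise orthogonal to $g,\partial_s g,\partial_t g$, and since $g$ lies in the span of $\partial_s g,\partial_t g$ while these two are linearly independent at every point of $\PP^1$ (their cross product $w:=\partial_s g\times\partial_t g$, the dual conic, is a nowhere-vanishing triple of quadratics), we obtain $(f_0,f_1,f_2)=p\,w$ for a unique polynomial $p$ with $\deg p=e-2$. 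Similarly $(f_0,f_1,f_3)=p'\,w'$ with $w':=\partial_s h\times\partial_t h$ and $\deg p'=e-2$. Comparing the $\ell$-coordinates gives $p\,w_0=p'\,w_0'$ and $p\,w_1=p'\,w_1'$; none of $w_0,w_1,w_0',w_1'$ vanishes identically (otherwise every tangent line of $q_1$ or of $q_2$ would pass through a fixed point of $\ell$, impossible for a smooth conic), so for nondegenerate $f$ we have $p,p'\neq 0$ and hence $w_0 w_1'=w_1 w_0'$. Geometrically this says that the tangent line to $q_1$ at $g(s,t)$ and the tangent line to $q_2$ at $h(s,t)$ meet $\ell$ at the same point for all $(s,t)$, and I would check that for smooth nondegenerate conics whose planes meet along $\ell$ this is precisely the parameterized tangency condition. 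In particular, if $q_1,q_2$ do not satisfy the parameterized tangency condition, then no nondegenerate $f$ has $q_1,q_2$ as syzygies of $\partial f$, so $\pi_1^{-1}(q_1,q_2)$ contains no points of $\aA$.

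Assuming the parameterized tangency condition holds, I would compute the dimension. Choosing the parameter so that $\partial_s q_1=\lambda\,\partial_t q_2$, the $e_2$- and $e_3$-coordinates of this identity force $g_2=ct^2$ and $h_3=c's^2$, so $\ell$ is tangent to $q_1$ at the parameter $(1:0)$ and to $q_2$ at $(0:1)$. Hence $\gcd(w_0,w_1)=t$ and $\gcd(w_0',w_1')=s$ up to scalars, and $w_0 w_1'=w_1 w_0'$ then forces $w_0=tv_0$, $w_1=tv_1$, $w_0'=sv_0$, $w_1'=sv_1$ for a common coprime linear pair $(v_0,v_1)$. Now $p\,w_0=p'\,w_0'$ becomes $pt=p's$, so $p=s\tilde p$ and $p'=t\tilde p$ for an arbitrary polynomial $\tilde p$ of degree $e-3$; this determines $f_0,f_1,f_2,f_3$ in terms of $\tilde p$ (and one checks $w_2,w_3'\not\equiv 0$, so the generic such $f$ is nondegenerate), while $f_4,\dots,f_n$ are unconstrained. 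Counting parameters and dividing by the scaling of $f$ gives $\dim\pi_1^{-1}(q_1,q_2)=(e-2)+(n-3)(e+1)-1=(e+1)(n+1)-3e-7$; a final check that the generic $f$ so produced is unramified with normal bundle of type $b_{\bullet}(2^2)$ confirms that $\aA$ actually meets this fiber and that the dimension is attained.

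I expect the main obstacle to be the clean identification of the polynomial identity $w_0 w_1'=w_1 w_0'$ with the parameterized tangency condition. The implication ``a nondegenerate syzygy $f$ exists $\Rightarrow$ parameterized tangency'' is exactly the computation above, but the converse, which is needed to realize the stated fiber dimension, requires choosing the correct reparametrization of $\PP^1$ and verifying that the dual conics $w,w'$ degenerate along $\ell$ in precisely the way the dimension count demands.
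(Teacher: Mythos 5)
Your structural setup is attractive and partly matches the paper: the reduction to smooth conics via Lemmas \ref{lem-line} and \ref{lem-reducible} is the same, the description of the solutions as $(f_0,f_1,f_2)=p\,w$, $(f_0,f_1,f_3)=p'\,w'$ with $w,w'$ the dual conics is a clean reformulation of the paper's matrix manipulations, and your dimension count in the parameterized tangency case, $(e-2)+(n-3)(e+1)-1=(e+1)(n+1)-3e-7$, agrees with the paper's (which normalizes the first conic to $(s^2,st,t^2,0,\dots,0)$ and observes that $t^3\mid f_0$ and $f_0$ determines $f_1,f_2,f_3$). The gap is exactly at the step you flag: the identity $w_0w_1'=w_1w_0'$ is \emph{not} equivalent to the parameterized tangency condition, so "a nondegenerate syzygy $f$ exists $\Rightarrow$ parameterized tangency" is not "exactly the computation above." Concretely, in your coordinates take $q_1=(s^2,t^2,st,0,\dots,0)$ and $q_2=(s^2,t^2,0,-2st,0,\dots,0)$. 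Both are smooth conics whose planes meet along $\ell=\langle e_0,e_1\rangle$, and neither is tangent to $\ell$ (so the parameterized tangency condition fails: its $e_2$- and $e_3$-components would force $g_2$ and $h_3$ to become perfect squares after reparameterization, which $st$ never does). Yet $w=(-2t^2,-2s^2,4st)$ and $w'=(4t^2,4s^2,4st)$, so $w_0w_1'-w_1w_0'=0$, and nonzero solutions exist with $p=-2p'$; they are all of the form $(f_0,f_1,f_2,f_3)\propto p'\,(t^2,s^2,-2st,st)$, hence satisfy $f_2+2f_3=0$ and are degenerate. So the lemma is not contradicted, but your argument, which stops at $w_0w_1'=w_1w_0'$, cannot detect this and therefore does not prove the "no points of $\aA$" alternative.

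What is missing is the actual geometric content of the paper's proof: from a point of $\aA$ one must use that $f_0,\dots,f_3$ span a $4$-dimensional space (the remaining coordinates contribute at most $n-3$ dimensions), i.e. that $(pw_0,pw_1,pw_2,p'w_3')$ are linearly independent, and show that this stronger condition forces both conics to be tangent to $\ell$ and then forces the parameterized tangency condition. The paper does this by a case analysis: when the first conic meets $\ell$ in two distinct points it is normalized to $(st,s^2,t^2,0,\dots,0)$, the $4\times 4$ matrix of conditions must have identically vanishing determinant, and the coefficient equations (\ref{l1})--(\ref{l5}) force one of the conics to be degenerate; in the tangent case, combining rows yields $\ell_1f_1+\ell_2f_2=0$ alongside $sf_1+2tf_2=0$, whence either $f_2=0$ (so $f$ is degenerate) or the parameterized tangency condition holds. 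Your divisibility framework could likely be pushed to the same conclusion (analyze $\gcd(p,p')$ and show that independence of $(pw_0,pw_1,pw_2,p'w_3')$ fails unless both conics are tangent to $\ell$ and $\partial_s q_1\propto\partial_t q_2$ after reparameterization), but as written the key equivalence is asserted rather than proved, and in the form stated it is false.
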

\begin{proof}
By Lemmas \ref{lem-line} and \ref{lem-reducible}, we assume that the conics are smooth. We choose coordinates on $\PP^{n *}$ so that the two planes have the form
\begin{align*}
(*, *, *, 0, 0, \dots, 0) \\
(0, *, *, *, 0, \dots, 0)
\end{align*}
First, we show that each conic must be tangent to the line of intersection $\ell$.  To get a contradiction, suppose the first conic intersects $\ell$ in two distinct points.  Up to reparameterizing $\PP^1$, we can express the conic as
\[ (st, s^2, t^2, 0, \dots, 0) . \]
Let the other conic be
\[ (0, g_1, g_2, g_3, 0 \dots, 0) , \]
where $g_i = a_i s^2+ b_i st + c_i t^2$.  Then we see that points in $\pi_1^{-1}(q_1,q_2)$ are maps $(f_0, f_1, f_2, f_3,  \dots, f_n)$ with
\[ \left[ \begin{array}{llll}
t & 2s & 0 & 0 \\
s & 0 & 2t & 0 \\
0 & 2a_1s + b_1t & 2a_2s + b_2t & 2a_3 s + b_3t \\
0 & b_1s+2c_1t & b_2s+2c_2t & b_3s+2c_3t 
\end{array} \right]
\left[ \begin{array}{l}
f_0 \\
f_1 \\
f_2 \\
f_3
\end{array}
\right]
=
\left[ \begin{array}{l}
0 \\
0 \\
0 \\
0
\end{array}
\right]
\]
If the $f_i$ are not all zero, we see that this implies that the determinant of the $4$ by $4$ matrix must be $0$, giving us the relation
\[ (a_3b_2 - a_2b_3)s^4 + 2(a_3c_2 - a_2c_3)s^3t + (a_3b_1 - a_1b_3 + b_3c_2 - b_2c_3)s^2t^2 + 2(a_3c_1 - a_1c_3)st^3 + (b_3c_1 - b_1c_3)t^4 = 0 \]
or
\begin{align} \label{l1}
a_3b_2 - a_2b_3 = 0\\ \label{l2}
a_3c_2 - a_2c_3 = 0\\ \label{l3}
a_3b_1 - a_1b_3 + b_3c_2 - b_2c_3 = 0 \\ \label{l4}
a_3c_1 - a_1c_3 = 0\\ \label{l5}
b_3c_1 - b_1c_3 = 0
\end{align}

We consider the implications of this on the matrix with rows given by the coefficients of the $g_i$:
\[ G = \left[ \begin{array}{lll}
a_1 & b_1 & c_1 \\
a_2 & b_2 & c_2 \\
a_3 & b_3 & c_3
\end{array} \right] \]

Equations $(\ref{l1})$ and $(\ref{l2})$ are precisely the vanishing of two of the subminors of the bottom two rows of the matrix $G$. We claim that $a_2$ and $a_3$ are not both $0$. If $a_2 = a_3 = 0$, then by equation $(\ref{l4})$ $a_1c_3 = 0$.  Since the $a_i$ cannot all vanish (otherwise, the $g_i$ would all have a common factor), we see that $c_3 = 0$.  By equation $(\ref{l5})$, this shows that $b_3c_1 = 0$.  Since $b_3 \neq 0$ (otherwise we would have $g_3 = 0$ and one of our conics would be degenerate), this gives that $c_1 = 0$.  Thus, our two conics have the form 
\begin{align*}
&(st, s^2, t^2, 0, 0, \dots, 0) \\
&(0, a_1 s^2 + b_1 st, b_2st+a_1t^2, b_3 st, 0, \dots, 0)
\end{align*} 

We see that $a_1$ times the first row minus the second row consists of a conic whose terms all have a common factor, which is impossible.  Therefore, our original assumption that $a_2=a_3=0$ was wrong.

If $a_2$ and $a_3$ are not both $0$, then equations $(\ref{l1})$ and $(\ref{l2})$ give $b_2 = \lambda a_2$,  $c_2 = \nu a_2$, $b_3 = \lambda a_3$, $c_3 = \nu a_3$.  If $a_3 = 0$, then we see that both $b_3$ and $c_3$ are $0$, which means that $g_3 = 0$, which means that the second conic is a double cover of a line, which is impossible, so we see that $a_3 \neq 0$.  Combining our expressions for $c_3$ with equation $(\ref{l4})$, we see that $a_3(c_1-\nu a_1) = 0$, which means $c_1 = \nu a_1$.  Thus, the determinant of $G$ is $0$ since the last column is a multiple of the first, which means that there is a linear relation among the rows, which means that the conic is degenerate.  Thus, this case is impossible, which shows that both of the two conics must be tangent to the line of intersection of the two planes.

So, suppose that the two conics are both tangent to the line of intersection of the two planes.  Up to a choice of coordinates on $\PP^n$, we can assume our two conics have the form
\[ (s^2,st,t^2,0,0,\dots,0) \]
and
\[ (0, g_1, g_2, g_3, 0, \dots, 0).\]
As before, write $g_i = a_i s^2+ b_i st + c_i t^2$.  Since the second conic is also tangent to the line of intersection of the two planes, we see that $g_3$ is a square, i.e., $g_3 = (us+vt)^2$.  Then we see that the fibers are tuples $(f_0, f_1, \dots, f_n)$ where
\[ \left[ \begin{array}{llll}
2s & t & 0 & 0 \\
0 & s & 2t & 0 \\
0 & 2a_1s + b_1t & 2a_2s + b_2t & 2u(us+vt) \\
0 & b_1s+2c_1t & b_2s+2c_2t & 2v(us+vt) 
\end{array} \right]
\left[ \begin{array}{l}
f_0 \\
f_1 \\
f_2 \\
f_3
\end{array}
\right]
=
\left[ \begin{array}{l}
0 \\
0 \\
0 \\
0
\end{array}
\right]
\]

Taking $-v$ times the third row of the matrix plus $u$ times the fourth row of the matrix gives
\[ \ell_1 f_1 + \ell_2 f_2 = 0 \]
where $\ell_1 = -v(2a_1s+b_1t) + u(b_1s+2c_1t)$ and $\ell_2 = -v(2a_2s + b_2 t)+u(b_2s+2c_2t)$.  Since we also have
\[ sf_1 + 2t f_2 = 0 \]
we see that either $f_2 = 0$ or the two conics satisfy the parameterized tangency relation.  

To compute the fiber dimension of $\pi_1$, observe that $f_0$ determines $f_1, f_2, f_3$ and must be divisible by $t^3$. Hence, in total there are $3e+6$ conditions on the fiber of $\pi_1$. This concludes the proof.
\end{proof}

\begin{proof} (Theorem \ref{twoConicClassification}) We have already seen that the component $\mathcal{G} = \pi_2(\Gamma) \cap \M(b_{\bullet}(2^2))$ is an irreducible component. It follows easily from  Lemma \ref{paramTangencyLemma} and Corollary \ref{twoConicsCor} that there is at most one more component of $\M(b_{\bullet}(2^2))$ corresponding to the locus $\mathcal{PT}$ corresponding to conic relations that satisfy the parameterized tangency condition. We later exhibit an element of $\mathcal{PT}$ to show that it is nonempty. If  $\dim(\mathcal{PT} ) \geq \dim(\mathcal{G})$, then $\mathcal{PT}$ is a separate component from $\mathcal{G}$. We now compute $\dim(\mathcal{PT})$. To choose the first conic, we need to specify the plane spanned by the conic and three degree $2$ polynomials mapping to that plane. The dimension of the Grassmannian $\mathbb{G}(2,n)$ is $3 (n-2)$. Hence, there are $3(n-2)+9$ dimensions of choice for the first unscaled parameterized conic $C_1$.  Then, there is a $1$-dimensional family of tangent lines to $C_1$, given by the vanishing of some coordinate $v$ on $C_1$. Given a tangent line $\ell$, the set of planes $\Lambda$ that contain $\ell$ is a Schubert variety in $\mathbb{G}(2,n)$ of dimension $n-2$.  Finally, there is a $5$-dimensional family of unscaled parameterized conics $a$ satisfying the parameterized tangency condition with respect to $C_1$. To see this, note that there is a $2$-dimensional choice of coordinate $v$ such that $\partial_v C_2 = \partial_u C_1$ (Note that if $C_2$ is to satisfy the parameterized tangency condition, we can always choose such $v$. We could alternatively compute the dimension by allowing $\partial_v C_2$ to be merely a scalar multiple of $\partial_u C_1$; in that case we would only have a $1$-dimensional choice of $v$, since it would be only defined up to scaling, but we would then be able to scale $C_1$ by a $1$-dimensional choice of parameter).   Given $\partial_v C_2$, there is a $3$-dimensional family of possible $C_2$.  This gives a $3(n-2)+9+1+n-2+5 = 4n +7$ dimensional family of ordered pairs of conics satisfying the parameterized tangency condition.  The dimension of the fibers of $\pi_1$ over this locus is $(e+1)(n+1) - 3e-7$, and the dimension of the fibers of $\pi_2$ over this locus in $\aA$ is $4$, showing that $\mathcal{PT}$ has dimension $4n+7+(e+1)(n+1)-3e-7-4 = e(n-2)+5n-3$.  This is at least the dimension of $\mathcal{G}$ when $e \geq 2n-3$ by Proposition \ref{goodComp}.

Finally, we exhibit an example of a curve in $\mathcal{PT}$ using a construction similar to the one in the proof of Proposition \ref{goodComp}.  Express $2e-6 = (n-3) q +r$. 
\begin{itemize}
\item If $n$ is odd, then $r$ is even.  Consider the sequence
\begin{equation}\label{seq3}
1,1,1,x_1, 1, x_2, 1, \dots, x_{\frac{n-3}{2}}, 1,
\end{equation}
where $x_1= \dots = x_{\frac{r}{2}}= q$ and $x_{\frac{r}{2}+1} = \dots = x_{\frac{n-3}{2}}= q-1$.  Let $k_0=e$. For $1 \leq i \leq n$, let $k_i$ be defined by the property that $k_{i-1} - k_{i}$ is the $i$th entry in Sequence  (\ref{seq3}). Observe that $k_{n-1} =1$ and $k_n =0$. Let
\[ f = (s^{k_0}, s^{k_1}t^{e-k_1}, s^{k_2}t^{e-k_2}, \dots, s^{k_{n-1}} t^{e-k_{n-1}}, t^{e-k_n}). \]
Write $N_f \cong \bigoplus_{i=1}^{n-1} \OO(e+b_i)$.  By Theorem \ref{monomialSplitting},  $b_i = k_{i-1} - k_{i+1} = (k_{i-1} - k_i) + (k_i - k_{i+1})$, which is the sum of the $i$th and $(i+1)$st entries in the sequence. Hence, $b_1=b_2=2$. Moreover, $r$ of the $b$'s are equal to $q+1$ and the rest are equal to $q$. Hence, $N_f$  has the required form. Notice that the first $4$ coordinates of $f$ have the form $(s^e, s^{e-1}t, s^{e-2}t^2, s^{e-3}t^3, \dots, )$. The first four columns $R_1, \dots, R_4$ of $\partial f$ satisfy the two degree two relations $$t^2 R_1 - 2st R_2 + s^2 R_3=0, \quad t^2 R_2 - 2st R_3 + s^2 R_4=0.$$ These relations satisfy the parameterized tangency condition. 
\item If $n$ is even, then let the sequence be
\begin{equation}\label{seq4}
1,1,1,x_1, 1, x_2, 1, \dots, x_{\frac{n-2}{2}-1}, 1, 
\end{equation}
where $x_1= \dots = x_{\lfloor \frac{r}{2} \rfloor}= q$ and $x_{\lfloor \frac{r}{2} \rfloor+1} = \dots = x_{\frac{n-2}{2}-1}= q-1$.  We invoke Corollary \ref{cor-examples} with $\delta_i$ given by Sequence (\ref{seq4}). Then $b_i = \delta_i + \delta_{i+1}$ for $1 \leq i \leq n-2$. Hence, $b_1 = b_2 =2$. If $1 \leq i \leq n-2$, $2 \lfloor \frac{r}{2} \rfloor$ of the $b_i$ are equal to $q+1$, the rest are $q$. If $r$ is even, then $b_{n-1}= q$. Otherwise, $b_{n-1} = q+1$. Corollary \ref{cor-examples} applies and provides a curve with the desired normal bundle. This curve has two degree two relations that satisfy the parameterized tangency condition as in the previous case. 

\end{itemize}

\end{proof}

\subsection{More conics}
In this section, by considering chains of conic relations that satisfy the parameterized tangency condition, we will show that the number of components of $\M(b_{\bullet}(2^k))$ grows at least linearly with $k$ for sufficiently large $e$ and $n$.

Let $B_j^k$ denote the ordered $k$-tuple of unscaled parameterized conics $(C_1, \dots, C_k)$, where $C_i$ with $i \leq j$ is the unscaled parameterized conic $(s^2, -2st, t^2)$ contained in the plane $x_h=0$ for $h \not= i-1,i, i+1$ and $C_i$ with $i>j$ is the unscaled parameterized conic $(s^2, -2st, t^2)$ contained in the plane $x_h =0$ for $h \not=3 i -2j -1, 3i-2j, 3i-2j+1$. For example, $B_3^5$ is the following tuple of unscaled parameterized conics
$$\begin{array}{cccccccccccccc} (s^2, & -2st, & t^2,&  0, &0, &0, &0, &0, &0 , &0, &0, &0, &\dots &0) \\ (0, &s^2, &-2st, & t^2, & 0, & 0, &0, & 0, & 0, &0, &0, &0, &\dots &0) \\ (0,& 0, & s^2, & -2st, & t^2, & 0, & 0, & 0,& 0,& 0, & 0,& 0, &\dots & 0) \\ (0, & 0, & 0, & 0, & 0, & s^2, & -2st, & t^2, & 0, & 0, & 0,& 0, &\dots & 0)  \\ (0,& 0,& 0, & 0,& 0, & 0, & 0, & 0,  & s^2, & -2st, & t^2, & 0, &\dots & 0) \end{array}$$
The conics $C_i$ and $C_{i+1}$ in $B_j^k$ satisfy the parameterized tangency condition for $1\leq i <j$ and the rest of the conics are general.

\begin{theorem}\label{thm-moreconics}
Let $n \geq 3k-1$ and $e> 2 k n- 2n -2$. Then  $\M(b_\bullet(2^k))$ has at least $k$ components.
\end{theorem}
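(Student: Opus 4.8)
The strategy is to promote the two–conic analysis behind Theorem~\ref{twoConicClassification} to $k$ conics, producing one component for each possible length of the longest parameterized–tangency chain among the conic syzygies of $\partial f$. For $1\le j\le k$ let $\mathcal{C}^k_j\subset \mathcal{C}^k$ be the closure of the locus of configurations in which $C_1,\dots,C_j$ form a chain whose consecutive pairs satisfy the parameterized tangency condition (as in $B^k_j$) while $C_{j+1},\dots,C_k$ span pairwise disjoint planes disjoint from the chain. Counting parameters exactly as in the computation of $\dim\mathcal{PT}$ — the first conic of the chain costs $3n+3$, each additional conic in the chain costs $n+4$, each general conic costs $3n+3$ — gives $\dim\mathcal{C}^k_j = 3n+3+(j-1)(n+4)+(k-j)(3n+3)$. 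Over a generic point of $\mathcal{C}^k_j$ the fiber of $\pi_1$ is cut out by linear conditions on $f$: by Lemma~\ref{lem-basicpolynomial} a chain of $j$ conics forces $f_i=s^i t^{j+1-i}h$ for $0\le i\le j+1$ with $h$ of degree $e-j-1$ (the same mechanism that gave $st\mid f_{3i-2}$ in Proposition~\ref{goodComp} and $t^3\mid f_0$ in Lemma~\ref{paramTangencyLemma}), so the chain imposes $(j+1)e+2j+2$ conditions, and each general conic imposes $2(e+2)$. Taking the closure of this irreducible family in $\overline{\aA}$ yields an irreducible $\Gamma_j$ with
\[
\dim\Gamma_j \;=\; \dim\mathcal{C}^k_j + (e+1)(n+1)-1-\bigl[(j+1)e+2j+2+2(k-j)(e+2)\bigr],
\]
and a direct computation gives $\dim\Gamma_{j+1}-\dim\Gamma_j = e-2n+3$, which is positive since the hypothesis forces $e>2n-3$.

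Next I would check that $\pi_2$ maps the generic point of $\Gamma_j$ into $\M(b_\bullet(2^k))$. It suffices to produce one explicit $f$ realizing the configuration $B^k_j$ with $N_f\cong\OO(e+2)^k\oplus\OO(e+q)^{n-1-k-r}\oplus\OO(e+q+1)^r$: take a monomial curve (or a Sacchiero curve via Corollary~\ref{cor-examples} when parities require it) whose sequence of exponent gaps begins with $j+1$ ones — producing $b_1=\dots=b_j=2$ together with the tangency chain, just as the block $(s^e,s^{e-1}t,s^{e-2}t^2,s^{e-3}t^3)$ produces the two tangency relations at the end of the proof of Theorem~\ref{twoConicClassification} — followed by $k-j$ blocks of the form $(q,1,1)$, each contributing one further $b_i=2$ whose conic spans a disjoint plane, and padded out with alternating $1$'s and $q$'s (or $q-1$'s); the hypotheses $n\ge 3k-1$ and $e$ large are exactly what make this bookkeeping possible. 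Since any point of $\aA$ forces $h^0(N_f^*(e+2))\ge k$, hence $N_f$ to have at least $k$ summands of degree $\le e+2$, and since by upper semicontinuity of $h^0(N_f^*(e+2))$ the generic member of $\pi_2(\Gamma_j)$ has no more such summands than the explicit example, its normal bundle is $b_\bullet(2^k)$. Computing the generic fiber dimension $\psi_j$ of $\pi_2|_{\Gamma_j}$ — one finds $\psi_1=k^2$ and $\psi_j=j^2+(k-j)^2$ for $j\ge 2$, reflecting that the conic syzygies of a generic such $f$ split canonically into a $j$-dimensional chain part and a $(k-j)$-dimensional ``general'' part — and subtracting, the hypothesis $e>2kn-2n-2$ makes $\dim\pi_2(\Gamma_1)<\dim\pi_2(\Gamma_2)<\cdots<\dim\pi_2(\Gamma_k)$. (The cases $k=1,2$ are Miret's theorem and Theorem~\ref{twoConicClassification}.)

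To see the $\pi_2(\Gamma_j)$ lie in $k$ distinct components, introduce the invariant $\ell(f)$ equal to the maximal length of a parameterized–tangency chain among the conic syzygies of $\partial f$; by Lemma~\ref{lem-basicpolynomial} this is the largest $\ell$ such that, after a linear change of coordinates on $\PP^n$, some $\ell+2$ of the polynomials $f_i$ share a common factor of degree $e-\ell-1$, so $\{\ell(f)\ge\ell_0\}$ is closed in $\M(b_\bullet(2^k))$. On $\pi_2(\Gamma_j)$ one has $\ell\ge j$ identically and $\ell=j$ on a dense open subset. If $V_j$ is a component of $\overline{\M(b_\bullet(2^k))}$ containing $\overline{\pi_2(\Gamma_j)}$, then its generic point — being a generization of a point with $\ell=j$ — has $\ell\le j$; and any component of $\overline{\aA}$ whose $\pi_2$-image contains $\overline{\pi_2(\Gamma_j)}$ corresponds to a ``chain partition'' that degenerates, inside $\M(b_\bullet(2^k))$, to a single $j$-chain, which by the same parameter counts as above has $\pi_2$-image of dimension at most $\dim\pi_2(\Gamma_j)$ once $e>2kn-2n-2$; hence $\overline{\pi_2(\Gamma_j)}=V_j$, its generic point has $\ell=j$, and $V_1,\dots,V_k$ are pairwise distinct.

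The main obstacle is this last step: ruling out the ``mixed'' degenerations — several shorter chains rather than one long one — that could a priori collapse two of these loci into a single component. This is the $k$-conic analogue of the case analysis in Lemmas~\ref{lem-conicsonepoint}--\ref{paramTangencyLemma}, and it is precisely where the hypothesis $e>2kn-2n-2$ is needed, to force the relevant dimension inequalities among all the chain-partition strata. A secondary point requiring care is checking that the tangency-chain conditions behave as expected under specialization, so that $\ell$ is genuinely semicontinuous and equals $j$ generically on $\pi_2(\Gamma_j)$ rather than jumping upward for a lower-dimensional but still dominant reason.
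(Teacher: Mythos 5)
Your setup reproduces the paper's: the strata $\mathcal{C}^k_j$ of chain configurations, their dimensions, the fiber count $(2k-j+1)(e+2)$ for $\pi_1$, the difference $e-2n+3$, and the monomial/Sacchiero examples are all as in the paper's proof. The gap is in the distinctness step, which is where the theorem actually lives. Your plan is to show each $\overline{\pi_2(\Gamma_j)}$ is itself a component of $\M(b_\bullet(2^k))$ by combining the chain-length invariant $\ell$ with a dimension comparison over all ``chain-partition'' strata; you flag that comparison as the main obstacle and do not carry it out. Semicontinuity of $\ell$ only gives that the generic point of a component containing $\overline{\pi_2(\Gamma_j)}$ has $\ell\le j$; since $\{\ell\ge j\}$ is closed, containing $\pi_2(\Gamma_j)$ says nothing about $\ell$ at the generic point, so the invariant separates nothing until you already know $\overline{\pi_2(\Gamma_j)}$ is a component, and the inequalities $\dim\pi_2(\Gamma_1)<\cdots<\dim\pi_2(\Gamma_k)$ do not give that (a larger component with smaller generic $\ell$ could contain $\pi_2(\Gamma_j)$ in its boundary). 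The analysis of mixed-chain degenerations you would need is precisely the kind of question the paper leaves open (see the discussion of the strata $\mathcal{P}_\mu$ for partitions $\mu$ after the theorem), so this route is strictly harder than the statement. Two further claims are unjustified: the identification of $\ell(f)$ with the common-factor condition for chains in arbitrary position (the paper only establishes this normal form at $B^k_j$, and for $k=2$ only after the case analysis of Lemma \ref{paramTangencyLemma}), and the fiber dimension $\psi_j=j^2+(k-j)^2$ of $\pi_2|_{\Gamma_j}$, which looks wrong since each of the $k-j$ ``general'' syzygies varies in the full $k$-dimensional space $H^0(N_f^*(e+2))$, contributing $(k-j)k$ rather than $(k-j)^2$.

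The paper avoids all of this by arguing upstairs in the incidence correspondence and never showing that the chain strata closures are components of $\M(b_\bullet(2^k))$ directly. If a single irreducible component $U$ of $\pi_1^{-1}(\mathcal{C}^k)$ contained both $\mathcal{V}_{j_1}$ and $\mathcal{V}_{j_2}$ with $j_1<j_2$, then $B^k_{j_1}\in\pi_1(U)$, so by semicontinuity of fiber dimension the general fiber of $\pi_1|_U$ has dimension at most $\dim\pi_1^{-1}(B^k_{j_1})=(e+1)(n+1)-1-(2k-j_1+1)(e+2)$, hence $\dim U\le\dim\mathcal{C}^k+(e+1)(n+1)-1-(2k-j_1+1)(e+2)$, which is smaller than $\dim\mathcal{V}_{j_2}$ exactly when $e>2kn-2n-2$ --- a contradiction. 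Thus the $\mathcal{V}_j$ lie in $k$ distinct components $\mathcal{P}_j$ of the incidence correspondence; the explicit examples show each $\mathcal{P}_j$ contains points of $\aA$ with the prescribed normal bundle, where $\pi_2$ is generically a $\GL(k)$-bundle (fibers are choices of basis of the $k$-dimensional space of conic syzygies, of dimension $k^2$ on every component), and distinctness then transfers to $\M(b_\bullet(2^k))$. To repair your argument, replace the $\ell$-invariant and the partition analysis by this comparison of $\dim\mathcal{V}_{j_2}$ against the upper bound on $\dim U$ forced by the small fiber over the special point $B^k_{j_1}$.
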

\begin{proof}
Let $\mathcal{C}_j^k$ denote the locus of unscaled parameterized conics $(C_1, \dots, C_k)$ in $\mathcal{C}^k$ such that $C_i$ and $C_{i+1}$ satisfy the parameterized tangency condition for $1 \leq i < j$ and $C_i$ are general for $i > j$.  In particular, $B_j^k \in \mathcal{C}_j^k$. We first compute the dimension of the locus $\mathcal{C}_j^k$. As in the proof of Proposition \ref{goodComp}, a general unscaled parameterized conic depends on $3(n-2) + 9$ parameters ($3(n-2)$ for the choice of the plane of the conic and $9$ for the three degree 2 polynomials into this plane).  Given a conic $C_1$ there is an $n+4$ parameter family of conics satisfying the parameterized tangency condition with respect to $C_1$ (There is a 1 parameter family of tangent lines to $C_1$. Given a fixed tangent line $l$ there is an $(n-2)$-dimensional family of planes containing $l$. Then there is a 5-dimensional space of unscaled parameterized conics with the given data as in the proof of Theorem \ref{twoConicClassification}.) Consequently, \begin{align*}\dim(\mathcal{C}_j^k) &= (k-j+1)(3(n-2)+9) + (j-1)(n+4) & \\  &= k(3n+3) - (j-1)(2n-1).&\end{align*} 

For a general point $x=(C_1, \dots, C_k) \in \mathcal{C}_j^k$, the set of partial derivatives of the $C_i$ span a linear space of dimension $2k-j+1$, since there are $2k$ partial derivatives, and $j-1$ relations that overlap from the parameterized tangency conditions. Let $p= (p_0, \dots, p_n)$ be a point of the span of these partial derivatives. Then the relation $\sum p_i f_i=0$ is a polynomial in $s,t$ of degree $e+1$, hence imposes at most $(e+2)$ conditions on the fibers of $\pi_1$. We conclude that the dimension of $\pi_1^{-1}(x)$ is at least $(e+1)(n+1)-1 - (2k-j+1)(e+2)$. 

On the other hand, consider the  dimension of  $\pi_1^{-1}(B_j^k)$. The $f_i$ satisfy the relations $$f_i = \left(\frac{s}{t}\right)^i f_0\quad \mbox{for} \ \  1 \leq i \leq j+1$$ and $$f_{3i-2j} = \frac{s}{t} f_{3i-2j-1}, \quad f_{3i-2j+1} = \left(\frac{s}{t}\right)^2 f_{3i-2j-1} \quad \mbox{for}  \  j<i\leq k.$$ Hence, $f_0$ can be chosen freely subject to the condition that it is divisible by $t^{j+1}$. This determines $f_i$ for $1 \leq i \leq j+1$. Then for $j < i \leq k$, the entry $f_{3i-2j-1}$ can be chosen freely subject to the condition that it is divisible by $t^2$. This determines $f_{3i-2j}$ and $f_{3i-2j+1}$. All remaining $f_i$ are free. We conclude that $$\dim(\pi_1^{-1}(B_j^k)) = (e+1)(n+1)-1 - (2k-j+1)(e+2).$$ Hence, the general fiber of $\pi_1$ over $\mathcal{C}_j^k$ is irreducible of dimension $(e+1)(n+1)-1 - (2k-j+1)(e+2).$ We conclude that there is a component $\mathcal{V}_j$ of $\pi_1^{-1}(\mathcal{C}_j^k)$  with  $$\dim(\mathcal{V}_j) = (e+1)(n+1)-1 + k(3n-2e-1) + (j-1)(e-2n+3).$$  We warn the reader that $\mathcal{V}_j$  is typically  not a component of the incidence correspondence. However, we will shortly show that each $\mathcal{V}_j$ has to be contained in a distinct irreducible component $\mathcal{P}_j$ of the incidence correspondence $\pi_1^{-1}(\mathcal{C}^k)$.

Suppose there exists an irreducible component $U$ containing $\mathcal{V}_{j_1}$ and $\mathcal{V}_{j_2}$ for $j_1 < j_2$. Then $\pi_1(U)$ contains $B_{j_1}^k$. Hence, the general fiber dimension of $\pi_1$ restricted to $U$ is at most $(e+1)(n+1)-1 - (2k-j_1+1)(e+2).$ Hence, the dimension of $U$ is at most $\dim(\mathcal{C}^k) + (e+1)(n+1) -1- (2k-j_1+1)(e+2).$ However, the dimension of $\mathcal{V}_{j_2}$ is $(e+1)(n+1)-1 + k(3n-2e-1) + (j_2-1)(e-2n+3)$. We bound $$\dim(\mathcal{V}_{j_2}) - \dim(U) \geq (j_2-j_1)e + 3j_2 -2j_1 - 2j_2 n +2n -1 \geq e + 2n +2 - 2 kn.$$ By our assumption on $e$, this number is positive. This is a contradiction. We conclude that $\mathcal{V}_j$ belong to different components for each $1 \leq j \leq k$.  

Now consider the projection $\pi_2(\mathcal{V}_j)$. We will shortly see that the general member of $\mathcal{V}_j$ has the desired normal bundle. Consequently, there are exactly $k$ independent conic relations among the rows of $\partial f$ and the general fiber dimension of $\pi_2$ restricted to $\mathcal{P}_j$ is $k^2$. In fact, $\pi_2$ is generically a $\GL(k)$-bundle corresponding to choices of bases for the conic relations among the columns of $\partial f$. Consequently, $\pi_2(\mathcal{P}_j)$ is a distinct irreducible component of $\M(b_{\bullet}(2^k))$ for each $1 \leq j \leq k$. 

Finally, using Sacchiero's construction, we see that there are unramified maps in these loci that lie in $M(b_{\bullet}(2^k))$. Using the division algorithm, write $2e-2 - 2k = q(n-k-1) + r$ with $0 \leq r < n-k-1$. We construct a curve with $$N_{f} = \OO(e+2)^k \oplus \OO(e+q)^{n-k-1-r} \oplus \OO(e+q+1)^r.$$  The construction depends on whether $n-k$ is odd or even.
\begin{itemize}
\item If $n-k$ is odd, then define the sequence
\begin{equation}\label{seqa1}
1,1,\dots,1,1,x_1, 1,1,x_2, \dots, 1, 1, x_{k-j}, 1, x_{k-j+1}, 1, x_{k-j+2} , \dots, x_{\frac{n-k-1}{2}}, 1,
\end{equation}
 where there are $j+1$ $1$'s at the beginning, $x_1 = \dots =x_{\frac{r}{2}} = q$ and $x_{\frac{r}{2}+1} = \dots =  x_{\frac{n-k-1}{2}} = q-1$.  Set $k_0 = e$ and for $1 \leq i \leq n$, let $k_{i-1} - k_{i}$ equal to the $i$th entry of  Sequence (\ref{seqa1}).   Let $f$ be the monomial map 
$$f = (s^{k_0}, s^{k_1}t^{e-k_1}, s^{k_2}t^{e-k_2}, \dots, s^{k_{n-1}} t^{e-k_{n-1}}, t^{e-k_n}) $$
Write $N_f \cong \bigoplus_{i=1}^{n-1} \OO(e+b_i)$.  By Theorem \ref{monomialSplitting}, the $b_i$ are given by $k_{i-1} - k_{i+1} = k_{i-1} - k_i + k_i - k_{i+1}$, which is the sum of the $i$th and $(i+1)$st entries in the sequence. Hence the $b_i$ have the required form. Furthermore, by the proof of Theorem \ref{monomialSplitting}, $\partial f$ satisfies the desired relations.

\item If $n-k$ is even, then define the sequence
 \begin{equation}\label{seqa2}
  1,1, \dots, 1, 1, x_1, 1, 1, x_2, \dots, 1, 1, x_{k-j}, 1, x_{k-j+1}, 1, x_{k-j+2}, \dots, x_{\frac{n-k}{2}-1}, 1,
 \end{equation}
where there are $j+1$ $1$'s at the beginning, $x_1 = \dots =x_{\lfloor \frac{r}{2} \rfloor} = q$ and $x_{\lfloor \frac{r}{2} \rfloor+1 } = \dots =  x_{\frac{n-k}{2}-1} = q-1$.  We invoke Corollary \ref{cor-examples} with $\delta_i$ given by Sequence (\ref{seqa2}). As in the proof of Theorem \ref{twoConicClassification}, the hypotheses of  Corollary \ref{cor-examples} hold and there is a curve with the required form and relations.

\end{itemize}

\end{proof}

The above proof suggests a way to possibly get many more irreducible components of $\M(b_{\bullet}(2^k)$. Let $\mu$ be a partition of $k$ with $h$ parts $k= k_1 + k_2 \dots + k_h$. Let $\mathcal{C}_{\mu}$ be the locus of unscaled parameterized conics $(C_1, \dots, C_k)$, where consecutive $C_i$ satisfy the parameterized tangency condition according to whether their indices are in the same part of the partition $\mu$. In other words $C_i$ and $C_{i+1}$ satisfy the parameterized tangency condition for any index $i$ with  $\sum_{j=1}^l k_j < i <  \sum_{j=1}^{l+1} k_j$ for some $0 \leq l < h$. Let $\mathcal{P}_{\mu} =  \pi_1^{-1}(\mathcal{C}_{\lambda}^k)$. By an argument identical to that of Theorem \ref{thm-moreconics}, if $\mu$ and $\nu$ have different numbers of parts, the loci $\mathcal{P}_{\mu}$ and $\mathcal{P}_{\nu}$ belong to different components if $e$ is sufficiently large. However, if $\mu$ and $\nu$ have the same number of parts, it is possible that $\mathcal{P}_{\mu}$ and $\mathcal{P}_{\nu}$ could both be in the closure of another larger component. We pose the following natural question.

\begin{question}
Let $\mu$ and $\nu$ be two different partitions of $k$. Do $\mathcal{P}_{\mu}$ and $\mathcal{P}_{\nu}$ belong to different irreducible components of the incidence correspondence?
\end{question}

\begin{remark}
Since it is possible to construct elements in $\mathcal{P}_{\mu}$ that map to $\M(b_{\bullet}(2^k))$ under $\pi_2$, a positive answer to the question would imply that the number of irreducible components of $\M(b_{\bullet}(2^k))$ is at least the number of partitions of $k$ provided that $n \geq 3k-1$ and $e$ is sufficiently large. This would  provide superpolynomial growth for the number of components.
\end{remark}

\section{Higher degree relations}\label{sec-higherdegree}
In this section, we generalize the discussion for $\M(b_{\bullet}(2^k))$ to $\M(b_{\bullet}(d^k))$ and for $n\geq 5$ exhibit multiple irreducible components of  $\M(b_{\bullet}(d^k))$. 

\begin{theorem}
\label{higherDegreeRelations}
For $n \geq 3k-1$ and $k \geq 2$ even, $\M(b_{\bullet}(d^k))$ has at least $\frac{k}{2} + 1$ components for $e \geq k(d+1)(n+1)$.
\end{theorem}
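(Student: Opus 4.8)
The plan is to mimic the structure of the proof of Theorem \ref{thm-moreconics}, but with degree $d$ relations in place of conics, and, crucially, to replace the single parameterized tangency condition by a richer hierarchy of degeneracy conditions on a $j$-tuple of degree $d$ relations. First I would set up the incidence correspondence $\aA$ of pairs $(a_1, \dots, a_k, f)$ with $f \in \M(b_{\bullet}(d^k))$ and $\sum_j a_{ij}\partial_l f_j = 0$, with projections $\pi_1$ to the space $\cC^k$ of $k$-tuples of unscaled parameterized degree $d$ curves and $\pi_2$ to $\Mor_e(\PP^1,\PP^n)$. The key input is that when $2j$ of the relations organize themselves into $j$ consecutive tangency-type pairs (each pair $a_{2i-1}, a_{2i}$ sharing a plane along a line, with the chains linked so that successively fewer coordinates are involved), the span of the partial derivatives of the $a_i$ drops in dimension, so the relations $\sum_l p_l f_l = 0$ coming from that span impose fewer conditions on the fibers of $\pi_1$. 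The $\frac{k}{2}+1$ components will correspond to chaining up $0, 1, 2, \dots, \frac{k}{2}$ of these pairs (leaving the rest general): for each value $j \in \{0, 1, \dots, k/2\}$ I get a locus $\cC_j^k \subseteq \cC^k$, I compute $\dim \cC_j^k$ by the Grassmannian-plus-coefficients count as in Proposition \ref{goodComp}, I compute the generic fiber dimension of $\pi_1$ over $\cC_j^k$ from the dimension of the span of partials, and I exhibit one special point $B_j$ (a Sacchiero-type or monomial construction) whose $\pi_1$-fiber has exactly that dimension, forcing $\pi_1^{-1}(\cC_j^k)$ to contain an irreducible piece $\mathcal{V}_j$ of the expected dimension.

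Next I would run the separation argument: if a single irreducible component $U$ of $\aA$ contained both $\mathcal{V}_{j_1}$ and $\mathcal{V}_{j_2}$ with $j_1 < j_2$, then $\pi_1(U)$ contains $B_{j_1}$, so the generic fiber of $\pi_1|_U$ is no bigger than the fiber over $B_{j_1}$, bounding $\dim U \leq \dim \cC^k + (\text{fiber dim over } B_{j_1})$; comparing this with $\dim \mathcal{V}_{j_2}$ gives a gap that grows linearly in $e$ with a coefficient like $(j_2 - j_1)$, so the hypothesis $e \geq k(d+1)(n+1)$ makes the gap strictly positive and yields a contradiction. This shows the $\mathcal{V}_j$ lie in distinct components $\mathcal{P}_j$ of $\aA$ for $j = 0, 1, \dots, k/2$. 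Then, as in Theorem \ref{thm-moreconics}, I observe that $\pi_2$ restricted to $\mathcal{P}_j$ is generically a $\GL(k)$-bundle onto its image (a choice of basis for the $k$-dimensional space of degree $d$ relations on $\partial f$, using that the general member of $\mathcal{V}_j$ actually has normal bundle $N_f \cong \OO(e+d)^k \oplus \cdots$ of the prescribed type), so that $\pi_2(\mathcal{P}_j)$ is a genuine irreducible component of $\M(b_{\bullet}(d^k))$, and distinct $j$ give distinct components. Finally I would use Corollary \ref{cor-examples} (Sacchiero's construction), together with the monomial computation of Theorem \ref{monomialSplitting} in the cases where the relevant parity works out, to produce, for each $j$, an explicit unramified $f$ in $\mathcal{V}_j \cap \M(b_{\bullet}(d^k))$ realizing the chained configuration of $2j$ degree $d$ relations, which both confirms nonemptiness and pins down the fiber dimension used above; this is where the hypothesis $e \geq k(d+1)(n+1)$ (hence $2e$ large enough for the numerology $2e-2-dk = q(n-1-k)+r$ and for $b_{n-1} \geq \max\{b_i\}-1$) gets used a second time.

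The main obstacle I anticipate is the degree $d$ analogue of the parameterized tangency analysis of Lemma \ref{paramTangencyLemma}: for conics the "tangency pair" is geometrically transparent (two conics in planes meeting along a line, each tangent to that line, giving $\partial_s\alpha$ proportional to $\partial_t\beta$), but for degree $d$ curves one must identify the right degeneracy condition on a pair (or chain) of relations that causes the span of their $2$-dimensional spaces of partials to collapse by the predicted amount, and then both (a) compute the dimension of the locus of such configurations inside $\cC^k$ and (b) verify that a concrete construction achieves the minimal fiber dimension. Getting the bookkeeping of these overlaps right — so that $j$ pairs reduce the span dimension by exactly $j$ and the dimension of $\cC_j^k$ comes out to something like $\dim\cC^k - j\cdot(\text{const}\cdot n)$ with the constant beating the $e$-linear terms — is the delicate part; everything downstream (the separation inequality, the $\GL(k)$-bundle structure of $\pi_2$) is then a routine adaptation of the $d=2$ argument. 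The requirement $k$ even is exactly what makes the "pair up $2j$ of the $k$ relations" scheme produce $\frac{k}{2}+1$ clean cases rather than needing a leftover odd relation, and $n \geq 3k-1$ is what guarantees enough coordinates for $k$ disjoint-plane relations (equivalently positivity of the generic $\pi_1$-fiber dimension).
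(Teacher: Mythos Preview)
Your overall architecture matches the paper's: the same incidence correspondence, the same separation-by-fiber-dimension argument, the same $\GL(k)$-bundle conclusion, and the same use of Sacchiero's construction (Corollary~\ref{cor-examples}) to exhibit witnesses. The pairing structure you describe (for each $j$, take $j$ overlapping pairs $a_{2i-1},a_{2i}$ and leave the remaining $k-2j$ relations general) is exactly the paper's construction of the points $B_j$.

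The one genuine difference is that you plan to compute $\dim \cC_j^k$ (the locus of $k$-tuples satisfying the degree-$d$ tangency constraints) and then set $\dim\mathcal{V}_j=\dim\cC_j^k+(\text{generic fiber of }\pi_1)$; you correctly flag this as the main obstacle, since the parameterized-tangency analysis of Lemma~\ref{paramTangencyLemma} has no obvious clean analogue for degree $d>2$. The paper sidesteps this entirely. It never stratifies the parameter space or computes $\dim\cC_j^k$: the separation argument only compares the fiber $\pi_1^{-1}(B_{j_2})$ (a single fiber, whose dimension is computed directly by solving the explicit linear system) against the crude bound $\dim U\le \dim\mathcal{D}^k+\dim\pi_1^{-1}(B_{j_1})$, using $\dim\mathcal{D}^k=k(d+1)(n+1)$. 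Since $\pi_1^{-1}(B_{j_2})\subseteq U$ forces $\dim U\ge\dim\pi_1^{-1}(B_{j_2})$, the inequality $(j_2-j_1)(e+d)>k(d+1)(n+1)$ already gives the contradiction. So what you identify as the hard part simply never needs to be done; the paper trades the finer stratified estimate (which would give a sharper bound on $e$) for a proof that requires only elementary linear algebra over the $k/2+1$ explicit points $B_j$.
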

\begin{proof}
Let $\mathcal{D}^k$ be the space of ordered $k$-tuples of independent unscaled parameterized degree $d$ rational curves.  Let $I_{k,e}$ be the incidence correspondence parameterizing pairs $(D, f)$, where $f$ is a degree $e$ rational curve and $D\in \mathcal{D}^k$ is a set of $k$ independent degree $d$ relations among the columns of $\partial f$. Let $\pi_1$ and $\pi_2$ denote the two projections to $\mathcal{D}^k$ and $\Mor_e(\PP^1, \PP^n)$, respectively.  We find $\frac{k}{2} + 1$ components of $\M(b_{\bullet}(d^k))$.  

First, for $0 \leq j \leq \frac{k}{2}$, we construct an element $B_j = (B_{j,1}, \dots, B_{j,k}) \in \mathcal{D}^k$, where for $1 \leq i \leq j$, $B_{j,2i-1}$ and $B_{j,2i}$ are given by
$$\begin{array}{rrrrrr} ( \dots 0, & (d-1)t^d, &-dst^{d-1}, & s^d, & 0, & 0 \dots ) \\ ( \dots 0, & 0, & t^d, &  -ds^{d-1}t, & (d-1)s^d, & 0 \dots ) \end{array},$$
where the nonzero coordinates are $x_u$ for $4i-4 \leq u \leq 4i-1$.  For $i > j$, we let $B_{j,2i-1}$ and $B_{j,2i}$ be given by
$$\begin{array}{rrrrrrrr} (\dots 0, & t^d, &-ds^{d-1}t, &(d-1)s^d,& 0,& 0, &0 , & 0 \dots ) \\ (\dots 0, & 0, & 0,&  0, & (d-1)t^d,&  -dst^{d-1}, & s^d,& 0 \dots )\end{array}, $$
where the nonzero coordinates are $x_u$ for $6i-2j-6 \leq u \leq 6i-2j-1$.

We work out the dimensions of the fibers of $\pi_1$ over $B_j$.  It is clear from the definitions that for $i_1 \neq i_2$, the conditions imposed on the fibers of $\pi_1$ by the pair $B_{j,2i_1-1}, B_{j,2i_1}$ and the pair $B_{j,2i_2-1}, B_{j,2i_2}$ are independent.  For $i \leq j$, the matrix of partial derivatives of $B_{j,2i-1}$ and $B_{j,2i}$ is
\[ A = \left[ \begin{array}{llll}
d(d-1)t^{d-1} & -d(d-1)st^{d-2} & 0 & 0 \\
0 & -d t^{d-1} & ds^{d-1} & 0 \\
0 & d t^{d-1} & -ds^{d-1} & 0 \\
0 & 0 & -d(d-1)s^{d-2}t & d(d-1) s^{d-1}
\end{array} \right] .\]
From the relations 
\[ A \left[ \begin{array}{l} f_{4i-4} \\ f_{4i-3} \\ f_{4i-2} \\ f_{4i-1}
\end{array} \right] = 0\]
we see that $s^{d+1}$ divides $f_{4i-4}$, but that subject to that condition, $f_{4i-4}$ can be chosen freely and this choice completely determines $f_{4i-3}, f_{4i-2}$, and $f_{4i-1}$.  This makes for $3(e+1) + d+1 = 3e+d+4$ conditions.  For $i > j$, we see by a similar calculation that $B_{j,2i-1}$ and $B_{j,2i}$ impose $2(e+d+e+2) = 4e+2d+4$ conditions.  Thus, the total number of conditions imposed is $j(3e+d+4)+(k-2j)(2e+d+2) = k(2e+d+2)-j(e+d)$.

For future use, we construct an example of a curve in the fiber $\pi_1^{-1}(B_j)$ corresponding to an unramified nondegenerate map that lies in $\M(b_{\bullet}(d^k))$.  Let $q$ and $r$ be defined by $2e-2-kd = q(n-1-k)+r$ with $0 \leq r < q$.  Consider the following sequence $\delta_1, \dots, \delta_{n-1}$.  For $1 \leq i \leq 4j$ the sequence repeats the length four pattern $1, d-1, 1, x_l$ and looks like
\[1, \ d-1, \ 1, \ x_1, \ 1, \ d-1, \ 1, \ x_2, \ \dots \ 1, \ d-1, \ 1, \ x_j .\]
For $ 4j+1 \leq i \leq 3k-2j$ the sequence repeats the length six pattern
\[ 1, \ d-1, \ x_l - (d-2), \ d-1, \ 1, \ x_{l+1} \]
and looks like
\[ 1, \ d-1, \ x_{j+1} - (d-2), \ d-1, \ 1, \ x_{j+2}, \dots, \ 1, \ d-1, \ x_{k-j-1} - (d-2), \ d-1, \ 1, \ x_{k-j} . \]
Finally, for $3k-2j+1 \leq i \leq n-1$, the sequence repeats the length two pattern $1, x_{l}$ and looks like
$$1, \ x_{k-j+1}, \ 1, \  x_{k-j+2}, \dots $$
Here, $x_1 = \dots = x_{\lfloor \frac{r}{2} \rfloor} = q$ and $x_{\lfloor \frac{r}{2} \rfloor + 1} = \dots = x_{\lfloor \frac{n-k-1}{2} \rfloor} = q-1$.  For example, if $n=19$, $e=41$, $d=3$, $k=6$, and $j=2$, we have $q = 5$ and $r=2$ and we have the sequence
\[ 1, 2, 1, 5 , 1,2,1,4, 1, 2, 3, 2, 1, 4, 1, 4, 1, 4 . \]
By Corollary \ref{cor-examples}, there exists a curve with the required normal bundle and relations.

We now argue that for $j_1 < j_2$, $\pi_1^{-1}(B_{j_1})$ and $\pi_1^{-1}(B_{j_2})$ lie in different irreducible components of $\pi_1^{-1}(\mathcal{D}^k)$.  To get a contradiction, suppose that some component $U$ of $\pi_1^{-1}(\mathcal{D}^k)$ contains both $\pi_1^{-1}(B_{j_1})$ and $\pi_1^{-1}(B_{j_2})$.  Then since $\pi_1^{-1}(B_{j_1})$ lies in $U$, the general fiber of $\pi_1$ restricted to $U$ has dimension at most $(e+1)(n+1)-1 - k(2e+d+2)+j_1(e+d)$, which shows that the dimension of $U$ is at most $\dim \mathcal{D}^k + (e+1)(n+1) -1- k(2e+d+2)+j_1(e+d)$.  However, the dimension of $\pi_1^{-1}(B_{j_2})$ is at least $(e+1)(n+1) -1- k(2e+d+2)+j_2(e+d)$.  Bounding $\pi_1^{-1}(B_{j_2}) - \dim U$, we get
\[  \pi_1^{-1}(B_{j_2}) - \dim U  \geq (j_2-j_1)(e+d) - k(d+1)(n+1) \geq e+d - k(d+1)(n+1) > 0 \]
by our assumption on $e$.

The examples show that any irreducible component of the incidence correspondence containing a $B_j$ is generically a $\GL(k)$ bundle over its image in $\pi_2$, so $\pi_2(\pi_1^{-1}(B_j))$ all lie in different components for each $0 \leq j \leq \frac{k}{2}$.  The result follows.

\end{proof}

\section{Examples}\label{sec-examples}
In this section, we discuss some basic examples of strata of rational curves with fixed normal bundle.  We construct an example of a stratum of rational curves in $\PP^4$  with higher than expected dimension.  We find examples of reducible strata of curves in $\PP^5$.  We show that a natural generalization of the example of Alzati and Re \cite{AlzatiRe1} has at least three reducible components.  Finally, we provide an example of reducible strata $M_e(b_{\bullet})$ with $b_1 \neq b_2$.

\subsection{Conics in $\PP^4$}
Many of the results in section \ref{sec-conics} were only for $n \geq 5$.  In this section we completely describe $\M(b_{\bullet}(2^2))$ for degree $e$ curves in $\PP^4$.

\begin{proposition}
For $e \geq 5$ and $n=4$, $\M(b_{\bullet}(2^2))$ is irreducible of dimension $2e+18$.  This is larger than the expected dimension for $e \geq 6$.
\end{proposition}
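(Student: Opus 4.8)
The plan is to re-run the analysis of Theorem \ref{twoConicClassification} in the case $n=4$, where the geometry of $2$-planes in $\PP^4$ collapses the two components found there into one. As in that argument, I would first invoke Lemma \ref{lem-line}, Lemma \ref{lem-reducible}, and the corollary they imply (all valid for $n=4$) to reduce to curves $f$ whose two independent degree-two relations on $\partial f$ define smooth, nondegenerate conics $C_1,C_2\subset\PP^{4*}$ spanning distinct planes $P_1,P_2$; nonemptiness of $\M(b_{\bullet}(2^2))$ for $e\geq 4$ follows from Corollary \ref{cor-examples} with $\delta=(1,1,1)$. Since any two $2$-planes in $\PP^4$ meet, $P_1\cap P_2$ is either a point or a line, and there is no "independent planes" case (correspondingly, the construction of Proposition \ref{goodComp} requires $3k\leq n+1$, which fails for $n=4$, $k=2$).

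The heart of the argument is that the "planes meet in a point" case is empty once $e\geq 5$. Choose coordinates so that $C_1$ involves only $x_0,x_1,x_2$ and $C_2$ only $x_2,x_3,x_4$, with $P_1\cap P_2$ the coordinate point dual to $x_2$. By Lemma \ref{lem-basicpolynomial}, $(f_0,f_1,f_2)$ is a degree-$e$ syzygy of the partials of $C_1$, hence equals $h_1\,w_1$ where $w_1$ is the (degree two) generator of the syzygy module of $\partial C_1$ and $h_1$ is a polynomial of degree $e-2$; similarly $(f_2,f_3,f_4)=h_2\,w_2$. Matching the shared entry $f_2=h_1(w_1)_3=h_2(w_2)_1$ and clearing the (at most quadratic) common factor of $(w_1)_3$ and $(w_2)_1$ shows that $f_0,\dots,f_4$ are all divisible by a common polynomial of degree at least $e-4$. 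For $e\geq 5$ this contradicts $\gcd(f_0,\dots,f_4)=1$, so no such $f$ lies in $\Mor_e(\PP^1,\PP^4)$. This is precisely the $n=4$ specialization of Lemma \ref{lem-conicsonepoint}, with the crucial difference that $\PP^4$ leaves no free coordinates to absorb the common factor. Consequently every curve in $\M(b_{\bullet}(2^2))$ has $P_1\cap P_2$ a line, and then Lemma \ref{paramTangencyLemma} forces $C_1$ and $C_2$ to satisfy the parameterized tangency condition.

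It remains to prove that the resulting locus is irreducible of dimension $2e+18$. As in the proof of Theorem \ref{twoConicClassification}, I would work with the incidence correspondence $\aA$ over the locus of parameterized-tangency conic pairs in $\mathcal{C}^2$: the fibers of $\pi_1$ there are irreducible of the dimension computed in Lemma \ref{paramTangencyLemma} (which is $2e-2$ for $n=4$), so the relevant piece of $\overline{\aA}$ is irreducible, and $\pi_2$ is generically a fiber bundle over its image in $\M(b_{\bullet}(2^2))$. Combining the dimension of the parameterized-tangency locus in $\mathcal{C}^2$, the fiber dimensions of $\pi_1$ and $\pi_2$, and the one coordinate of $f$ that remains free in $\PP^4$, a dimension count yields $\dim\M(b_{\bullet}(2^2))=2e+18$. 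An explicit curve realizing the general point is again supplied by Corollary \ref{cor-examples} with $\delta=(1,1,1)$ (equivalently, $p\cdot s$ times the rational normal cubic in the first four coordinates, together with a general $t^4 q$). Finally, by Lemma \ref{expCodimLemma} the expected dimension is $e+22$, which is strictly smaller than $2e+18$ for $e\geq 6$.

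The step I expect to be the main obstacle is a clean derivation of the "point" case being empty that is uniform in the conics $C_1,C_2$ — one must track the possible common roots among the quadratics $(w_1)_3$, $(w_2)_1$ and the associated Wronskians for special (but still smooth and nondegenerate) conics, where the naive degree bookkeeping can shift. A second, more bookkeeping-flavored, subtlety is pinning down the exact constant in the dimension count: understanding precisely why the $\PP^4$ answer exceeds by one the value $(n-2)e+5n-3$ predicted by the $n\geq 5$ formula of Theorem \ref{twoConicClassification}.
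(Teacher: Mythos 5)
Your overall route parallels the paper's: reduce to two smooth conics spanning distinct planes, split according to whether the planes meet in a point or a line, kill the point case, and then get parameterized tangency plus the dimension count of Lemma \ref{paramTangencyLemma}. The genuinely different ingredient is your exclusion of the point case. The paper argues via Lemma \ref{lem-basicpolynomial} that the four partial derivatives of the two conics give a $4$-dimensional space of linear syzygies of $f$, so a nondegenerate $f$ would have $f^*T_{\PP^4}\cong\OO(e+1)^4$, impossible by degree once $e\geq 5$. You instead use that the syzygy module of $\partial C_i$ for a smooth conic is free on a single generator $w_i$ of degree $2$, match the shared coordinate $f_2=h_1(w_1)_3=h_2(w_2)_1$, and extract a common factor of degree at least $e-4$ dividing all five $f_i$. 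This is correct: the nonvanishing of $(w_1)_3$ and $(w_2)_1$ is exactly the nonvanishing of the Wronskians $\det M_{i,j}$ established inside Lemma \ref{lem-conicsonepoint}, and any common factor of $(w_1)_3,(w_2)_1$ only increases the degree of the common divisor, so the ``main obstacle'' you flag is not actually an issue. It is a valid, more hands-on alternative to the paper's restricted-tangent-bundle argument; the remaining steps (nonemptiness via Corollary \ref{cor-examples} with $\delta=(1,1,1)$, irreducibility via the fibration of the incidence correspondence over the tangency locus) are at the same level of detail as the paper.

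The defect is the final bookkeeping. The extra ``$+1$ for the one coordinate of $f$ that remains free in $\PP^4$'' is double counting: the fiber dimension $(e+1)(n+1)-3e-7$ of Lemma \ref{paramTangencyLemma} already accounts for the freedom of every coordinate beyond $f_0,\dots,f_3$, so with the normalization $\dim\Mor_e(\PP^1,\PP^n)=(e+1)(n+1)-1$ your own inputs give $23+(2e-2)-4=2e+17=e(n-2)+5n-3$ at $n=4$; there is no genuine excess over the general formula of Theorem \ref{twoConicClassification}. The paper's figures $2e+18$ and $e+23$ both arise from using $(e+1)(n+1)$ without the $-1$ (its fiber there is $(e+1)(n+1)-3(e+2)=2e-1$), so the same shift appears in the actual and the expected dimension and the conclusion ``larger than expected exactly when $e\geq 6$'' is convention-independent (in the normalized convention it reads $2e+17$ versus $e+22$). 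So you should carry one convention consistently through both dimension counts rather than introduce an extra free parameter special to $\PP^4$, which is not there.
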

\begin{proof}
First we show that if $f$ is a degree $e$ rational curve in $\PP^4$ such that the relations among the columns of $\partial f$ correspond to two conics in the dual space whose planes meet in a point, then $f$ is degenerate.  To see this, note that for two such conics, their partial derivatives span a $4$-dimensional vector space of degree $1$ maps to $\PP^{4*}$.  By Lemma \ref{lem-basicpolynomial} the partial derivatives give a $4$-dimensional space of linear forms $a_i$ such that $\sum_{j} a_{ij}f_j = 0$.  This shows that if $f$ were nondegenerate (which would imply $f^* T_{\PP^n}$ contains no $\OO(e)$ factors), the restricted tangent bundle $f^{*}T_{\PP^n}$ would be $\OO(e+1)^4$, which is impossible by degree considerations.  Thus, any such $f$ must be degenerate.

There is another component, however, corresponding to pairs of conics satisfying the parameterized tangency condition by Lemma \ref{paramTangencyLemma}.  We compute the dimension of this locus. The dimension of the space of unscaled parameterized conics in $\PP^4$ is $3(n-2)+9 = 15$.  Given the first conic, there is a $1$-dimensional choice of tangent lines, then an $2$-dimensional family of planes containing this tangent line, followed by a $5$-dimensional family of conics satisfying the parameterized tangency condition, for a total of $8$ dimensions.  Thus, this corresponds to a $23$-dimensional locus in $\mathcal{C}^2$.  The fiber of $\pi_1$ over this locus is $(e+1)(n+1) - 3(e+2) = 2e-1$-dimensional.  The fibers of $\pi_2$ over this locus are $4$-dimensional, so the dimension of this family is $2e-1 + 23 - 4 = 2e+18$.  The expected dimension of $\M(b_{\bullet}(2^2))$ is $5(e+1) - (4e-18) = e+23$, so we see that for $e \geq 6$, this has larger than expected dimension.
\end{proof}
\subsection{An example in $\PP^5$}
We can find the smallest example where $\M(b_{\bullet}(2^2))$ has two components.  In particular, note that both $e$ and $n$ are smaller than the $e=11$, $n=8$ example discovered by Alzati and Re.

\begin{corollary}
The space $\M(b_{\bullet}(2^2))$ has two components for $e \geq 2n-3$, $n \geq 5$.  In particular for $n = 5$, $e = 7$, $\M(b_{\bullet}(2^2))$ is reducible.
\end{corollary}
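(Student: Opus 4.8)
The plan is essentially to specialize Theorem~\ref{twoConicClassification}. That theorem already asserts that for $n \geq 5$ and $e \geq 2n-3$ the locus $\M(b_{\bullet}(2^2))$ has \emph{precisely} two components, namely $\mathcal{G}$ (where the two conic relations on $\partial f$ span disjoint planes in $\PP^{5*}$) and $\mathcal{PT}$ (where they satisfy the parameterized tangency condition). So the first assertion of the corollary is immediate, and for the second assertion I only need to check that $(n,e)=(5,7)$ lies in the admissible range.

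First I would record the arithmetic: for $n=5$ the bound $2e \geq (n-1)d+n-k+1$ from Lemma~\ref{expCodimLemma} (with $d=k=2$) reads $2e \geq 12$, so $b_{\bullet}(2^2)$ is defined once $e \geq 6$; explicitly $b_{\bullet}(2^2)=(2,2,4,4)$, i.e.\ $N_f \cong \OO(9)^2 \oplus \OO(11)^2$. Next, the hypothesis $e \geq 2n-3$ of Theorem~\ref{twoConicClassification} becomes $e \geq 7$, which is satisfied by $e=7$. Applying Theorem~\ref{twoConicClassification} with $(n,e)=(5,7)$ then gives two irreducible components of $\M(b_{\bullet}(2^2))$, so this locus is reducible.

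I would also add a clarifying remark, since $(n,e)=(5,7)$ is exactly the boundary case $e=2n-3$: here the dimension formulas of Theorem~\ref{twoConicClassification} give $\dim\mathcal{G}=e(n-3)+7n-6=43$ and $\dim\mathcal{PT}=e(n-2)+5n-3=43$, so the two components have equal dimension. They are nonetheless genuinely distinct components — this is guaranteed by the case analysis underlying Theorem~\ref{twoConicClassification} (the general member of $\mathcal{G}$ has conic relations in disjoint planes, while the general member of $\mathcal{PT}$ has conic relations in planes meeting along a line and tangent to it), together with Corollary~\ref{twoConicsCor} and Lemma~\ref{paramTangencyLemma}, which show every $f$ falls into one of these two classes. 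There is no real obstacle: the only content beyond citing Theorem~\ref{twoConicClassification} is the observation that $2\cdot 5 - 3 = 7$, which exhibits $(n,e)=(5,7)$ as a reducible example with both $n$ and $e$ strictly smaller than the Alzati--Re example $(n,e)=(8,11)$.
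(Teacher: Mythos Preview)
Your proposal is correct and follows exactly the paper's approach: the paper's entire proof is ``This follows directly from Theorem \ref{twoConicClassification},'' and you do precisely this, simply adding the verification that $2\cdot 5 - 3 = 7$ and some supplementary arithmetic. The extra computations you include (the explicit splitting $\OO(9)^2 \oplus \OO(11)^2$ and the observation that $\dim\mathcal{G}=\dim\mathcal{PT}=43$ at the boundary) are correct and match the remark following the corollary; one trivial slip is that in your description of $\mathcal{G}$ for general $n$ the dual space should be $\PP^{n*}$ rather than $\PP^{5*}$.
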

\begin{proof}
This follows directly from Theorem \ref{twoConicClassification}.
\end{proof}

\begin{remark}
The normal bundle of curves in $\M(b_{\bullet}(2^2))$ for $e = 7$, $n=5$ is $\OO(9)^2 \oplus \OO(11)^2$, so the expected codimension is $4$.
\end{remark}

Thus, we see that as soon as $n > 4$, we immediately start getting reducible strata.

\subsection{Alzati and Re's example} Alzati and Re \cite{AlzatiRe1} exhibit two distinct irreducible components of the locus in $\Mor_{11}(\PP^1, \PP^8)$ where the normal bundle is $\OO(13)^3\oplus \OO(14)^2 \oplus \OO(15)^2$. Theorem \ref{thm-moreconics} also implies the existence of these components. In their example, in one component the conic relations are general. In the other component, two of the conic relations satisfy the parameterized tangency condition. In fact, by increasing the degree ($e > 30$ certainly suffices), one can obtain examples with more than two components. 

\subsection{An example without duplicate lowest factors}
Finally, we work out examples of reducible $M_e(b_{\bullet})$ where the two lowest $b_i$ are distinct.

\begin{theorem}
Let $d_2 \geq d_1 \geq 2$ be integers, $e \geq (n+1)(d_1+d_2+2)-d_1$ and $n \geq 5$.  Let $q$ and $r$ be defined by $2e-2-d_1-d_2 = q(n-3)+r$, and let $b_1 = d_1$, $b_2 = d_2$, $b_3 = \dots = b_{n-r-1} = q$, $b_{n-r} = \dots = b_{n-1} = q+1$.  Then $\M(b_{\bullet})$ is reducible.
\end{theorem}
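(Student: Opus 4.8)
The plan is to adapt the incidence-correspondence technique of Theorems \ref{twoConicClassification} and \ref{thm-moreconics} to the case of two relations of \emph{different} degrees. Let $\mathcal{D}_d$ denote the space of unscaled parameterized degree $d$ rational curves in $\PP^{n*}$; as in the proof of Proposition \ref{goodComp} one checks $\dim \mathcal{D}_d = (d+1)(n+1)$. Form the incidence correspondence $\aA$ of triples $(\alpha,\beta,f)$ with $\alpha \in \mathcal{D}_{d_1}$, $\beta \in \mathcal{D}_{d_2}$, $f \in \Mor_e(\PP^1,\PP^n)$, and both $\alpha$ and $\beta$ relations among the columns of $\partial f$; let $\pi_1 : \overline{\aA} \to \mathcal{D}_{d_1} \times \mathcal{D}_{d_2}$ and $\pi_2 : \overline{\aA} \to \Mor_e(\PP^1,\PP^n)$ be the projections. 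By Lemma \ref{lem-basicpolynomial}, the condition that $\alpha$ be a relation on $\partial f$ is equivalent to $\sum_j f_j \partial_l \alpha_j = 0$ for $l \in \{s,t\}$, and similarly for $\beta$.

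I would then single out two loci in $\mathcal{D}_{d_1} \times \mathcal{D}_{d_2}$: the open locus $\mathcal{U}_{\mathrm{gen}}$ where the linear spans of $\alpha$ and $\beta$ are disjoint, and the locus $\mathcal{U}_{PT}$ where $\alpha$ and $\beta$ satisfy a generalized parameterized tangency condition, namely that, after a common reparameterization of $\PP^1$, $\partial_t\beta$ is a fixed degree $d_2-d_1$ polynomial multiple of $\partial_s\alpha$ (for $d_1=d_2$ this is the condition of the Definition). The point of this condition is that the equation $\sum_j f_j \partial_t\beta_j = 0$ then becomes a polynomial multiple of $\sum_j f_j \partial_s\alpha_j = 0$, so $\beta$ contributes one fewer condition on the fiber of $\pi_1$ than it does over $\mathcal{U}_{\mathrm{gen}}$. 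A direct computation in coordinates — exactly parallel to Lemmas \ref{paramTangencyLemma} and \ref{lem-conicsonepoint} — gives the generic fiber dimension of $\pi_1$ over each locus: over $\mathcal{U}_{\mathrm{gen}}$ the two relations impose $(d_1+d_2)(e+2)$ independent conditions, while over $\mathcal{U}_{PT}$ they impose on the order of $d_1(e+2)+(e+d_2)$ conditions, the exact number being pinned down by exhibiting a specific $(\alpha_0,\beta_0)$ whose fiber realizes it. Meanwhile $\codim(\mathcal{U}_{PT})$ is of order $d_2 n$, independent of $e$. Hence the component of $\pi_1^{-1}(\mathcal{U}_{PT})$ dominating $\mathcal{U}_{PT}$ has larger dimension than the component dominating $\mathcal{U}_{\mathrm{gen}}$ once $e$ is large; the hypothesis $e \geq (n+1)(d_1+d_2+2)-d_1$ is arranged to make this inequality hold (and also to force $q > d_2$, so that the space of relations of degree $\leq d_2$ on a curve of splitting type $b_{\bullet}$ has the expected dimension $d_2-d_1+2$). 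As in Theorem \ref{thm-moreconics}, a dimension bound then shows these two loci lie in distinct irreducible components $W_{\mathrm{gen}}$ and $W_{PT}$ of $\overline{\aA}$.

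Next I would produce explicit unramified nondegenerate $f \in \M(b_{\bullet})$ in each of $\pi_2(W_{\mathrm{gen}})$ and $\pi_2(W_{PT})$, using monomial curves (Theorem \ref{monomialSplitting}) or Sacchiero's construction (Corollary \ref{cor-examples}) with a carefully chosen difference sequence $\delta_{\bullet}$: one sequence producing two relations $R_{i_1}, R_{i_2}$ with disjoint three-element supports (this is where $n \geq 5$ enters, since six distinct coordinates are needed), and one producing a pair of relations in overlapping coordinates satisfying the parameterized tangency condition. Since the normal bundle of these curves is computed exactly, the space of relations of degree $\leq d_2$ on each has exactly the expected dimension, so the fiber of $\pi_2$ over them has the a priori minimal dimension $d_2-d_1+3$, and $\pi_2$ restricted to each of $W_{\mathrm{gen}}, W_{PT}$ is generically a bundle of this type. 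It follows that $\pi_2(W_{\mathrm{gen}}) \cap \M(b_{\bullet})$ and $\pi_2(W_{PT}) \cap \M(b_{\bullet})$ are irreducible of different dimensions; moreover the support bookkeeping for the explicit examples shows the first contains a curve admitting no tangent pair of relations and the second a curve admitting no disjoint pair, so neither closure contains the other. Hence $\M(b_{\bullet})$ is reducible.

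I expect the main obstacle to be the fiber bookkeeping for $\pi_1$ over $\mathcal{U}_{PT}$: as in Lemma \ref{lem-conicsonepoint}, unexpected cancellation can make individual fibers jump, so one must both exhibit a single pair $(\alpha_0,\beta_0)$ whose fiber has the generic dimension (controlling the jumping locus) and separately verify that the Sacchiero/monomial example really lies in $\mathcal{U}_{PT}$ with the predicted fiber. The second delicate point, again following Theorem \ref{thm-moreconics}, is ensuring the two components of $\overline{\aA}$ descend to distinct components of $\M(b_{\bullet})$ rather than collapsing under $\pi_2$; this is precisely what the explicit examples are for, since they certify that a general curve in each image has exactly the relation structure, and hence the exact $\pi_2$-fiber dimension, used in the dimension comparison.
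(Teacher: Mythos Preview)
Your approach is essentially the paper's: set up the incidence correspondence over $\mathcal{D}_{d_1}\times\mathcal{D}_{d_2}$, compare $\pi_1$-fiber dimensions over a disjoint-span pair and a parameterized-tangent pair, bound the components via upper semicontinuity, and certify both loci meet $\M(b_\bullet)$ using Corollary \ref{cor-examples} with suitable $\delta$-sequences. The paper in fact works with two specific base points rather than the full loci $\mathcal{U}_{\mathrm{gen}},\mathcal{U}_{PT}$, which slightly streamlines the bookkeeping, but the logic is the same.

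One correction you will need when you carry this out: your condition count $(d_1+d_2)(e+2)$ over $\mathcal{U}_{\mathrm{gen}}$ is the conic formula and does not hold for $d_i>2$. For a single degree-$d$ relation of the Sacchiero form $((d-1)t^d,-dst^{d-1},s^d,0,\dots)$ the two derivative equations impose only $2e+d+2$ conditions (one of them factors), so the disjoint pair imposes $4e+d_1+d_2+4$ conditions and the tangent pair $3e+d_2+4$. The saving is therefore $e+d_1$, not the $(d_2-1)e+O(1)$ your heuristic suggests, and it is this exact difference that produces the threshold $e\geq (n+1)(d_1+d_2+2)-d_1$. Once you replace your formulas by these, the rest of your outline goes through verbatim.
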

\begin{proof}
We exhibit two components of the incidence correspondence $\aA$ consisting of the set of tuples $(a_1, a_2, f)$ where $a_1$ and $a_2$ are unscaled parameterized curves of degree $d_1$ and $d_2$, $f \in \M(b_{\bullet})$ and $\sum_{i=0}^n a_{ji} \partial_{\ell} f_i = 0$ for $j \in \{1, 2\}$ and $\ell \in \{s, t\}$.

We start by finding a component of dimension close to the expected dimension.  Consider the following sequence
\[ 1, \ d_1-1, \ x_1, \ d_1-1, \ d_2-d_1+1, \ x_2, \ d_2-d_1+1, \ x_3, \ d_2-d_1+1, \ x_4, \ \dots \]
where 
$$ x_1 = q-d_1+1, \  x_2 = \dots = x_{\lfloor \frac{r}{2} \rfloor + 1} = q-d_2+d_1 $$ and the rest of the $x_i$ are $q-d_2+d_1-1$.  By Corollary \ref{cor-examples}, there is an unramified map $f$ with normal bundle $N_f = \OO(e+d_1) \oplus \OO(e+d_2) \oplus \OO(e+q+1)^r \oplus \OO(e+q)^{n-3-r}$. Furthermore, this map $f$ satisfies  syzygies  of  degrees $d_1$ and $d_2$ with respect to 
$$
a_1 = ((d_1-1)t^{d_1}, \ -d_1st^{d_1-1}, \ s^{d_1}, 0,  0,  0, \dots)$$
and
$$a_2 = (0, 0, 0, (d_2-d_1+1)t^{d_2}, \ -d_2 s^{d_1-1}t^{d_2-d_1+1},  \ (d_1-1)s^{d_2}, 0, \dots).$$
Hence, $f \in \pi_1^{-1}(a_1, a_2)$. 
We claim the dimension of $\pi_1^{-1}(a_1, a_2)$ is $(e+1)(n+1)-4e-d_1-d_2-5$. The relations $\sum_{i=0}^2 f_i \partial_s a_{1i} = 0 =  \sum_{i=0}^2 f_i \partial_t a_{1i} $ imply that $s^{d_1-1}t | f_1$ and $f_1$ determines $f_0$ and $f_2$. Similarly, the relations $\sum_{i=3}^5 f_i \partial_s a_{2i} = 0 =  \sum_{i=3}^5 f_i \partial_t a_{2i}$ imply that $s^{d_2-d_1+1}t^{d_1-1} | f_4$ and $f_4$ determines $f_3$ and $f_5$. This yields the desired fiber dimension. Since the dimension of the space of ordered pairs of unscaled parameterized curves of degrees $d_1$ and $d_2$ is at most $(n+1)(d_1+d_2+2)$, we conclude that there is a component of $\aA$ of dimension at most $(n+1)(d_1+d_2+e+3) - 4e - (d_1+d_2)-5$.

Now we show there is another component of dimension at least as large.  To show this, we need only find one pair of relations $(a_1,a_2)$ of degrees $d_1$ and $d_2$ such that the space of $f$ satisfying $\sum_{i=0}^n a_{ji} \partial_{\ell} f_i = 0$ for $j \in \{1,2\}$ and $\ell \in \{s, t\}$ contains nondegenerate unramified maps and has codimension at most $3e + d_2 + 4$. Then the dimension of this component is at least $$(n+1)(e+1) -5 - 3e - d_2.$$  A simple check shows that the inequality $e \geq (d_1+d_2+2)(n+1)-d_1$ guarantees that this dimension is at least the dimension of the previous component.  To find such an example, consider the relations
$$ ((d_1-1)t^{d_1}, \ -d_1st^{d_1-1}, \ s^{d_1}, 0, 0, \dots, 0) $$ and 
$$ (0,  (d_2-d_1+1)t^{d_2},  \ -d_2s^{d_1-1}t^{d_2-d_1+1},  \ (d_1-1)s^{d_2}, 0, \dots, 0) . $$
We see that the conditions imposed on the fiber over these relations are
\[ \left[ \begin{array}{llll}
0 & -d_1t^{d_1-1} & d_1 s^{d_1-1} & 0 \\
d_1(d_1-1)t^{d_1-1} & - d_1(d_1-1)st^{d_1-2} & 0 & 0 \\
0 & 0 & -d_2(d_1-1)s^{d_1-2}t^{d_2-d_1+1} & d_2(d_1-1)s^{d_2-1} \\
0 & d_2(d_2-d_1+1)t^{d_2-1} & -d_2(d_2-d_1+1)s^{d_1-1}t^{d_2-d_1} & 0
\end{array} \right] 
\left[ \begin{array}{l}
f_0 \\ f_1 \\ f_2 \\ f_3
\end{array} \right] = 0 .\]
The last row is a multiple of the first, and hence it imposes no new conditions on the $f_i$. Note that knowing $f_1$ determines $f_0$ and $f_2$; and knowing $f_2$ determines $f_3$. The relations imply that $s^{d_2} t $ divides $f_1$, otherwise $f_1$ is free. All other $f_i$ are free. Hence, the fiber has dimension $$(e+1)(n+1) -3e-d_2-5.$$ 

It remains to find a single example of a nondegenerate unramified map and these relations.  Let $q$ and $r$ be defined by $2e-2-d_1-d_2 = q(n-3) + r$.  Consider the following sequence
\[ 1, \ d_1-1, \ d_2-d_1+1, \ x_1, \ d_2-d_1+1, \ x_2, \ d_2-d_1+1, \ x_3,  \ d_2-d_1+1, \ \dots \]
where $x_1 = \dots = x_{\lfloor \frac{r}{2} \rfloor} = q-d_2+d_1$ and $x_{\lfloor \frac{r}{2} \rfloor+1} = \dots = q-d_2+d_1-1$.  Then by Corollary \ref{cor-examples}, there exists a curve of the required form having the required relations.
\end{proof}

\bibliographystyle{plain}

\end{document}